\newtheorem{thm}{Theorem}[section]
\newtheorem{df}[thm]{Definition}
\newtheorem{lem}[thm]{Lemma}
\newtheorem{ob}[thm]{Observation}
\newtheorem{cor}[thm]{Corollary}
\newtheorem{pro}[thm]{Proposition}
\newenvironment {proof} {\noindent{\em Proof.}}{\hspace*{\fill}$\Box$\par\vspace{4mm}}
\newcommand{\ml}{l\kern-0.55mm\char39\kern-0.3mm}
\title{\textbf{ Strong conflict-free connection of
graphs\footnote{Supported by NSFC No.11871034, 11531011 and NSFQH No.2017-ZJ-790.}}}
\author{{\small Meng Ji$^1$, Xueliang Li$^{1,2}$} \\
{\small  $^1$Center for Combinatorics and LPMC}\\
{\small Nankai University, Tianjin 300071, China}\\
{\small jimengecho@163.com,}
{\small lxl@nankai.edu.cn }\\
\small $^2$School of Mathematics and Statistics, Qinghai Normal University\\
\small Xining, Qinghai 810008, China\\
}
\date{}
\begin{document}
\maketitle
\begin{abstract}
A path $P$ in an edge-colored graph is called \emph{a conflict-free path} if there exists a color used on only one of the edges of $P$. An edge-colored graph $G$ is called \emph{conflict-free connected} if for each pair of distinct vertices of $G$ there is a conflict-free path in $G$ connecting them. The graph $G$ is called \emph{strongly conflict-free connected }if for every pair of vertices $u$ and $v$ of $G$ there exists a conflict-free path of length $d_G(u,v)$ in $G$ connecting them. For a connected graph $G$, the \emph{strong conflict-free connection number} of $G$, denoted by $\mathit{scfc}(G)$, is defined as the smallest number of colors that are required in order to make $G$ strongly conflict-free connected. In this paper, we first show that if $G_t$ is a connected graph with $m$ $(m\geq 2)$ edges and $t$ edge-disjoint triangles, then $\mathit{scfc}(G_t)\leq m-2t$, and the equality holds if and only if $G_t\cong S_{m,t}$. Then we characterize the graphs $G$ with $scfc(G)=k$ for $k\in \{1,m-3,m-2,m-1,m\}$. In the end, we present a complete characterization for the cubic graphs $G$ with $scfc(G)=2$.
\\[2mm]
\textbf{Keywords:} strong conflict-free connection coloring (number); characterization; cubic graph\\
\textbf{AMS subject classification 2010:} 05C15, 05C40, 05C75.\\
\end{abstract}

\section{Introduction}

All graphs mentioned in this paper are simple, undirected and finite. We follow book \cite{BM} for undefined notation and terminology. Coloring problems are important topics in graph theory. In recent years, there have appeared a number of colorings raising great concern due to their wide applications in real world. We list a few well-known colorings here. The first of such would be the rainbow connection coloring, which is stated as follows. A path in an edge-colored graph is called a {\it rainbow path} if all the edges of the path have distinct colors. An edge-colored graph is called ($strongly$) rainbow connected if there is a ($shortest$ and) rainbow path between every pair of distinct vertices in the graph. For a connected graph $G$, the ($strong$) rainbow connection number of $G$ is defined as the smallest number of colors needed to make $G$ ($strongly$) rainbow connected, denoted by ($\mathit{src}(G)$) $\mathit{rc}(G)$. These concepts were first introduced by Chartrand et al. in \cite{CJMZ}.

Inspired by the rainbow connection coloring, the concept of proper connection coloring was independently posed by Andrews et al. in \cite{ALLZ} and Borozan et al. in \cite{BFGMMMT}, the only difference from ($strong$) rainbow connection coloring is that distinct colors are only required for adjacent edges instead of all edges on the ($shortest$) path. For an edge-colored connected graph $G$, the smallest number of colors required to give $G$ a ($strong$) proper connection coloring is called the \emph{(strong) proper connection number} of $G$, denoted by  $(\mathit{spc}(G))$ $\mathit{pc}(G)$.

The hypergraph version of conflict-free coloring was first introduced
by Even et al. in \cite{ELR}. A hypergraph $H$ is a pair $H=(X,E)$ where $X$ is the set of vertices, and $E$ is the set of nonempty subsets of $X$, called hyperedges. The coloring was motivated to solve the problem of assigning frequencies to different base stations in cellular networks, which is defined as a vertex-coloring of $H$ such that every hyperedge contains a vertex with a unique color.

Later on, Czap et al. in \cite{CJV} introduced the concept of \emph{conflict-free connection coloring} of graphs, motivated by the earlier hypergraph version. A path in an edge-colored graph $G$ is called a \emph{conflict-free path} if there is a color appearing only once on the path. The graph $G$ is called \emph{conflict-free connected} if there is a conflict-free path between each pair of distinct vertices of $G$. For a connected graph $G$, the minimum number of colors required to make $G$ conflict-free connected is defined as the \emph{conflict-free connection number} of $G$, denoted by $\mathit{cfc}(G)$. For more results, the reader can be referred to \cite{CHLMZ,CJLZ,CJMZ,LZZMZJ}.

In this paper, we focus on studying the strong conflict-free connection coloring which was introduced by Ji et al. in \cite{JLZ}, where only computational complexity was studied. An edge-colored graph is called \emph{strongly conflict-free connected }if there exists a conflict-free path of length $d_G(u,v)$ for every pair of vertices $u$ and $v$ of $G$. For a connected graph $G$, the \emph{strong conflict-free connection number} of $G$, denoted $\mathit{scfc}(G)$, is the smallest number of colors that are required to make $G$ strongly conflict-free connected.

The paper is organized as follows. In Section \ref{preliminaries}, we give some preliminary results. In Section \ref{The bound}, we show that if $G_t$ is a connected graph with $m$ $(m\geq 2)$ edges and $t$ edge-disjoint triangles, then $\mathit{scfc}(G_t)\leq m-2t$, and the equality holds if and only if $G_t\cong S_{m,t}$. In Section \ref{Large and small}, we characterize the graphs $G$ with $scfc(G)=k$ for $k\in\{1,m-3,m-2,m-1,m\}$. In the last section, we completely characterize the cubic graphs $G$ with $scfc(G)=2$.

\section{Preliminaries}\label{preliminaries}

In this section, we present some results which will be used in the sequel. In \cite{JLZ}, the authors obtained the following computational complexity result.

\begin{thm}{\upshape\cite{JLZ}}
For a connected graph $G$ and integer $k\geq 2$, deciding whether $\mathit{scfc}(G)\leq k$ is NP-complete.
\end{thm}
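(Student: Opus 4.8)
The plan is to establish the two halves of NP-completeness separately, for each fixed $k \geq 2$.

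For membership in NP, I would use as a certificate an edge-coloring $c$ of $G$ with at most $k$ colors \emph{together with} a specified $u$--$v$ path $P_{uv}$ for each of the $\binom{n}{2}$ pairs of vertices. Verification is then clearly polynomial: first run BFS from every vertex to obtain all distances $d_G(u,v)$, then for each pair check that $P_{uv}$ has length exactly $d_G(u,v)$ and that some color appears on exactly one edge of $P_{uv}$. The coloring witnesses $scfc(G)\le k$ precisely when all $\binom{n}{2}$ checks pass. Packaging the paths into the certificate is what makes this clean, since it sidesteps the separate question of whether a conflict-free shortest path can be found efficiently for a given coloring.

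For hardness I would reduce from the conflict-free coloring problem for hypergraphs (known to be NP-complete), which is the natural source because a \emph{conflict-free path} mirrors exactly a \emph{conflict-free hyperedge}: given a hypergraph $H=(X,\mathcal{E})$, one asks whether $X$ admits a coloring with at most $k$ colors in which every hyperedge contains a vertex of unique color inside that hyperedge. From $H$ I would build a graph $G_H$ in which (i) a distinguished set of edges $\{e_x : x\in X\}$ is in bijection with the vertices of $H$, and (ii) for each hyperedge $S\in\mathcal{E}$ there is a pair of terminals $(a_S,b_S)$ joined by a \emph{unique} shortest path $P_S$ whose edge set is exactly $\{e_x : x\in S\}$, enforced by subdividing connectors and attaching long pendant detours that kill every alternative route. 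A $k$-edge-coloring of $G_H$ then restricts to a $k$-coloring of $X$; the forced shortest path $P_S$ is conflict-free exactly when some color occurs once among $\{e_x:x\in S\}$, i.e. exactly when $S$ is a conflict-free hyperedge. Together with the requirement that all non-terminal pairs are handled automatically (arranged by giving connector and padding edges private colors, so their short paths always contain a uniquely colored edge), this gives $scfc(G_H)\le k$ if and only if $H$ has a conflict-free $k$-coloring, and the construction is plainly polynomial. If one prefers a reduction from a syntactically simpler problem, the same gadget machinery can instead encode a $3$-SAT instance, with variable-gadget colorings playing the role of truth assignments and clause terminals playing the role of hyperedges.

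The main obstacle is the gadget of item (ii): I must force a prescribed edge set to be the unique shortest path between two terminals while simultaneously (a) introducing no shorter alternative route elsewhere in $G_H$, (b) keeping each $e_x$ genuinely \emph{shared} across all hyperedges containing $x$, so that its color is one global choice, and (c) ensuring none of the many auxiliary terminal pairs created by the gadgets ever forces an extra color. Balancing (a)--(c) — in particular making the terminal-to-path distances uniform so that the unique shortest $a_S$--$b_S$ path really is the intended $P_S$, and no two distinguished edges are accidentally separated or merged — is where I expect the delicate case analysis to concentrate. Finally, I would obtain the full range $k\ge 2$ uniformly by appending a fixed palette of privately colored buffer edges, so that the reduction for a given $k$ follows the same template without a separate argument for each value.
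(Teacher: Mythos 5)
This theorem is quoted from \cite{JLZ}; the present paper contains no proof of it, so there is nothing internal to compare against, and your proposal has to stand on its own. The NP-membership half is fine: packaging the coloring together with one designated shortest path per vertex pair gives a polynomially verifiable certificate and correctly sidesteps the (possibly nontrivial) question of searching for a conflict-free shortest path under a given coloring.

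The hardness half, however, has a genuine gap, and it is exactly the one you flag yourself in item (ii) and then do not close. Your reduction needs, for every hyperedge $S$, a terminal pair $(a_S,b_S)$ whose \emph{unique} shortest path has edge set exactly $\{e_x : x\in S\}$, with each $e_x$ shared by all hyperedges containing $x$. This is a realizability requirement on the hypergraph, not a gadget you can always build: a path imposes a linear order on $S$, a vertex $x$ lying in many hyperedges forces all the corresponding paths through the two endpoints of the single edge $e_x$, and two hyperedges meeting in two or more vertices force their paths to share two or more edges, which immediately creates alternative routes and shortcuts between different terminal pairs that destroy uniqueness of the shortest paths. So the construction cannot be carried out for an arbitrary instance of hypergraph conflict-free coloring; you would have to restrict to a hypergraph class that is simultaneously path-realizable in this sense and still NP-hard for conflict-free $k$-coloring, and establishing that is essentially the whole content of the theorem. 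The alternative you mention (encoding 3-SAT directly) is only named, with no variable or clause gadget exhibited, and your ``automatic'' handling of the auxiliary pairs via privately colored padding is asserted rather than checked --- note that in the strong version every pair of vertices, including pairs of internal gadget vertices whose shortest path runs entirely through shared edges $e_x$, must admit a conflict-free \emph{shortest} path, and private colors on padding edges do not help those pairs. A cleaner route for the base case $k=2$ is suggested by Theorem~\ref{rc-cfc} of this paper: for diameter-$2$ graphs, $\mathit{scfc}(G)=2$ is equivalent to $\mathit{rc}(G)=2$, whose NP-hardness is known; but as written your argument does not constitute a proof.
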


They also showed the following result.
\begin{thm}\upshape\label{rc-cfc}\cite{JLZ}
For a graph $G$, $\mathit{rc}(G) = 2$ if and only if $diam(G) = 2$  and $\mathit{scfc}(G) = 2$.
\end{thm}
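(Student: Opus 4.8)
The plan is to prove the two implications separately, and the crucial observation throughout is that, for a graph of diameter $2$, a path using only two colors can be rainbow only if it has length at most $2$. Before that I would record two elementary facts. First, $\mathit{rc}(G)=1$ (resp.\ $\mathit{scfc}(G)=1$) holds precisely when $G$ is complete, so both parameters are at least $2$ whenever $diam(G)\geq 2$. Second, the standard inequality $\mathit{rc}(G)\geq diam(G)$ holds, since a rainbow path joining two vertices at distance $diam(G)$ has all its edges differently colored and hence uses at least $diam(G)$ colors.

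For the forward direction, suppose $\mathit{rc}(G)=2$. Then $G$ is not complete, so $diam(G)\geq 2$, while $\mathit{rc}(G)\geq diam(G)$ forces $diam(G)\leq 2$; hence $diam(G)=2$. Now fix a rainbow $2$-coloring of $G$. I claim it is already a strong conflict-free coloring. Indeed, for adjacent $u,v$ the single edge $uv$ is trivially a conflict-free path of length $d_G(u,v)=1$. For $u,v$ with $d_G(u,v)=2$, the guaranteed rainbow $u$--$v$ path uses distinct colors on all its edges, but only two colors are available, so it has length at most $2$; since $d_G(u,v)=2$ it has length exactly $2$, and a length-$2$ path whose two edges receive different colors is a conflict-free path of length $d_G(u,v)$. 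Thus $\mathit{scfc}(G)\leq 2$, and combined with $\mathit{scfc}(G)\geq 2$ this gives $\mathit{scfc}(G)=2$.

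For the backward direction, assume $diam(G)=2$ and $\mathit{scfc}(G)=2$. Fix a strong conflict-free $2$-coloring. For adjacent $u,v$ the edge $uv$ is a rainbow path; for $u,v$ with $d_G(u,v)=2$, the guaranteed conflict-free path of length $2$ has exactly two edges and, being conflict-free, must have these two edges colored differently, i.e.\ it is rainbow. Hence every pair of vertices is joined by a rainbow path, so $\mathit{rc}(G)\leq 2$; since $diam(G)=2$ implies $G$ is not complete, $\mathit{rc}(G)\geq 2$, and therefore $\mathit{rc}(G)=2$.

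The argument is short because the two notions collapse onto the same combinatorial object — a length-$2$ path bearing two distinct colors — once the number of colors is pinned to $2$ and the diameter to $2$. The only point that needs care, which I regard as the main (and rather minor) obstacle, is the uniform handling of the length constraint: that with exactly two colors a rainbow path cannot exceed length $2$, so for distance-$2$ pairs such a path is automatically a shortest one, and conversely a conflict-free shortest path of length $2$ is automatically rainbow. Everything else reduces to the elementary facts $\mathit{rc}(G),\mathit{scfc}(G)\geq 2$ for non-complete $G$ and $\mathit{rc}(G)\geq diam(G)$.
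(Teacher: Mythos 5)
Your proof is correct. Note that the paper itself gives no proof of this statement --- it is quoted from reference [JLZ] --- but the pivotal observation you isolate (with exactly two colors, a path of length $2$ is rainbow if and only if it is conflict-free, and a rainbow path cannot be longer than $2$, so for a distance-$2$ pair it is automatically shortest) is precisely the remark the paper invokes when it derives $\mathit{scfc}(W_n)=\mathit{src}(W_n)$ in Theorem 2.6. Your argument is complete and self-contained: both inequalities $\mathit{rc}(G),\mathit{scfc}(G)\geq 2$ for non-complete $G$ and the bound $\mathit{rc}(G)\geq diam(G)$ are standard, and the case analysis over adjacent versus distance-$2$ pairs covers everything since $diam(G)=2$.
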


Note that from \cite{CJMZ}, one has that $rc(G) = 2$ if and only if $src(G) = 2$.
The following result is obvious.
\begin{thm}\label{scfc tree}
For a tree $T$, $\mathit{scfc}(T)=\mathit{cfc}(T)$. Therefore, for a path $P_n$ on $n$
vertices, $\mathit{scfc}(P_n)=\lceil\log_2n\rceil$; for a star $S_m$ with $m$ edges, $\mathit{scfc}(S_m)=m$.
\end{thm}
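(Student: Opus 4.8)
The plan is to reduce everything to the single structural fact that a tree contains a unique path between any two of its vertices.

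First I would establish the identity $\mathit{scfc}(T)=\mathit{cfc}(T)$ directly from the definitions. In a tree $T$, for every pair of distinct vertices $u,v$ there is exactly one $u$--$v$ path, and since it is the only path joining them its length is precisely $d_T(u,v)$. Consequently, for any fixed edge-coloring of $T$, the assertion ``there is a conflict-free $u$--$v$ path'' and the assertion ``there is a conflict-free $u$--$v$ path of length $d_T(u,v)$'' refer to the very same object, namely the unique $u$--$v$ path. Hence a coloring makes $T$ conflict-free connected if and only if it makes $T$ strongly conflict-free connected, and the two parameters, being the minimum number of colors taken over the same family of admissible colorings, must coincide. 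This gives $\mathit{scfc}(T)=\mathit{cfc}(T)$, and it is the only place where the distinction between the two notions actually has to be unwound.

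With this in hand, the two numerical values follow by computing $\mathit{cfc}$ on the respective trees. For the path $P_n$ I would use a divide-and-conquer argument on the number of vertices. For the upper bound, I would color a most central edge with a brand-new color and recursively color the two resulting subpaths, each on at most $\lceil n/2\rceil$ vertices; any subpath crossing the central edge is conflict-free because that edge carries a globally unique color, while any subpath contained in one side is handled by the inductive coloring. This yields $\mathit{cfc}(P_n)\le 1+\mathit{cfc}(P_{\lceil n/2\rceil})$. For the matching lower bound, I would note that in any valid coloring the whole path must itself be conflict-free, so some edge $e$ carries a color appearing only once on $P_n$; deleting $e$ splits $P_n$ into two subpaths, neither of which uses the color of $e$, and the longer of them has at least $\lceil n/2\rceil$ vertices while being validly colored with the remaining colors, so $\mathit{cfc}(P_n)\ge 1+\mathit{cfc}(P_{\lceil n/2\rceil})$. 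Together with the base case $\mathit{cfc}(P_2)=1$, this recursion solves to $\mathit{cfc}(P_n)=\lceil\log_2 n\rceil$.

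For the star $S_m$ the computation is immediate: any two leaves are at distance $2$, joined by the two-edge path through the center, and such a two-edge path is conflict-free exactly when its two edges receive distinct colors. Running over all pairs of leaves forces the $m$ pendant edges to be colored pairwise differently, so $\mathit{cfc}(S_m)=m$. Combining these computations with the identity of the first paragraph gives $\mathit{scfc}(P_n)=\lceil\log_2 n\rceil$ and $\mathit{scfc}(S_m)=m$. I expect the only genuinely substantive step to be the lower bound for the path, which needs the observation that a uniquely-colored edge isolates the two halves; everything else is a direct consequence of the uniqueness of paths in a tree, which is exactly why the statement is flagged as obvious.
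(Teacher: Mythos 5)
Your proposal is correct, and it follows exactly the route the paper intends: the paper offers no proof at all (it declares the result ``obvious''), and the intended justification is precisely your observation that the unique $u$--$v$ path in a tree has length $d_T(u,v)$, so the strong and non-strong notions coincide, after which one invokes the known values $\mathit{cfc}(P_n)=\lceil\log_2 n\rceil$ and $\mathit{cfc}(S_m)=m$ from the conflict-free connection literature. Your only addition is a self-contained divide-and-conquer derivation of $\mathit{cfc}(P_n)=\lceil\log_2 n\rceil$ (upper bound by a fresh color on a central edge, lower bound by deleting a uniquely colored edge and recursing on the longer half), which is sound and matches the standard proof the paper implicitly imports.
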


The authors in \upshape \cite{CJMZ} obtained the strong rainbow connection number for a wheel graph $W_n$, where $n$ is the degree of the central vertex.
\begin{thm}{\upshape \cite{CJMZ}}\label{rainbowconnection}
For $n\geq3$, let $W_n$ be a wheel. Then $\mathit{src}(W_n)=\lceil\frac{n}{3}\rceil$.
\end{thm}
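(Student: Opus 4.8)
The plan is to prove the two inequalities $src(W_n)\ge\lceil n/3\rceil$ and $src(W_n)\le\lceil n/3\rceil$ separately. Throughout I would write the hub as $v$ and the rim as the cycle $v_1v_2\cdots v_nv_1$, so that the edges of $W_n$ are the spokes $vv_i$ and the rim edges $v_iv_{i+1}$ (indices mod $n$). For $n\ge 4$ one has $diam(W_n)=2$, hence in a strong rainbow coloring every geodesic has length at most $2$, and such a path is rainbow exactly when its (at most two) edges get distinct colors. The only nontrivial pairs are therefore the non-adjacent rim pairs, each at distance $2$.

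For the lower bound, the key observation is that if two rim vertices $v_i,v_j$ are at rim-distance at least $3$, then their unique geodesic in $W_n$ is $v_i\,v\,v_j$, which forces the spokes $vv_i$ and $vv_j$ to receive different colors. Thus each color class among the spokes is a set $S$ of rim vertices that are pairwise within rim-distance $2$. I would then show that for $n\ge 7$ any such $S$ has $|S|\le 3$: if some two elements of $S$ are at rim-distance exactly $2$, intersecting their two rim-balls of radius $2$ confines $S$ to three consecutive vertices, and if all pairs are at distance $1$ then $|S|\le 2$. Consequently the $n$ spokes need at least $\lceil n/3\rceil$ colors. The small cases $n\in\{3,4,5,6\}$ are checked directly: $W_3=K_4$ has diameter $1$ and needs one color, while for $n\in\{4,5,6\}$ the mere presence of a distance-$2$ pair already forces at least $2=\lceil n/3\rceil$ colors (a single color makes no length-$2$ path rainbow).

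For the upper bound I would exhibit a coloring with $k=\lceil n/3\rceil$ colors. Partition the rim into consecutive blocks $A_1,\dots,A_k$ of three vertices each (the last block possibly shorter), and color every spoke incident with $A_i$ by color $i$. Any two rim vertices at rim-distance $\ge 3$ then lie in different blocks, so their spokes differ and the hub geodesic is rainbow. The only same-color spoke pairs at distance $2$ are the endpoints $v_{3i-2},v_{3i}$ of a full block; for these the hub path fails, so I repair them through the length-$2$ rim path $v_{3i-2}v_{3i-1}v_{3i}$, coloring its two edges with two distinct colors from $\{1,\dots,k\}$, which is possible since $k\ge 2$ whenever $n\ge 4$. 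All remaining rim edges may be colored arbitrarily, since every vertex pair now has a rainbow shortest path; this uses exactly $\lceil n/3\rceil$ colors.

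The main obstacle I anticipate is the lower bound, specifically the clean argument that a monochromatic spoke class spans at most three rim vertices, together with the need to treat the small wheels $n\le 6$ by hand, where the ``distance $\ge 3$'' argument is vacuous or too weak. The upper bound should be comparatively routine, amounting to the block construction above plus a short verification that every pair of vertices is served by a rainbow geodesic.
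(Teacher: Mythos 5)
This theorem is quoted from \cite{CJMZ} and the paper supplies no proof of its own, so there is nothing internal to compare against. Your argument is correct and complete: the key lower-bound observation (rim vertices at rim-distance at least $3$ have the hub path as their unique geodesic, so a monochromatic spoke class spans at most three consecutive rim vertices for $n\geq 7$), the separate treatment of $n\leq 6$, and the block coloring with the rim-edge repair for same-block distance-$2$ pairs together give a valid proof, and it is essentially the standard argument from Chartrand et al.
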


For a complete bipartite graph $K_{s,t}$,
they also got the following result.

\begin{thm}\upshape\cite{CJMZ}\label{K-s-t}
For integers $s$ and $t$ with $1\leq s\leq t$, $\mathit{src}(K_{s,t})=\lceil\sqrt[s]{t}\rceil$.
\end{thm}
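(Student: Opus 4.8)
The plan is to translate the strong rainbow condition on $K_{s,t}$ into a purely combinatorial statement about a matrix of colors, from which both bounds fall out. Write the two parts of $K_{s,t}$ as $A=\{a_1,\dots,a_s\}$ and $B=\{b_1,\dots,b_t\}$, and let $[k]=\{1,\dots,k\}$ be the color set. First I would record the distances: every vertex of $A$ is adjacent to every vertex of $B$, so cross pairs have distance $1$ and their single-edge geodesics are automatically rainbow; the only pairs imposing a constraint are those inside $A$ or inside $B$, and each such distinct pair is at distance $2$. A geodesic joining $b_l,b_{l'}\in B$ has the form $b_l\,a_i\,b_{l'}$ and is rainbow exactly when $c(a_ib_l)\neq c(a_ib_{l'})$; symmetrically a geodesic joining $a_i,a_j\in A$ is $a_i\,b_l\,a_j$ and is rainbow exactly when $c(a_ib_l)\neq c(a_jb_l)$.

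Encoding the coloring as a $t\times s$ matrix $M$ with $M_{l,i}=c(a_ib_l)$ over the alphabet $[k]$, these two requirements become transparent: $K_{s,t}$ is strongly rainbow connected if and only if (i) all rows of $M$ are pairwise distinct, so that every pair in $B$ admits a rainbow geodesic, and (ii) all columns of $M$ are pairwise distinct, so that every pair in $A$ does. The lower bound is then immediate: condition (i) asks for $t$ distinct vectors in $[k]^s$, forcing $k^s\geq t$, i.e. $k\geq\lceil\sqrt[s]{t}\,\rceil$.

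For the matching upper bound I would exhibit, with $k=\lceil\sqrt[s]{t}\,\rceil$ colors, a $t\times s$ matrix over $[k]$ satisfying (i) and (ii) at once; this simultaneity is the single point needing care, since each condition on its own is trivial. Assuming $s\geq 2$ and $t\geq 2$ (so that $k\geq 2$ and $k^s\geq t\geq s$), I would place the $s$ standard unit vectors $e_1,\dots,e_s\in[k]^s$ as the top $s$ rows; read columnwise these already make all $s$ columns pairwise distinct, securing (ii). I would then fill the remaining $t-s$ rows with any vectors of $[k]^s$ not yet used, which is possible exactly because $k^s\geq t$; this makes all $t$ rows distinct, securing (i), while adding rows can never merge columns that were already distinct in the first $s$ rows. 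Hence $\lceil\sqrt[s]{t}\,\rceil$ colors suffice.

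Finally I would dispose of the degenerate cases to close the equality. When $s=1$ the graph is the star $K_{1,t}$, condition (ii) is vacuous, and (i) forces $t$ distinct single colors, giving $\mathit{src}(K_{1,t})=t=\lceil\sqrt[1]{t}\,\rceil$; the case $s=t=1$ is a single edge with $\mathit{src}=1$. The main obstacle, as noted, is arranging rows and columns to be distinct simultaneously using only $\lceil\sqrt[s]{t}\,\rceil$ colors, and the identity-plus-padding construction is what resolves it; the short but crucial verification is that appending rows preserves column-distinctness.
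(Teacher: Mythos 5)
The paper does not actually prove this statement: Theorem~\ref{K-s-t} is quoted from \cite{CJMZ} and used only as a black box to deduce Theorem~\ref{scfc(K-s-t)} via $diam(K_{s,t})=2$, so there is no in-paper proof to compare against. Judged on its own, your argument is correct and complete. The reduction to a $t\times s$ color matrix with pairwise distinct rows and pairwise distinct columns is exactly the right invariant: a geodesic $b_l a_i b_{l'}$ is rainbow iff rows $l$ and $l'$ differ in coordinate $i$, so a rainbow geodesic exists iff the rows differ somewhere, and dually for pairs in $A$; this is essentially the ``color code'' argument of the original source. The lower bound $k^s\geq t$ and the identity-block-plus-padding construction for the upper bound both check out: with $k=\lceil\sqrt[s]{t}\,\rceil\geq 2$ (when $t\geq 2$) the top $s\times s$ block already separates all $s$ columns, appending further rows cannot merge columns that already differ, $k^s\geq t$ supplies enough distinct vectors for the remaining $t-s$ rows, and the degenerate cases $s=1$ and $s=t=1$ are disposed of separately. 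The one blemish is notational rather than mathematical: the ``standard unit vectors'' $e_1,\dots,e_s$ are not literally elements of $[k]^s$ if $[k]=\{1,\dots,k\}$; you should say explicitly that row $i$ carries color $2$ in position $i$ and color $1$ elsewhere (legitimate since $k\geq 2$), which is precisely what makes the columns of the top block pairwise distinct.
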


From the above results, we get that

\begin{thm}
$\mathit{scfc}(W_n)=\lceil\frac{n}{3}\rceil$.
\end{thm}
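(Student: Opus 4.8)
The plan is to reduce the computation of $\mathit{scfc}(W_n)$ entirely to the already-known strong rainbow connection number via a diameter argument. The essential observation is that $W_n$ has diameter at most $2$: the hub is adjacent to every rim vertex, and any two rim vertices that are not consecutive on the cycle are joined by a path of length $2$ through the hub (when $n=3$ we have $W_3\cong K_4$, of diameter $1$). Since a strong connection coloring only ever concerns \emph{shortest} paths, and here every shortest path has length $1$ or $2$, I would first isolate this as a general principle and then invoke Theorem \ref{rainbowconnection}.

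The key step is the following claim, which I would state and prove as a short lemma: for any connected graph $G$ with $\mathrm{diam}(G)\le 2$, an edge-coloring is a strong conflict-free connection coloring if and only if it is a strong rainbow connection coloring, and hence $\mathit{scfc}(G)=\mathit{src}(G)$. To prove the claim I would check the two path-lengths that can occur. For a pair $u,v$ at distance $1$ the single edge $uv$ is simultaneously a shortest rainbow path and a shortest conflict-free path, with no constraint on the coloring. For a pair $u,v$ at distance $2$, a shortest path is a length-$2$ path $u\,w\,v$ with edges colored $a$ and $b$; such a path is rainbow exactly when $a\neq b$, and it is conflict-free exactly when some color appears once, which (for only two edges) again happens exactly when $a\neq b$. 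Thus on paths of length at most $2$ the rainbow and conflict-free conditions coincide edge-for-edge, so the family of admissible colorings is identical and the two optimal values agree.

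With the lemma in hand the theorem follows immediately: $W_n$ satisfies $\mathrm{diam}(W_n)\le 2$, so $\mathit{scfc}(W_n)=\mathit{src}(W_n)$, and Theorem \ref{rainbowconnection} gives $\mathit{src}(W_n)=\lceil n/3\rceil$, whence $\mathit{scfc}(W_n)=\lceil n/3\rceil$. I would also remark that the degenerate case $n=3$ is consistent, since $W_3\cong K_4$ has $\mathit{scfc}=\mathit{src}=1=\lceil 3/3\rceil$.

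I do not expect a serious obstacle here, since the crux is the elementary equivalence of the rainbow and conflict-free conditions on paths of length at most $2$. The only points demanding care are verifying the diameter bound (so that no shortest path of length $\ge 3$ appears) and emphasizing that this equivalence is genuinely special to $\mathrm{diam}(G)\le 2$: on a length-$3$ shortest path a coloring such as $1,1,2$ is conflict-free but not rainbow, so the reduction would break for larger diameter. Since $W_n$ stays within diameter $2$, this subtlety never arises and the argument goes through cleanly.
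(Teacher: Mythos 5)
Your proposal is correct and follows essentially the same route as the paper: both reduce $\mathit{scfc}(W_n)$ to $\mathit{src}(W_n)$ by observing that $\mathrm{diam}(W_n)\le 2$ and that on paths of length at most $2$ the rainbow and conflict-free conditions coincide, then invoke Theorem \ref{rainbowconnection}. Your write-up merely makes the equivalence of admissible colorings and the degenerate case $n=3$ more explicit than the paper does.
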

\begin{proof} Note that for a graph $G$ with diameter 2, a strong rainbow path (of length 2) of $G$ is a strong conflict-free path of $G$, and vice versa.
Since $diam(W_n)=2$, then $\mathit{scfc}(W_n)=\mathit{src}(W_n)$. So, $\mathit{scfc}(W_n)=\lceil\frac{n}{3}\rceil$ from Theorem \ref{rainbowconnection}.
\end{proof}

\begin{thm}\label{scfc(K-s-t)}
For integers $s$ and $t$ with $1\leq s\leq t$, $\mathit{scfc}(K_{s,t})=\lceil\sqrt[s]{t}\rceil$.
\end{thm}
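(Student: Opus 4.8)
The plan is to reduce this statement to the strong rainbow result in Theorem \ref{K-s-t} by exploiting exactly the same diameter-$2$ observation that powered the proof of the wheel formula above. Recall that a path of length at most $2$ is rainbow precisely when it is conflict-free, since a two-edge path has a color appearing exactly once if and only if its two edges receive distinct colors. Hence, on a graph of diameter $2$, where every pair of vertices is joined by a shortest path of length at most $2$, a coloring is strongly rainbow connected if and only if it is strongly conflict-free connected. Consequently, for any graph $G$ with $diam(G)=2$ one has $\mathit{scfc}(G)=\mathit{src}(G)$.

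First I would compute the diameter of $K_{s,t}$. For $t\geq 2$, any two vertices lying in the same part are joined by a path of length $2$ through any vertex of the opposite part, while vertices in different parts are adjacent; hence no pair is farther than $2$ apart and some pair is at distance exactly $2$, so $diam(K_{s,t})=2$. Applying the equivalence above together with Theorem \ref{K-s-t} then yields $\mathit{scfc}(K_{s,t})=\mathit{src}(K_{s,t})=\lceil\sqrt[s]{t}\rceil$, as desired.

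The only thing left is the degenerate case $t=1$, which forces $s=1$, so that $K_{s,t}=K_{1,1}$ is a single edge of diameter $1$. Here one color trivially makes the graph strongly conflict-free connected, giving $\mathit{scfc}(K_{1,1})=1=\lceil\sqrt[1]{1}\rceil$, so the formula still holds. When $s=1$ and $t\geq 2$ the graph $K_{1,t}$ is the star $S_t$ of diameter $2$ and is already covered by the main argument; one may also cross-check consistency against Theorem \ref{scfc tree}, which gives $\mathit{scfc}(S_t)=t=\lceil\sqrt[1]{t}\rceil$.

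Since the value drops straight out of the known strong rainbow number, I do not expect a genuine obstacle; no separate lower-bound construction is needed, because both inequalities are automatic once the equality $\mathit{scfc}(K_{s,t})=\mathit{src}(K_{s,t})$ is established. The only points demanding care are verifying that the diameter is truly $2$ in every nondegenerate case, so that the rainbow/conflict-free equivalence for short paths can be invoked, and making sure the small case $K_{1,1}$ is not overlooked when checking the boundary $s=t=1$.
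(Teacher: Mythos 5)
Your proof is correct and follows essentially the same route as the paper: both reduce the claim to Theorem \ref{K-s-t} via the observation that on a diameter-$2$ graph a shortest path is rainbow if and only if it is conflict-free, so $\mathit{scfc}(K_{s,t})=\mathit{src}(K_{s,t})$. Your explicit treatment of the degenerate case $K_{1,1}$ (where the diameter is $1$, not $2$) is a small point of extra care that the paper's one-line proof glosses over.
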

\begin{proof}
Since $diam(K_{s,t})=2$, from Theorem \ref{K-s-t} we have that $\mathit{scfc}(K_{s,t})=\lceil\sqrt[s]{t}\rceil$.
\end{proof}

\begin{pro}\label{C_n}
Let $C_n$ be a cycle of order $n$ and let $P_n$ be a spanning subgraph of $C_n$. Then $\mathit{scfc}(C_n)\leq \mathit{scfc}(P_n)$.
\end{pro}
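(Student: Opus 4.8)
The plan is to exhibit one strong conflict-free coloring of $C_n$ that uses exactly $\mathit{scfc}(P_n)$ colors, which gives the inequality at once. Write $C_n=v_1v_2\cdots v_nv_1$ with edges $e_i=v_iv_{i+1}$ for $1\le i\le n-1$ and $e_n=v_nv_1$, so that the given spanning path is $P_n=C_n-e_n$ with edge set $\{e_1,\dots,e_{n-1}\}$. By Theorem \ref{scfc tree} we have $\mathit{scfc}(P_n)=\lceil\log_2 n\rceil=:k$, so the goal is a valid coloring of $C_n$ with $k$ colors.

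First I would fix a convenient optimal coloring of $P_n$, namely the standard recursive conflict-free coloring $c$ with colors $1,\dots,k$ in which the middle edge of every subpath receives the locally largest color. This coloring is optimal, makes every subpath of $P_n$ conflict-free (its locally maximum color occurs exactly once on it), and has the extra feature that the global top color $k$ is used exactly once on $P_n$, on the central edge $e_p$ with $p=\lfloor n/2\rfloor$. I would then extend $c$ to $C_n$ by coloring the missing edge $e_n$ with color $k$ as well.

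Next I would check that this coloring makes $C_n$ strongly conflict-free connected. For vertices $u,v$, a geodesic is an arc of length $d_{C_n}(u,v)\le\lfloor n/2\rfloor$. If this arc avoids $e_n$, it is a subpath of $P_n$ and is conflict-free by the choice of $c$. If it uses $e_n$, it has the form (prefix $e_1,\dots,e_b$) together with $e_n$ together with (suffix $e_{n-a},\dots,e_{n-1}$), where $a+b+1\le\lfloor n/2\rfloor$; then $b\le\lfloor n/2\rfloor-1<p$ and $n-a\ge\lceil n/2\rceil+1>p$, so the central edge $e_p$ lies outside the arc. Consequently color $k$ appears on this arc only at $e_n$, so the arc is conflict-free. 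Thus every pair has a conflict-free geodesic, giving $\mathit{scfc}(C_n)\le k=\mathit{scfc}(P_n)$.

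The hard part is precisely the second case: the geodesics of $C_n$ that run through the added edge $e_n$ splice a prefix and a suffix of $P_n$ together, and such an edge set is not a subpath of $P_n$, so the conflict-free property of $c$ does not transfer for free. The device that defeats this obstacle is placing the unique top color on the central edge and giving $e_n$ that same color: a short arc through $e_n$ reaches at most $\lfloor n/2\rfloor-1$ edges into either end of $P_n$ and hence cannot reach $e_p$, so it sees color $k$ exactly once. Finally I would verify the boundary situations to be safe, namely the arcs of length exactly $\lfloor n/2\rfloor$ when $n$ is even and the small cases $n=3,4$, confirming that the choice $p=\lfloor n/2\rfloor$ works for both parities.
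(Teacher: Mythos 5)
Your proof is correct and follows the same overall strategy as the paper: extend an optimal $\lceil\log_2 n\rceil$-coloring of the spanning path $P_n$ to $C_n$ by giving the closing edge a color that occurs only once on the path. The difference lies in which optimal coloring of $P_n$ is used. The paper takes the ruler coloring (edge $e_i$ gets color $x+1$ where $2^x$ is the largest power of $2$ dividing $i$), whose unique top color sits at edge $2^{k-1}$ --- possibly far from the centre --- and then splits into cases according to whether one or two colors occur exactly once, asserting without verification that the resulting coloring of $C_n$ works. You instead use the middle-edge-recursive coloring, which places the unique top color on the central edge $e_{\lfloor n/2\rfloor}$; this placement is precisely what makes the key case (a geodesic passing through the closing edge) immediate, since such an arc has at most $\lfloor n/2\rfloor$ edges and therefore cannot reach the centre of $P_n$ from either end, so it sees color $k$ only on the closing edge. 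Your version is the more complete argument: the paper's case distinction is left unjustified, whereas your choice of where to put the unique color reduces the hard case to a two-line inequality, and you also note the boundary checks for even $n$ and for $n=3,4$.
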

\begin{proof}
Let $P_n= e_1e_2\cdots e_{n-1}$ be a path with $n$ vertices and let $u$ and $v$ be the ends of $P_n$. We know that $\mathit{scfc}(P_n)=\lceil\log_2 n\rceil$ by Theorem \ref{scfc tree}. Now we first give a coloring for $P_n$: color the edge $e_i$ with color $x+1$, where $2^x$ is the largest power of 2 that divides $i$. One can see that $\lceil\log_2 n\rceil$ is the largest number in the coloring by Theorem \ref{scfc tree}. Clearly, the color $\lceil\log_2 n\rceil$ only occurs once. Thus, we color the edge $uv$ with $\lceil\log_2 n\rceil$ in $C_n$ if there is only one color occurring once; otherwise, we color the edge $uv$ with $\lceil\log_2 n\rceil-1$. Consequently, the coloring is a strong conflict-free connection coloring of $C_n$.
\end{proof}
\textbf{Remark}: The proposition does not hold for general graphs. Here is a counterexample. Let $G= C_6$ with the edge set $\{v_1v_2,v_2v_3,v_3v_4,v_4v_5,v_5v_6,v_6v_1\}$. So $\mathit{scfc}(G)=2$. Let $G'=C_6+ v_1v_3$. Then $\mathit{scfc}(G')=3$.

\begin{thm}\label{cycle}
If $C_{n}$ is a cycle with $n$ $(n\geq3)$ vertices, then
\begin{center}
$\mathit{scfc}(C_{n})=\lceil\log_2 n\rceil-1$ or $\lceil\log_2 n\rceil$.
\end{center}
\end{thm}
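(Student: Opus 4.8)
The plan is to sandwich $\mathit{scfc}(C_n)$ between $\lceil\log_2 n\rceil-1$ and $\lceil\log_2 n\rceil$. The upper bound is immediate: a Hamiltonian path $P_n$ is a spanning subgraph of $C_n$, so Proposition \ref{C_n} gives $\mathit{scfc}(C_n)\leq \mathit{scfc}(P_n)$, and $\mathit{scfc}(P_n)=\lceil\log_2 n\rceil$ by Theorem \ref{scfc tree}. Hence only the lower bound $\mathit{scfc}(C_n)\geq\lceil\log_2 n\rceil-1$ remains to be proved.

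For the lower bound I would first record which arcs of the cycle are forced to be conflict-free. If two vertices lie at cycle-distance $\ell<n/2$, then the shorter arc joining them is the \emph{unique} shortest path between them, so in any strong conflict-free coloring this arc must be conflict-free. (When $n$ is even and $\ell=n/2$ there are two shortest arcs and only one of them need be conflict-free; this is precisely the subtlety I want to sidestep.) I would therefore single out one arc $Q$ consisting of $\lceil n/2\rceil$ consecutive vertices of $C_n$. Every sub-arc of $Q$ spans at most $\lceil n/2\rceil$ vertices, hence has length at most $\lceil n/2\rceil-1<n/2$, so by the previous remark every sub-arc of $Q$ is a unique shortest path and must be conflict-free.

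Consequently, the restriction of any strong conflict-free coloring of $C_n$ to the edges of $Q$ makes every sub-path of $Q$ conflict-free; that is, it is a conflict-free coloring of the path on $\lceil n/2\rceil$ vertices. Since $Q$ is a tree, Theorem \ref{scfc tree} shows this needs at least $\mathit{cfc}(P_{\lceil n/2\rceil})=\lceil\log_2\lceil n/2\rceil\rceil$ colors, whence $\mathit{scfc}(C_n)\geq\lceil\log_2\lceil n/2\rceil\rceil$. It then remains to verify the elementary identity $\lceil\log_2\lceil n/2\rceil\rceil=\lceil\log_2 n\rceil-1$ for all $n\geq3$, which completes the lower bound.

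I expect the main (and only minor) obstacle to be the parity bookkeeping. One must check that $\lceil n/2\rceil$ is indeed the largest number of consecutive vertices all of whose connecting arcs are forced shortest paths — taking more vertices would, for even $n$, bring in the antipodal arc whose conflict-freeness is not guaranteed — and then confirm the halving identity, using that an odd $n\geq 3$ is never a power of $2$, so that $\lceil\log_2(n+1)\rceil=\lceil\log_2 n\rceil$ in the odd case. Finally, neither value is redundant: the examples $C_5$ and $C_6$ from the preceding discussion realize $\lceil\log_2 n\rceil$ and $\lceil\log_2 n\rceil-1$ respectively, so both possibilities genuinely occur.
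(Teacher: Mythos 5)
Your proposal is correct and follows essentially the same route as the paper: the upper bound via Proposition \ref{C_n}, and the lower bound by restricting the coloring to a long arc of the cycle all of whose sub-arcs are forced shortest paths, then invoking the path formula from Theorem \ref{scfc tree}. Your version is in fact slightly more careful than the paper's, which applies the bound $\lceil\log_2(\mathrm{diam}+1)\rceil$ directly without addressing the antipodal pair for even $n$; your choice of an arc on $\lceil n/2\rceil$ vertices sidesteps that subtlety cleanly, and your halving identity checks out.
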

\begin{proof}
By Proposition \ref{C_n} and Theorem \ref{scfc tree}, one can see that $\mathit{scfc}(C_{n})\leq \lceil\log_2 n\rceil$. It remains to handle with the lower bound. We first consider the case that $diam(C_n)=\frac{n}{2}$ for $n=2k$ $(k\in \mathbb{Z}^+)$. Hence, $\mathit{scfc}(C_n)\geq \lceil\log_2(\frac{n}{2}+1)\rceil=\lceil\log_2(n+2)\rceil-1\geq \lceil\log_2n\rceil-1$. We then consider the case that $diam(C_n)=\frac{n-1}{2}$ for $n=2k+1$ $(k\in \mathbb{Z}^+)$. Thus, $\mathit{scfc}(C_n)\geq \lceil\log_2(\frac{n-1}{2}+1)\rceil=\lceil\log_2(n+1)\rceil-1\geq \lceil\log_2n\rceil-1$. Consequently, $\mathit{scfc}(C_{n})=\lceil\log_2 n\rceil-1$ or $\lceil\log_2 n\rceil$.
\end{proof}

Theorem \ref{cycle} implies the following corollary.
\begin{cor}\label{onecycle}
Let $G$ be a connected graph with $m$ edges and let $C$ be a cycle in $G$. Then $\mathit{scfc}(G)\leq m-|C|+\lceil\log_2|C|\rceil$.
\end{cor}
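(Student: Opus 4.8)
The plan is to build an explicit strong conflict-free connection coloring of $G$ that uses exactly $m-|C|+\lceil\log_2|C|\rceil$ colors, which immediately yields the claimed upper bound. First I would color the cycle $C$, viewed as the standalone cycle $C_{|C|}$, by an optimal strong conflict-free connection coloring; by Theorem \ref{cycle} this uses at most $\lceil\log_2|C|\rceil$ colors. Then I would assign to each of the remaining $m-|C|$ edges of $G$ (those not lying on $C$) its own brand-new color, so that each such color is used on exactly one edge of the whole graph. The total number of colors is then $(m-|C|)+\lceil\log_2|C|\rceil$, as desired, so it only remains to verify that this coloring $c$ is indeed a strong conflict-free connection coloring.

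To do so, I would fix an arbitrary pair of distinct vertices $u,v$ and produce a conflict-free path of length $d_G(u,v)$, splitting into two cases according to whether some shortest $u$--$v$ path avoids the cycle. If there is a shortest $u$--$v$ path $P$ in $G$ that uses at least one edge $e\notin E(C)$, then $P$ is automatically conflict-free: the color $c(e)$ occurs on no other edge of $G$, hence appears exactly once along $P$. This case covers in particular every pair for which at least one of $u,v$ lies outside $V(C)$, since any path reaching such a vertex must use a non-cycle edge.

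The remaining, and main, case is when every shortest $u$--$v$ path of $G$ lies entirely inside $C$; here necessarily $u,v\in V(C)$. The key point to establish is that such a path is also a shortest path within the cycle itself. Indeed, if $P^{*}$ is a shortest $G$-path contained in $C$, then $P^{*}$ is a $u$--$v$ path in $C$ of length $d_G(u,v)$, so $d_C(u,v)\le d_G(u,v)$; combined with the trivial inequality $d_G(u,v)\le d_C(u,v)$ this gives $d_C(u,v)=d_G(u,v)$. Now, since the restriction of $c$ to $E(C)$ is a strong conflict-free connection coloring of $C$, there is a conflict-free $u$--$v$ path $Q$ inside $C$ of length $d_C(u,v)=d_G(u,v)$. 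As $Q$ uses only cycle edges, it remains conflict-free under the full coloring $c$ and has length $d_G(u,v)$, which finishes this case.

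I expect the second case to be the only real obstacle, and within it the crucial step is the distance identity $d_C(u,v)=d_G(u,v)$, which is exactly what lets the cycle's own strong conflict-free coloring be transplanted into $G$ without loss. Everything else is bookkeeping: counting the colors and observing that a globally unique color on any single edge trivially makes every path through that edge conflict-free.
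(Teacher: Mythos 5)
Your proposal uses exactly the same coloring as the paper's proof (an optimal strong conflict-free coloring of $C$ via Theorem \ref{cycle}, plus a fresh color on each of the remaining $m-|C|$ edges), and your case analysis simply supplies the verification that the paper leaves to the reader with ``we can verify.'' The argument is correct, including the key observation that $d_C(u,v)=d_G(u,v)$ whenever every shortest $u$--$v$ path lies inside $C$.
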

\begin{proof}
By Theorem \ref{cycle}, $\mathit{scfc}(C)\leq \lceil\log_2|C|\rceil$. If we color the edges of $C$ with $\lceil\log_2|C|\rceil$ colors to make $C$ strongly conflict-free connected, and color each of the remaining $m-|C|$ edges with a fresh color, then we can verify that $G$ is strongly conflict-free connected. Consequently, $\mathit{scfc}(G)\leq m-|C|+\lceil\log_2|C|\rceil$.
\end{proof}

A graph $G$ is called $k$-$\mathit{cfc}$-$critical$ if $\mathit{cfc}(G)=k$ and for any proper subgraph $G'$ of $G$, $cfc(G')<k$.

\begin{thm}\label{Q_k}\upshape\cite{JLZ}
Let $Q_k$ be the graph obtained from two copies of $K_{1,k-1}$ with
$k\geq 2$ by identifying a leaf vertex in one copy with a leaf vertex in
the other copy. Then $Q_k$ is $k$-$\mathit{cfc}$-critical.
\end{thm}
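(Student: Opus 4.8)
The plan is to work throughout with the tree structure of $Q_k$. Label the two centers $c_1,c_2$ (each of degree $k-1$), the identified leaf $w$ (of degree $2$, adjacent to both centers), and the remaining leaves $l_{1,2},\dots,l_{1,k-1}$ and $l_{2,2},\dots,l_{2,k-1}$. Since $Q_k$ is a tree, for each pair $u,v$ the unique $u$--$v$ path is the only path, so a coloring is conflict-free connecting iff on every such path some color is used exactly once; I shall use this together with Theorem~\ref{scfc tree}. The one structural observation I will lean on is that the $k-1$ edges at $c_1$ must receive pairwise distinct colors, since any two of them form a length-$2$ path between leaves, which is conflict-free only when its two edges differ; the same holds at $c_2$. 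The case $k=2$, where $Q_2\cong P_3$, is trivial and I would dispose of it separately, so assume $k\ge 3$.

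For the lower bound $\mathit{cfc}(Q_k)\ge k$, I would assume a conflict-free coloring with $k-1$ colors and reach a contradiction. By the observation the edges at $c_1$ then use each color of $\{1,\dots,k-1\}$ exactly once, and likewise at $c_2$; writing $\alpha,\beta$ for the colors of $c_1w,c_2w$, the leaf-edges at $c_1$ realize exactly $\{1,\dots,k-1\}\setminus\{\alpha\}$ and those at $c_2$ realize $\{1,\dots,k-1\}\setminus\{\beta\}$. The crux is to examine one crossing path $P=l_{1,i}\,c_1\,w\,c_2\,l_{2,j}$, with edge-colors $a_i,\alpha,\beta,b_j$, and to choose $(i,j)$ so $P$ is not conflict-free. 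If $\alpha=\beta$, pick any $\gamma\ne\alpha$ (possible since $k-1\ge 2$) and take indices with $a_i=b_j=\gamma$, giving colors $\gamma,\alpha,\alpha,\gamma$; if $\alpha\ne\beta$, take $i$ with $a_i=\beta$ and $j$ with $b_j=\alpha$ (both exist by the description of the realized colors), giving $\beta,\alpha,\beta,\alpha$. In either case every color on $P$ occurs twice, so $P$ is not conflict-free, a contradiction.

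For the matching upper bound I would give an explicit $k$-coloring governed by a single principle: if an edge incident to $w$ gets a \emph{private} color used nowhere else, then every path containing that edge is automatically conflict-free. Concretely, color $c_1w$ with the private color $k$, and color the remaining edges at $c_1$ and the edges at $c_2$ so that the edge-set at each center is rainbow using colors from $\{1,\dots,k-1\}$. Every path through $c_1w$ (in particular every leaf-to-leaf crossing path) then carries color $k$ exactly once, while every path avoiding $c_1w$ lies inside a single star and is rainbow; hence the coloring is conflict-free connecting and $\mathit{cfc}(Q_k)\le k$, so $\mathit{cfc}(Q_k)=k$.

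For criticality (interpreting proper subgraphs as connected ones, since $\mathit{cfc}$ requires connectivity) I would first reduce the subgraphs to be tested. The connected proper subgraphs of a tree are its proper subtrees, and $\mathit{cfc}$ is monotone under passing to a subtree $T'$: restricting a conflict-free coloring stays conflict-free because the unique $T'$-path between two vertices coincides with their path in the ambient tree. A short argument shows every proper subtree of $Q_k$ omits at least one leaf, so it suffices to prove $\mathit{cfc}(Q_k-\ell)<k$ for each leaf $\ell$; by symmetry take $\ell=l_{1,k-1}$, after which $c_1$ has degree $k-2$ and $c_2$ still forces $\mathit{cfc}\ge k-1$. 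I would match this with a $(k-1)$-coloring using the same private-color principle: give $c_2w$ the private color $k-1$ and color each center's edges rainbow with the remaining colors $\{1,\dots,k-2\}$; then every crossing path contains $c_2w$ and is conflict-free, while every path avoiding $c_2w$ stays within one star and is rainbow, giving $\mathit{cfc}(Q_k-\ell)=k-1<k$. The step I expect to be the main obstacle is the lower bound: one must show that with only $k-1$ colors the forced rainbow patterns at the two centers inevitably collide on some crossing path, and the delicate points are getting the split on whether $c_1w,c_2w$ share a color exactly right and the boundary counting that guarantees the required indices $i,j$ exist (which is precisely what forces $k\ge 3$); by contrast the two constructions are direct verifications once the private-color idea is fixed.
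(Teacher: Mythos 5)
The paper only quotes Theorem~\ref{Q_k} from \cite{JLZ} and contains no proof of it, so there is no in-paper argument to compare yours against. Judged on its own, your proof is correct and complete: the rainbow constraint at each centre forces $k-1$ colours and makes the edge sets at $c_1$ and $c_2$ realize all colours, the case split on whether $c(c_1w)=c(c_2w)$ correctly produces a crossing path $l_{1,i}c_1wc_2l_{2,j}$ on which every colour appears twice (with $k\ge 3$ guaranteeing the required indices and $k=2$ handled separately as $P_3$), the private-colour construction gives the matching upper bound, and the reduction of criticality to leaf-deleted subtrees via monotonicity of $\mathit{cfc}$ on subtrees of a tree is sound.
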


\section{Upper and lower bounds}\label{The bound}

At first, let us look at trees.
\begin{thm}\label{diameter}
Let $T$ be a tree of order $n$. Then
\begin{center}
$\max\{\lceil\log_2(diam(T)+1)\rceil,\Delta(T)\}$$\leq \mathit{scfc}(T)\leq n-1$.
\end{center}
\end{thm}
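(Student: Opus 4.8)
The plan is to establish the three inequalities separately and then combine the two lower bounds. Throughout I would exploit the defining feature of a tree: between any two vertices there is a \emph{unique} path, and this unique path is automatically the (unique) shortest path. Hence a coloring makes $T$ strongly conflict-free connected precisely when every one of these unique paths is a conflict-free path. For the upper bound $\mathit{scfc}(T)\le n-1$, I would simply assign the $n-1$ edges of $T$ pairwise distinct colors. Then every path in $T$ is rainbow, so in particular the unique (shortest) path between any two vertices has a color appearing exactly once; thus $T$ is strongly conflict-free connected with $n-1$ colors.

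For the lower bound $\lceil\log_2(diam(T)+1)\rceil\le\mathit{scfc}(T)$, I would fix an optimal strong conflict-free connection coloring $c$ of $T$ and pick a diametral path $P$, i.e.\ a path with $diam(T)+1$ vertices joining two vertices at distance $diam(T)$. The key observation is that $c$ restricted to the edges of $P$ is itself a valid strong conflict-free connection coloring of $P$: for any two vertices $u,v$ on $P$, the unique $u$-$v$ path in $T$ is the sub-path of $P$ between them, it has length $d_T(u,v)=d_P(u,v)$, and it is conflict-free because $c$ makes $T$ strongly conflict-free connected. Since scfc is the minimum number of colors, the number of colors $c$ uses on $P$ is at least $\mathit{scfc}(P)$; but this number is at most the total number of colors, namely $\mathit{scfc}(T)$. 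By Theorem~\ref{scfc tree}, $\mathit{scfc}(P)=\lceil\log_2(diam(T)+1)\rceil$, which gives the bound.

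For the lower bound $\Delta(T)\le\mathit{scfc}(T)$, I would take a vertex $v$ with $\deg(v)=\Delta(T)$ and consider any two of its neighbors $u_1,u_2$. Their unique (hence shortest) path in $T$ is $u_1vu_2$, a path of length $2$; for it to be conflict-free, one of its two edges must carry a color appearing only once on the path, which forces $c(vu_1)\ne c(vu_2)$. As this holds for every pair of neighbors of $v$, all $\Delta(T)$ edges incident with $v$ receive distinct colors, so $\mathit{scfc}(T)\ge\Delta(T)$. Combining this with the previous paragraph yields $\max\{\lceil\log_2(diam(T)+1)\rceil,\Delta(T)\}\le\mathit{scfc}(T)$, completing the proof.

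The arguments are not deep; the one point that needs care---and that I regard as the crux---is the restriction step in the diametral-path bound, namely that a strong conflict-free connection coloring of $T$ genuinely restricts to such a coloring of the sub-path $P$. This relies on the uniqueness of paths in a tree (so that shortest paths in $P$ and in $T$ coincide and lie inside $P$), a property that fails for general graphs and is exactly why the analogous monotonicity can break down, as the Remark after Proposition~\ref{C_n} already illustrates.
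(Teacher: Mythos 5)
Your proof is correct and follows essentially the same route as the paper: distinct colors on all edges for the upper bound, the restriction of the coloring to a diametral path combined with Theorem~\ref{scfc tree} for the logarithmic lower bound, and the observation that adjacent edges must receive distinct colors for the $\Delta(T)$ lower bound (the paper phrases this last point as ``a strong conflict-free connection coloring of a tree must be a proper edge-coloring, so $\mathit{scfc}(T)\ge\chi'(T)=\Delta(T)$,'' which is the same local argument you give at a maximum-degree vertex). Your explicit justification of the restriction step via uniqueness of paths in a tree is a welcome bit of care that the paper leaves implicit.
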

\begin{proof}
Clearly, it is a strong conflict-free coloring that colors the edges of $T$ with distinct colors, and so the upper bound holds. For the lower bound, let $P$ be a path of length $diam(T)$, which needs at least $\lceil\log_2(diam(T)+1)\rceil$ colors by Theorem \ref{scfc tree}. Meanwhile, since a strong conflict-free connection coloring of a tree must be a proper edge-coloring, it is obvious that $\mathit{scfc}(T) \geq \chi'(T)= \Delta(T)$. Now we show that the upper bound is sharp. Let $H=(V(H),E(H))$ be a star. Then $\mathit{scfc}(H)=|E(H)|$ by Theorem \ref{scfc tree}. The lower bound is sharp by Theorem \ref{scfc tree}.
\end{proof}

Before we show the following theorem, we first define the notion of \emph{$t$-parallel paths}. Let $G$ be a connected graph and let $u$, $v$ be two vertices of $G$. If there are $t$ paths between $u$ and $v$ in $G$, where the degree of internal vertices of the paths is 2, then we call the paths \emph{$t$-parallel paths}.
\begin{thm}\label{scfc>2}
Let $G$ be a connected graph and let $v$, $u$ be two vertices of $G$ with $d(u,v)\geq2$. If one of the following conditions holds, then $\mathit{scfc}(G)\geq 3$.
\begin{enumerate}
  \item There exist a cut-vertex $w$ which splits $G$ into at least three components by deleting $w$.
  \item There exists a path $P$ of length at least $4$ between $u$ and $v$, where the edges of the path are bridges.
  \item There exist \emph{2-parallel paths} between $u$ and $v$, where the length of one path is $2$ and the length of the other one is $3$.
  \item There exist \emph{5-parallel paths} between $u$ and $v$.
\end{enumerate}

\end{thm}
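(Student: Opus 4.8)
The plan is to argue by contradiction: suppose $G$ admits a strong conflict-free connection coloring $\varphi$ using only the two colors $\{1,2\}$. Since $d(u,v)\ge 2$ the graph is not complete, so $\mathit{scfc}(G)\ge 2$ and it suffices to rule out the value $2$ in each of the four cases. The one fact I use repeatedly is that under two colors a single edge is always conflict-free, while a path of length $2$ is conflict-free if and only if its two edges receive different colors. Hence everything rests on locating, for suitable vertex pairs, a length-$2$ shortest path that is \emph{forced to be unique}: for such a pair the two edges must then get distinct colors, and it is exactly the cut-vertex, bridge, and degree-$2$ hypotheses that supply this rigidity. Accordingly, the heart of each case is a uniqueness-of-shortest-path claim followed by an immediate color count.

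Cases (1) and (2) are quickest. For (1), let $w$ be the cut-vertex and pick a neighbor $a_i$ of $w$ in each of three distinct components of $G-w$; every $a_i$--$a_j$ path must pass through $w$, and since each $a_i$ is adjacent to $w$ the path $a_i w a_j$ is the \emph{unique} shortest $a_i$--$a_j$ path. Conflict-freeness for all three pairs forces $\varphi(wa_1),\varphi(wa_2),\varphi(wa_3)$ to be pairwise distinct, i.e. three colors, contradicting our two colors; equivalently $\mathit{scfc}(G)\ge \mathit{scfc}(S_3)=3$ by Theorem \ref{scfc tree}. For (2), write $P=x_0x_1\cdots x_k$ with $k\ge 4$ and all edges bridges. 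Any $x_i$--$x_j$ path must traverse every bridge between them, so the subpath $x_i\cdots x_j$ is the unique shortest $x_i$--$x_j$ path. Restricting $\varphi$ to $x_0x_1x_2x_3x_4$ would then be a strong conflict-free connection coloring of $P_5$ with at most two colors, contradicting $\mathit{scfc}(P_5)=\lceil\log_2 5\rceil=3$ from Theorem \ref{scfc tree}.

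For (3) the two parallel paths glue into a $5$-cycle $u\,a\,v\,c\,b\,u$ in which $a,b,c$ have degree $2$. Each degree-$2$ vertex pins down the midpoint of the length-$2$ paths it governs, so the pairs $(a,b),(a,c),(c,u),(b,v)$ each have a unique shortest path, giving the forced inequalities $\varphi(ua)\ne\varphi(ub)$, $\varphi(av)\ne\varphi(vc)$, $\varphi(cb)\ne\varphi(ub)$ and $\varphi(cb)\ne\varphi(vc)$. Propagating these through the even difference-chain with two colors forces $\varphi(ua)=\varphi(av)$, so the length-$2$ path $u\,a\,v$ is monochromatic and not conflict-free. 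The real work, which I flag as the main obstacle of the case, is to rule out a rescue through some other common neighbor $a'$ of $u$ and $v$. The key point is that $a'\notin\{a,b,c\}$ cannot be adjacent to $b$ or $c$, since that would raise their degree above $2$; hence the pairs $(a',b)$ and $(a',c)$ again have unique shortest paths, through $u$ and through $v$ respectively, which force $\varphi(ua')=\varphi(va')$ as well. Thus \emph{every} length-$2$ $u$--$v$ path is monochromatic, so $(u,v)$ has no conflict-free shortest path — the desired contradiction.

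Finally, (4) is handled by pigeonhole. Because $d(u,v)\ge 2$, each of the five parallel paths has length $\ge 2$ and contributes a distinct edge $e_i=ux_1^i$ at $u$; among $\varphi(e_1),\dots,\varphi(e_5)$ some color, say $1$, occurs at least three times, on paths $P_i,P_j,P_k$. For any two of these the length-$2$ path $x_1^i\,u\,x_1^j$ is monochromatic, hence not conflict-free, so another shortest path is required. If any of $P_i,P_j,P_k$ has length $\ge 3$, then for the corresponding pair the route through $v$ is strictly longer, so the through-$u$ path is the unique shortest path, an immediate contradiction; otherwise all three have length $2$, each pair $(x_1^i,x_1^j)$ has exactly the two shortest paths through $u$ and through $v$, and conflict-freeness forces the three $v$-side edges to be pairwise distinct, again impossible with two colors. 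Either way $\mathit{scfc}(G)\ge 3$. I expect the two genuinely delicate points to be the elimination of extra common neighbors in (3) and the length case-split in (4); once the degree-$2$ hypotheses are used to pin the shortest paths, the color contradictions fall out at once.
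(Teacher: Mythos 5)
Your proof is correct, and for cases (1) and (2) it is essentially the paper's argument: the paper reduces (1) to the induced star $S_3$ at the cut-vertex and (2) to the bridge-path via Theorem \ref{scfc tree}, exactly as you do through unique shortest paths. Where you genuinely diverge is in cases (3) and (4), and there your version is the stronger one. For (3) the paper only observes that the two parallel paths form a $5$-cycle and then asserts ``clearly $\mathit{scfc}(G)\geq 3$''; this is not automatic, since $C_5$ as a subgraph does not by itself bound $\mathit{scfc}(G)$ from below --- one needs the degree-$2$ hypothesis to pin down the shortest paths, and one must also exclude a rescue via a further common neighbor $a'$ of $u$ and $v$ outside the cycle. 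Your color-propagation around the cycle forcing $\varphi(ua)=\varphi(av)$, together with the observation that $a'$ cannot be adjacent to the degree-$2$ vertices $b,c$ and hence inherits the same forced equality $\varphi(ua')=\varphi(va')$, supplies exactly the missing content. For (4) the paper again only asserts that some two internal vertices fail to be joined by a strong conflict-free path; your pigeonhole on the five edges at $u$, followed by the split according to whether one of the three same-colored paths has length at least $3$ (unique shortest path through $u$, immediately monochromatic) or all three have length $2$ (the three $v$-side edges must be pairwise distinct, impossible with two colors), turns that assertion into a proof. In short: same strategy, but your write-up closes real gaps in the published argument for conditions (3) and (4).
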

\begin{proof}
1. Let $C_1,C_2,\cdots, C_m$ $(m\geq 3)$ be the components when deleting $v$ from $G$. We choose a vertex $u_i$ which is adjacent to $v$ in each component $C_i$. Clearly, each pair of $u_i$ and $u_j$ contains the only path, and it contains $v$. Consequently, the subgraph of $G$  induced by $\{u_1,u_2,\cdots,u_m,v\}$ is a star $S_m$. By Theorem \ref{scfc tree}, we have $\mathit{scfc}(S_m)=m$. Because every pair of $u_i$ and $u_j$ has the same path in $S_m$ and in $G$, we have that $\mathit{scfc}(G)\geq \mathit{scfc}(S_m)=m\geq 3$.

2. Let $P=v_1v_2\cdots v_t$ $(t\geq 4)$. Since every edge of $P$ is a bridge, each pair of vertices $v_i$, $v_j$ contain the same path in $P$ and in $G$. Hence, we have $\mathit{scfc}(G)\geq \mathit{scfc}(P)\geq3$.

3. Since the lengths of the two paths are 2 and 3, there is a 5-cycle in $G$. Clearly, $\mathit{scfc}(G)\geq 3$.

4. Since $d(u,v)\geq2$, every path between $u$ and $v$ has a length at least 2. If we assign a coloring with 2 colors for the paths, then there always exist at least two internal vertices of the paths which do not contain a strong conflict-free path. Consequently, $\mathit{scfc}(G)\geq 3$.
\end{proof}

We now define a graph class. Let $S_m$ be a star with $m$ leaves $v_1,v_2,\cdots,v_m$.  We denote  by $S_{m,t}$ the graph $S_m +\{v_{i_1}v_{i_2},v_{i_3}v_{i_4},\cdots,v_{i_{t-1}}v_{i_t}\}$ $(v_{i_j}\neq v_{i_k}| j,k\in[t], i_k,i_j\in [m])$.

\begin{thm}\label{G_t}
If $G_t$ is a connected graph with $m$ $(m\geq2)$ edges and $t$ edge-disjoint triangles, then
$\mathit{scfc}(G_t)\leq m-2t$,
and the equality holds if and only if $G_t\cong S_{m,t}$.
\end{thm}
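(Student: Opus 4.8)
The plan is to prove the inequality by an explicit coloring and then pin down the equality case by showing that any deviation from $S_{m,t}$ lets us save a color.

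\textbf{The upper bound.} Let $T_1,\dots,T_t$ be the $t$ edge-disjoint triangles. I would color all three edges of $T_i$ with a single color $c_i$, using distinct colors $c_1,\dots,c_t$ for the different triangles, and give each of the remaining $m-3t$ edges its own fresh color; this uses exactly $t+(m-3t)=m-2t$ colors. The key observation is that no shortest path can use two edges of the same triangle: if a shortest $u$--$v$ path contained two edges of a triangle $xyz$, they would meet at a common vertex and could be replaced by the third side, producing a strictly shorter walk, a contradiction. Hence on any shortest path each color $c_i$ occurs at most once and each fresh color occurs at most once, so every shortest path is rainbow, and in particular conflict-free. This gives $\mathit{scfc}(G_t)\le m-2t$.

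\textbf{The ``if'' direction.} For $G_t\cong S_{m,t}$ I would verify that the above bound is attained. Writing $c$ for the center, $S_{m,t}$ has $m-t$ star edges and $t$ triangle edges, the triangles being $\{c,a_i,b_i\}$. Since $\mathrm{diam}(S_{m,t})=2$, the only nontrivial shortest paths join two leaves, and two leaves are adjacent exactly when they form a triangle pair. For a non-paired pair of leaves $x,y$ the unique shortest path is $x\,c\,y$, which is conflict-free only if the star edges $cx$ and $cy$ receive different colors. Therefore two star edges may share a color only when their leaves form a triangle pair, so among the $m-t$ star edges a color class has size at most two and, when it has size two, consists of exactly one triangle pair. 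Thus at least $(m-t)-t=m-2t$ colors are forced on the star edges, giving $\mathit{scfc}(S_{m,t})\ge m-2t$; combined with the upper bound, equality holds, the triangle edges being free to reuse existing colors since they only serve the length-$1$ pairs $a_ib_i$.

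\textbf{The ``only if'' direction and the main obstacle.} Here I would argue the contrapositive: if $G_t\not\cong S_{m,t}$ then $\mathit{scfc}(G_t)\le m-2t-1$. Starting from the canonical coloring of the upper bound, I would try to merge two color classes while preserving strong conflict-free connection, which should be possible precisely when the rigid structure forbidding merges---every non-triangle edge a star edge at one universal center, and every distance-$2$ pair having a unique common neighbour---fails. Concretely I expect a case analysis on how $G_t$ departs from $S_{m,t}$: if some cycle other than the $T_i$ is present, Corollary \ref{onecycle} together with Theorem \ref{cycle} already beats $m-2t$; if $\mathrm{diam}(G_t)\ge 3$, a shortest path of length at least $3$ can be made conflict-free with a repeated color, freeing a merge; and if the diameter is $2$ but some distance-$2$ pair has two common neighbours, the extra shortest path lets two edges share a color. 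The main difficulty is organizing these savings uniformly: I must show that every admissible graph other than $S_{m,t}$ exhibits at least one such feature, and that the corresponding merge never destroys a shortest conflict-free path for some other pair. Making this case analysis exhaustive, and checking that a single merge is always globally safe, is what I expect to be the technical heart of the proof.
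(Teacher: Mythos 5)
Your upper bound is exactly the paper's: collapse each triangle $T_i$ to a single colour, give every other edge a fresh colour, and observe that a shortest path cannot use two edges of one triangle (they would be consecutive at their common vertex and could be replaced by the third side). Your lower bound $\mathit{scfc}(S_{m,t})\ge m-2t$ is also sound, and in fact more carefully argued than the paper's Claim~1: two non-paired leaves have the unique shortest path through the centre, so a colour class among the star edges has size at most two and must be a triangle pair, forcing at least $(m-t)-t$ colours.

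The genuine gap is the ``only if'' direction, which you yourself flag as the technical heart and do not carry out; it is most of the content of the equality statement. Two concrete problems with the sketch as written. First, the assertion that an extra cycle $C$ ``already beats $m-2t$'' via Corollary~\ref{onecycle} is false: that corollary gives only $m-|C|+\lceil\log_2|C|\rceil$, which exceeds $m-2t$ once $t\ge 2$ and $C$ is short; to actually save a colour one must \emph{simultaneously} colour each triangle monochromatically and the cycle $C$ economically (which also requires knowing $C$ is edge-disjoint from the triangles). This combined colouring is precisely the paper's Claim~2. Second, your trichotomy is neither shown exhaustive nor shown safe: the third branch (a distance-$2$ pair with two common neighbours) already creates a $4$-cycle and so is subsumed by the first, while the configuration that really needs work --- a triangle with two vertices of degree at least $3$, each carrying only pendant edges, so that the diameter-$3$ path runs \emph{through} a triangle --- requires its own merge (identify the colours of the two pendant edges $u_1v_1$ and $u_2v_2$) and its own verification; the paper devotes Claim~3, with a three-way case split according to whether those incident edges lie in triangles, to exactly this. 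The paper then finishes with Claims~4 and~5 (the cut-edges induce a tree of diameter at most $2$), each proved by exhibiting an explicit $(m-2t-1)$-colouring when the property fails, and reads off $G_t\cong S_{m,t}$. Until you supply the exhaustive case analysis and check that each merge preserves strong conflict-free connection globally, the characterization of equality remains unproven.
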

\begin{proof}
Clearly, $\mathit{scfc}(K_3)=1$. Now we first give a coloring of $G_t$: color each triangle with a distinct color, that is, the three edges of each triangle receive a same color, and color each of the remaining $m-3t$ edges with a distinct color. Let $P$ be a strong conflict-free path for any pair of vertices $u$ and $v$ in $G$. Clearly, $P$ contains at most one edge from each triangle. Otherwise, it will produce a contradiction. Thus, $G_t$ is strongly conflict-free connected. So $\mathit{scfc}(G_t)\leq m-2t$.

We now show that the equality holds if and only if $G_t\cong S_{m,t}$.

\textbf{Claim 1.} $\mathit{scfc}(S_{m,t})=m-2t$.

\textbf{Proof of Claim 1.} Clearly, $\mathit{scfc}(S_{m,t})\leq m-2t$. It remains to show the other round. Note that every pendant edge needs a distinct color and every triangle needs a fresh color. Assume that we color some triangle with one color used on some pendant edge. Then the shortest path is not a conflict-free path between the leaf incident with the pendant edge and one vertex of degree two. Also, if we provide the $t$ triangles with $t-1$ colors, there exist two triangle with the same color. There would also not exist a strong conflict-free path between the vertices of the two triangles. Consequently, $\mathit{scfc}(S_{m,t})\geq m-2t$.

\textbf{Claim 2.} Every edge is a cut-edge except the edges of triangles.

\textbf{Proof of Claim 2.} Assume that there is a cycle $C$ $(|C|\geq 3)$ except the $t$ triangles. By Theorem \ref{cycle}, we know that $\mathit{scfc}(C)\leq \lceil\log_2{|C|}\rceil$. Now we provide a coloring: color every triangle with a distinct color and color $C$ with $\lceil\log_2{|C|}\rceil$ fresh colors, and the remaining edges are colored by $m-|E(C)|-3t$ fresh colors. Clearly, $G_t$ is strongly conflict-free connected. So, $\mathit{scfc}(G_t)\leq m-2t+\lceil\log_2{|C|}\rceil-|C|\leq m-2t-1$, a contradiction.

\textbf{Claim 3.} Each triangle contains at least two vertices of degree two in $G_t$.

\textbf{Proof of Claim 3.} Assume that there is only one vertex of degree two in a triangle $A$, say $A=v_1v_2v_3v_1$. Without loss of generality, let $u_1v_1$ and $u_2v_2$ be two edges. We will consider the following three cases:

\emph{Case 1.} Both $u_1v_1$ and $u_2v_2$ are not contained in triangles. In order to find out a contradiction, we provide a coloring $c$: assign each triangle with a distinct color; assign both $u_1v_1$ and $u_2v_2$ with a fresh same color; the remaining $m-2-3t$ edges are colored by $m-2-3t$ fresh colors. We only need to check $u_1$-$u_2$ paths. By Claim 2, there is no other cycle except the $t$ triangles. So $u_1v_1v_2u_2$ is the unique path which is strongly conflict-free connected. Clearly, $G_t$ is strongly conflict-free connected. Hence, $\mathit{scfc}(G_t)\leq (m-2-3t)+1+t=m-2t-1$, a contradiction.

\emph{Case 2.} $u_1v_1$ and $u_2v_2$ are contained in different triangles. Let $X_1$ contain $u_1v_1$ and let $X_2$ contain $u_2v_2$. We now provide a coloring: assign $X_1$ and $X_2$ with the same color; assign the other triangles with $t-2$ fresh colors; each of the remaining edges is colored by a fresh color. Clearly, $G_t$ is strongly conflict-free connected, a contradiction.

\emph{Case 3.} One of $u_1v_1$ and $u_2v_2$ is contained in a triangle. Without loss of generality, let $u_1v_1$ be contained in a triangle $X_3$. We color $X_3$ and $u_2v_2$ with the same color, the coloring of remaining edges is the same as Case 2. Also, this is a strong conflict-free connection coloring, a contradiction. Completing the proof of Claim 3.

Let $C(G_t)$ be the graph induced by all the cut-edges of $G_t$.

\textbf{Claim 4.} $C(G_t)$ is a tree.

\textbf{Proof of Claim 4.} Assume $C(G_t)$ is not connected. Let $H_1$ and $H_2$ be two components with $C(G_t)=H_1\cup H_2$. There exists one leaf $r_1$ in $H_1$ and one leaf $r_2$ in $H_2$ which are contained in the same triangle, say $r_1vr_2r_1$. Otherwise, $G_t$ is not connected. But both $d(r_1)\geq 3$ and $d(r_2)\geq3$, which contradicts Claim 3.

\textbf{Claim 5.} $diam(C(G_t))\leq 2$.

\textbf{Proof of Claim 5.} Assume that $diam(C(G_t))=k\geq3$. Let $P=v_0v_1\cdots v_k$ be a path of length $k$. Then we provide a coloring $c:$ $E(G)\mapsto [m-2t-k+\lceil\log_2(k+1)\rceil]$ of $G_t$: assign the edges of $P$ with $\lceil\log_2k\rceil$ colors to make $P$ strongly conflict-free connected by Theorem \ref{scfc tree}; assign each of the $t$ triangles with a fresh color; assign each of the remaining $m-3t-k$ edges with a fresh color. Clearly, $G_t$ is strongly conflict-free connected, a contradiction.

Clearly, from the above Claims we can deduce that $G_t\cong S_{m,t}$.
\end{proof}

\section{Graphs with large or small $\mathit{scfc}$ numbers}\label{Large and small}

In this section, we characterize the connected graphs $G$ of size $m$ with $\mathit{scfc}(G)=k$ for $k\in \{1,m-3,m-2,m-1,m\}$.

\begin{thm}\label{scfc=1}
For a nontrivial connected graph $G$,
$\mathit{scfc}(G)=1$ if and only if $G$ is a complete graph.
\end{thm}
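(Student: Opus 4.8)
The plan is to prove both directions of the equivalence, where the interesting content lies in showing that $\mathit{scfc}(G)=1$ forces $G$ to be complete. First I would dispose of the easy direction: if $G$ is a complete graph, then $d_G(u,v)=1$ for every pair of distinct vertices $u,v$, so a shortest path between them is just the single edge $uv$. A path consisting of one edge trivially has a color appearing exactly once (the color of that edge), so it is a conflict-free path of length $d_G(u,v)=1$. Hence a single color suffices to make $G$ strongly conflict-free connected, giving $\mathit{scfc}(G)=1$. (The coloring uses only one color, and one color is clearly necessary for any nontrivial graph, so equality holds.)

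For the forward direction, I would argue by contrapositive: suppose $G$ is a nontrivial connected graph that is \emph{not} complete, and show $\mathit{scfc}(G)\geq 2$. Since $G$ is not complete, there exist two vertices $u,v$ that are nonadjacent, so $d_G(u,v)\geq 2$. A strong conflict-free path connecting them must have length $d_G(u,v)\geq 2$, hence it uses at least two edges. If only one color were available, then every path of length at least $2$ would have all its edges sharing that single color, so no color could appear exactly once on such a path — the path would fail to be conflict-free. Therefore one color cannot make $G$ strongly conflict-free connected, and $\mathit{scfc}(G)\geq 2$. This establishes that $\mathit{scfc}(G)=1$ implies $G$ is complete.

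The main (and really the only) subtlety is the observation that with a single color, a path of length at least $2$ can never be conflict-free, since \emph{conflict-free} requires some color to be used on exactly one edge of the path, which is impossible when all edges carry the same color and there are at least two of them. I do not expect any genuine obstacle here; the result is flagged as a characterization but the argument is short once one notes that $\mathit{scfc}(G)=1$ forces the shortest path between every pair to have length $1$ (so every pair is adjacent), which is exactly the definition of a complete graph. A clean way to package the forward direction is to note directly that $\mathit{scfc}(G)=1$ means every pair $u,v$ admits a conflict-free path of length $d_G(u,v)$ using one color; such a path must have length $1$, forcing $d_G(u,v)=1$ for all pairs, i.e.\ $G\cong K_n$.
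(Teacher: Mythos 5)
Your proposal is correct and follows essentially the same route as the paper: the easy direction is immediate from coloring all edges with one color, and the converse is the observation that a non-complete graph has a pair at distance at least $2$, whose shortest paths cannot be conflict-free under a single color, forcing $\mathit{scfc}(G)\geq 2$. You simply spell out the details that the paper leaves as "clearly."
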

\begin{proof}
Suppose that $G$ is a complete graph. Clearly, we have that $\mathit{scfc}(G)=1$. Conversely, suppose that $\mathit{scfc}(G)=1$. Assume that $G$ is not complete. Then there exists a pair of vertices $u,v$ with $d(u,v)\geq 2$. So, $\mathit{scfc}(G)\geq 2$, a contradiction. Thus, $G$ must be a complete graph.
\end{proof}

We now present an observation which will be used in the sequel.
\begin{ob}\label{observation}
Let $G$ be a connected graph with $\mathit{scfc}(G)=|E(G)|-k$ and let $H$ be a connected graph with $\mathit{scfc}(H)\leq |E(H)|-k-1$. Then $G$ does not contain a copy of $H$.
\end{ob}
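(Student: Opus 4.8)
The plan is to argue by contradiction. I would suppose that $G$ does contain a copy of $H$ as a subgraph, and then build a strong conflict-free connection coloring of $G$ that uses strictly fewer than $|E(G)|-k$ colors; this contradicts the hypothesis $\mathit{scfc}(G)=|E(G)|-k$ and forces the conclusion that no copy of $H$ can occur.

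First I would fix an optimal strong conflict-free connection coloring of the copy of $H$ sitting inside $G$, which by assumption uses $\mathit{scfc}(H)\le |E(H)|-k-1$ colors. Then, exactly in the spirit of the argument for Corollary \ref{onecycle}, I would color each of the remaining $|E(G)|-|E(H)|$ edges of $G$ (those not belonging to the copy of $H$) with its own fresh color, disjoint from the palette used on $H$. The total number of colors is at most $\big(|E(H)|-k-1\big)+\big(|E(G)|-|E(H)|\big)=|E(G)|-k-1$, which is one less than $\mathit{scfc}(G)$.

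The heart of the proof, and the step I expect to require the most care, is verifying that this merged coloring really is a strong conflict-free connection coloring of $G$, i.e. that every pair $u,v$ still admits a conflict-free shortest path in $G$. I would split this into cases according to where a shortest $u$-$v$ path of $G$ can be routed. If at least one of $u,v$ lies outside $V(H)$, then any shortest path begins with an edge incident to a vertex not in $V(H)$; since every edge of the copy of $H$ has both endpoints in $V(H)$, that edge is a fresh edge and its color appears only once in all of $G$, so the path is automatically conflict-free. If both $u,v\in V(H)$, I would compare $d_G(u,v)$ with $d_H(u,v)$: when $d_G(u,v)=d_H(u,v)$ the optimal coloring of $H$ already supplies a conflict-free path of the correct length inside $H\subseteq G$; when $d_G(u,v)<d_H(u,v)$ no shortest $u$-$v$ path of $G$ can lie entirely in $H$, so such a path again uses a fresh edge and is conflict-free.

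Assembling the cases shows the merged coloring makes $G$ strongly conflict-free connected with at most $|E(G)|-k-1$ colors, whence $\mathit{scfc}(G)\le |E(G)|-k-1<|E(G)|-k=\mathit{scfc}(G)$, the desired contradiction. The only genuinely delicate point is the observation that a vertex outside $V(H)$ meets only fresh-colored edges, which is exactly what lets the fresh colors rescue every pair not already handled by $H$'s own coloring; once that is granted, the remaining verification is routine.
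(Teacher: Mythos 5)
Your proposal is correct and follows essentially the same route as the paper's own proof: assume a copy of $H$ exists, color it with an optimal strong conflict-free connection coloring using at most $|E(H)|-k-1$ colors, give every remaining edge a fresh color, and conclude $\mathit{scfc}(G)\leq |E(G)|-k-1$, a contradiction. Your case analysis merely spells out the verification that the paper compresses into ``Clearly, $G$ is strongly conflict-free connected,'' and it is sound.
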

\begin{proof}
Assume, to the contrary, that $G$ contains a copy of $H$. Then, we give  a coloring for $G$ as follows: assign the edges of $H$ with $|E(H)|-k-1$ colors to make $H$ strongly conflict-free connected, and then assign each of the remaining $m-|E(H)|$ edges of $G$ with a fresh color. Clearly, $G$ is strongly conflict-free connected. Consequently, $\mathit{scfc}(G)\leq E(G)-|E(H)|+|E(H)|-k-1\leq E(G)-k-1$, a contradiction.
\end{proof}

The following are two useful lemmas which will help to prove our latter theorems.
\begin{lem}\label{logdiam-diam}
Let $G$ be a connected graph with size $m$ and $\mathit{scfc}(G)= m-k$. Then
\begin{center}
$diam(G)-\lceil\log_2(diam(G)+1)\rceil\leq k$.
\end{center}
\end{lem}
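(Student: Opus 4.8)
The plan is to exploit a diametral path as a forbidden subgraph via Observation \ref{observation}. Write $d = diam(G)$ and choose two vertices $u,v$ with $d_G(u,v)=d$, together with a shortest $u$-$v$ path $P$. Since $P$ has length $d$, it is a copy of the path $P_{d+1}$ on $d+1$ vertices, so by Theorem \ref{scfc tree} we have $\mathit{scfc}(P)=\lceil\log_2(d+1)\rceil$, while $|E(P)|=d$. Thus $P$ is a concrete connected subgraph of $G$ whose $\mathit{scfc}$ value is controlled exactly by the quantity appearing in the statement.

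I would then argue by contradiction. Suppose $diam(G)-\lceil\log_2(diam(G)+1)\rceil>k$, that is $\lceil\log_2(d+1)\rceil<d-k$. Since all three of $\lceil\log_2(d+1)\rceil$, $d$ and $k$ are integers, this strict inequality is equivalent to $\lceil\log_2(d+1)\rceil\le d-k-1=|E(P)|-k-1$, hence $\mathit{scfc}(P)\le |E(P)|-k-1$. Now $P$ is a connected graph contained in $G$ and $\mathit{scfc}(G)=m-k=|E(G)|-k$, so Observation \ref{observation} (applied with $H=P$) forces that $G$ cannot contain a copy of $P$. This contradicts the choice of $P$ as a shortest $u$-$v$ path inside $G$, and the claimed inequality $diam(G)-\lceil\log_2(diam(G)+1)\rceil\le k$ follows.

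The argument is short once Observation \ref{observation} is available; the only point demanding a little care is the passage from the strict inequality $\lceil\log_2(d+1)\rceil<d-k$ to $\lceil\log_2(d+1)\rceil\le d-k-1$, which is legitimate precisely because all the quantities involved are integers. I do not expect a genuine obstacle here: the substantive work has been offloaded into Observation \ref{observation}, whose colouring gives $P$ a strong conflict-free connection colouring and assigns every other edge of $G$ a distinct fresh colour, so that any shortest path leaving $P$ automatically contains a uniquely coloured edge, while any shortest path staying inside $P$ inherits a conflict-free shortest path from the colouring of $P$.
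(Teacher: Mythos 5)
Your proof is correct and follows essentially the same route as the paper: the paper colours a diametral path $P$ with $\lceil\log_2(diam(G)+1)\rceil$ colours and gives every remaining edge a fresh colour, which is exactly the construction underlying Observation \ref{observation}, so invoking that observation with $H=P$ is just a repackaging of the same argument in contrapositive form. The integrality step you single out is indeed legitimate, since $k$, $diam(G)$ and the ceiling are all integers.
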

\begin{proof}
Let $P$ be the path of length $diam(G)$. Now we provide a coloring with $m+\lceil \log_2{diam(G)+1}\rceil-diam(G)$ colors: assign the edges of $P$ with $\lceil \log_2{diam(G)+1}\rceil$ colors to make $P$ strongly conflict-free connected; assign each of the remaining $m-diam(G)$ edges a fresh color. Clearly, $G$ is strongly conflict-free connected. Since $\mathit{scfc}(G)= m-k$, then we have that $m-k\leq m+\lceil\log_2{(diam(G)+1)}-diam(G)\rceil$. Since $\lceil\log_2{(diam(G)+1)}-diam(G)\rceil$ is monotone decreasing, then $-diam(G)+\lceil\log_2{(diam(G_t)+1)}\rceil\geq -k$. Consequently, $diam(G)-\lceil\log_2(diam(G)+1)\rceil\leq k$.
\end{proof}

\begin{lem}\label{logC-C}
Let $G$ be a connected graph with size $m$ and $\mathit{scfc}(G)=m-k$, and let $C$ be a cycle of $G$. Then
\begin{center}
$|C|-\lceil\log_2|C|\rceil\leq k$.
\end{center}
\end{lem}
\begin{proof}
It is clear that $\mathit{scfc}(C)\leq \lceil\log_2|C|\rceil$ by Theorem \ref{cycle}. Then we give a coloring as follows: assign the edges of $C$ with $\lceil\log_2|C|\rceil$ colors to make $C$ strongly conflict-free connected and assign each of the remaining $m-|C|$ edges with a fresh color. We can easily verify that the coloring is a strong conflict-free coloring, a contradiction. Consequently, $|C|-\lceil\log_2|C|\rceil\leq k$.
\end{proof}

\begin{thm}\label{(3)}
Let $G$ be a nontrivial connected graph of size $m$. Then
$\mathit{scfc}(G)=m$ if and only if $G\cong S_m$.
\end{thm}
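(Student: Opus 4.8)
$\mathit{scfc}(G) = m$ if and only if $G \cong S_m$.

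The plan is to prove the two directions separately, with the forward direction being the substantive one. For the easy direction, if $G \cong S_m$, then $G$ is a star with $m$ edges, and Theorem \ref{scfc tree} gives $\mathit{scfc}(S_m) = m$ immediately.

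For the converse, I would assume $\mathit{scfc}(G) = m$ and aim to show $G$ must be a star. The key tool is Observation \ref{observation} with $k = 0$: since $\mathit{scfc}(G) = m = |E(G)|$, the graph $G$ cannot contain a copy of any connected graph $H$ with $\mathit{scfc}(H) \leq |E(H)| - 1$. So the strategy is to identify forbidden subgraphs. First I would argue that $G$ must be a tree. If $G$ contained any cycle $C$, then by Theorem \ref{cycle} we have $\mathit{scfc}(C) \leq \lceil \log_2 |C| \rceil$, and since $|C| \geq 3$ forces $\lceil \log_2 |C| \rceil \leq |C| - 1$ (indeed $\lceil \log_2 3 \rceil = 2 = 3-1$, and the gap only grows), $C$ is a forbidden subgraph, contradiction. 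Hence $G$ is a tree with $m$ edges. Alternatively, one can invoke Lemma \ref{logC-C} with $k=0$: any cycle would need $|C| - \lceil \log_2 |C| \rceil \leq 0$, impossible for $|C| \geq 3$.

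Having reduced to trees, I would next show the tree has diameter at most $2$, which (together with connectedness) forces it to be a star. The cleanest route is Lemma \ref{logdiam-diam} with $k = 0$: it yields $diam(G) - \lceil \log_2(diam(G)+1) \rceil \leq 0$, i.e. $diam(G) \leq \lceil \log_2(diam(G)+1) \rceil$. For $diam(G) = 3$ this reads $3 \leq \lceil \log_2 4 \rceil = 2$, which fails, and larger diameters fail even more badly, so $diam(G) \leq 2$. A connected tree of diameter at most $2$ is exactly a star $S_m$. This completes the characterization.

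The main obstacle, such as it is, lies in making the forbidden-subgraph bookkeeping airtight rather than in any deep idea: one must confirm that every cycle and every path of length $3$ genuinely satisfies the hypothesis $\mathit{scfc}(\cdot) \leq |E(\cdot)| - 1$ of Observation \ref{observation}, and verify the monotonicity claims about $x - \lceil \log_2(x+1) \rceil$ and $|C| - \lceil \log_2 |C| \rceil$ being nonnegative (in fact strictly positive) for the relevant ranges. These are the same monotonicity facts already used in the proofs of Lemmas \ref{logdiam-diam} and \ref{logC-C}, so they can be cited rather than reproven. I expect the whole argument to be short, leaning entirely on the two lemmas and the observation established just above.
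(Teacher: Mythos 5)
Your proof is correct and takes essentially the same approach as the paper: it rules out cycles using the $\lceil\log_2|C|\rceil$-coloring of a cycle and then forces $diam(G)\leq 2$ using the $\lceil\log_2(diam(G)+1)\rceil$-coloring of a diametral path, concluding that a tree of diameter at most $2$ is a star. The only difference is presentational --- you cite Lemma \ref{logdiam-diam} and Lemma \ref{logC-C} with $k=0$, while the paper inlines the very same coloring constructions.
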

\begin{proof}
Suppose that $\mathit{scfc}(G)=m$. Assume that there is a cycle $C$ in $G$. Then $\mathit{scfc}(G)\leq m-|C|+\lceil\log_2|C|\rceil$ by Corollary \ref{onecycle}, which is a contradiction. Hence, $G$ is a tree. Let $u$ and $v$ be two vertices with $d_G(u,v)\geq3$ in $G$.  Assume that $P$ is a path of length $d_G(u,v)$ between $u$ and $v$. Then we provide a coloring for $G$: assign the edges of $P$ with $\lceil\log_2(d_G(u,v)+1)\rceil$ colors to make $P$ strongly conflict-free connected; assign each of the remaining edges with a fresh color. Clearly, $G$ is strongly conflict-free connected by the edge-coloring with $m-d_G(u,v)+\lceil\log_2(d_G(u,v)+1)\rceil$ colors, a contradiction. Thus, $G\cong S_m$.
\end{proof}

Before proving the theorem below, we define some graph-classes. Let $S_m$ be a star with $m \ (\geq 2)$ edges and let $u$ be a leaf of $S_m$. We define a graph by $\Gamma_{m+1}=(V(S)\cup\{v\},E(S)\cup\{uv\})$ and we denote by $P_n$ a path of length $n$.

\begin{lem}\label{gamma}
If $G\in\{P_3, P_4, \Gamma_m\}$, then $\mathit{scfc}(G)=m-1$.
\end{lem}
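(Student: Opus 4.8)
The plan is to treat the three graphs separately, exploiting the fact that each of them is a tree, so that Theorem \ref{scfc tree} gives $\mathit{scfc}=\mathit{cfc}$ and, by Theorem \ref{diameter}, every strong conflict-free connection coloring must be a proper edge-coloring, whence $\mathit{scfc}(T)\ge\Delta(T)$. Throughout I read $m$ as the number of edges of $G$, and I recall that in this section $P_n$ denotes the path of \emph{length} $n$, i.e. the path on $n+1$ vertices. The two sporadic cases are immediate: the graph $P_3$ is the path on $4$ vertices, so Theorem \ref{scfc tree} yields $\mathit{scfc}(P_3)=\lceil\log_2 4\rceil=2=m-1$ since $m=3$; likewise $P_4$ is the path on $5$ vertices, so $\mathit{scfc}(P_4)=\lceil\log_2 5\rceil=3=m-1$ since $m=4$. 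Thus only $\Gamma_m$ requires genuine work.

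For $\Gamma_m$ I would first record its structure from the definition: it consists of a star $S_{m-1}$ with centre $c$ and leaves $u=v_1,v_2,\dots,v_{m-1}$, together with a pendant vertex $v$ joined to the leaf $u$; it has $m$ edges, and $\Delta(\Gamma_m)=\deg(c)=m-1$ (note $m\ge 3$ by the definition of $\Gamma_{m+1}$). The lower bound is then free: since $\Gamma_m$ is a tree, Theorem \ref{diameter} gives $\mathit{scfc}(\Gamma_m)\ge\Delta(\Gamma_m)=m-1$.

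For the matching upper bound I would exhibit an explicit coloring with $m-1$ colors and verify it is strongly conflict-free. Assign the distinct colors $1,2,\dots,m-1$ to the star edges $cv_1,cv_2,\dots,cv_{m-1}$ respectively, and color the pendant edge $uv$ with color $2$ (any color different from the color $1$ of $cu$ works; here $m-1\ge 2$ guarantees such a color exists). I would then check the shortest path for each pair of vertices: any two star leaves $v_i,v_j$ are joined by the length-$2$ path $v_i c v_j$ with distinct colors; $c$ and $v$ are joined by $cuv$ with colors $1,2$; and $u,v$ are adjacent. The only delicate pairs are $v$ and a leaf $v_j$ with $j\ge 2$, joined by the length-$3$ path $v\,u\,c\,v_j$ carrying colors $2,1,\text{(color of }cv_j)$: for $j\ge 3$ all three colors are distinct, while for $j=2$ the color sequence is $2,1,2$, in which color $1$ appears exactly once, so the path is conflict-free. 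This gives $\mathit{scfc}(\Gamma_m)\le m-1$, and combined with the lower bound, $\mathit{scfc}(\Gamma_m)=m-1$.

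The step I expect to be the main obstacle is precisely this last verification for $\Gamma_m$: one must choose the reused color on the pendant edge carefully so that the length-$3$ paths from $v$ to the non-anchor leaves remain conflict-free. The key observation making it work is that reusing the color of a star edge $cv_2$ (rather than the color of $cu$) forces color $1$ to survive as the unique color on the critical path $v\,u\,c\,v_2$; had we instead reused the color of $cu$, the path $vuc$ to the centre would already fail. Everything else reduces to routine bookkeeping over the few pairs of vertices.
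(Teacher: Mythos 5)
Your proposal is correct and follows essentially the same route as the paper: the sporadic cases $P_3$, $P_4$ via Theorem \ref{scfc tree}, the lower bound $\mathit{scfc}(\Gamma_m)\geq\Delta(\Gamma_m)=m-1$ via Theorem \ref{diameter}, and the upper bound via the coloring that gives every star edge a fresh color and reuses on $uv$ a color other than that of the edge incident with $u$. Your explicit verification of the length-$3$ paths from $v$ is just a more careful writing-out of the step the paper dismisses with ``clearly.''
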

\begin{proof}
It is clear that $\mathit{scfc}(P_3)=2$ and $\mathit{scfc}(P_4)=3$ by Theorem \ref{scfc tree}, and $\mathit{scfc}(\Gamma_m)\geq\Delta(\Gamma_m)=m-1$ by Theorem \ref{diameter}. Then for the upper bound we give a coloring: assign each of the $m-1$ edges of $S_{m-1}$ with a fresh color and choose one color from the used colors except the color assigned to the edge incident with $u$. Clearly, $G$ is strongly conflict-free connected. Consequently, $\mathit{scfc}(\Gamma_m)=m-1$.
\end{proof}

\begin{thm}\label{(2)}
Let $G$ be a connected graph of size $m$. Then
$\mathit{scfc}(G)=m-1$ if and only if $G\in$ $\{P_3$, $P_4$, $\Gamma_m\}$.
\end{thm}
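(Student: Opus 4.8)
Since the ``if'' direction is exactly Lemma~\ref{gamma}, the plan is to prove the ``only if'' direction: if $\mathit{scfc}(G)=m-1$, then $G\in\{P_3,P_4,\Gamma_m\}$. I take $k=1$ throughout in Observation~\ref{observation} and in Lemmas~\ref{logdiam-diam} and~\ref{logC-C}. The first step is to force $G$ to be a tree of diameter $3$ or $4$. By Lemma~\ref{logC-C}, every cycle $C$ of $G$ satisfies $|C|-\lceil\log_2|C|\rceil\le 1$, which allows only $|C|=3$; and since $\mathit{scfc}(K_3)=1=|E(K_3)|-2$, Observation~\ref{observation} forbids a triangle. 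Hence $G$ is acyclic, i.e.\ a tree. Lemma~\ref{logdiam-diam} gives $diam(G)-\lceil\log_2(diam(G)+1)\rceil\le 1$, so $diam(G)\le 4$; moreover $diam(G)\le 2$ is impossible, because then $G$ is either $K_2$ (with $\mathit{scfc}=1$) or a star (with $\mathit{scfc}=m$ by Theorem~\ref{(3)}). Thus $diam(G)\in\{3,4\}$.

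Suppose $diam(G)=3$. Then $G$ is a double star: a central edge $xy$ carrying $a\ge 1$ leaves at $x$ and $b\ge 1$ leaves at $y$, so that $m=a+b+1$. Let $D$ be the double star with $a=b=2$. As $diam(D)=3$, every path of $D$ has length at most $3$, so any proper edge-coloring of $D$ is already strongly conflict-free; hence $\mathit{scfc}(D)=\Delta(D)=3=|E(D)|-2$. If both $a\ge 2$ and $b\ge 2$, then $G$ contains a copy of $D$, and Observation~\ref{observation} yields $\mathit{scfc}(G)\le m-2$, a contradiction. Therefore $\min\{a,b\}=1$, and a double star with one side consisting of a single leaf is precisely $P_3$ (when $a=b=1$) or $\Gamma_m$ (when $\max\{a,b\}\ge 2$).

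The case $diam(G)=4$ is the main obstacle. Such a $G$ has a unique center $c$, and every other vertex is at distance $1$ or $2$ from $c$; thus $G$ is determined by $c$, its neighbours $u_1,\dots,u_d$, and the numbers $s_i\ge 0$ of leaves hanging at each $u_i$, where at least two of the $s_i$ are positive. If $G\ne P_4$ then $m\ge 5$, and I would prove that $G$ then contains one of two small trees on five edges: the tree $B$ formed from a path $u_1cu_2$ by attaching two leaves at $u_1$ and one leaf at $u_2$, or the tree $T_0$ consisting of $c$ adjacent to $u_1,u_2,u_3$ with one leaf attached to each of $u_1$ and $u_2$. Indeed, if some branching neighbour $u_i$ has $s_i\ge 2$, a copy of $B$ arises from $u_i$, its two leaves, $c$, and a second branching neighbour together with one of its leaves; otherwise every branching neighbour carries exactly one leaf, and then $m\ge 5$ forces $d\ge 3$, so a third neighbour of $c$ produces a copy of $T_0$. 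Since $B$ and $T_0$ have diameter $4$, a short check of the (few) length-$4$ paths shows that a suitable $3$-coloring is strongly conflict-free, while $\mathit{scfc}\ge\Delta=3$ by Theorem~\ref{diameter}; hence $\mathit{scfc}(B)=\mathit{scfc}(T_0)=3=5-2$. Observation~\ref{observation} then gives $\mathit{scfc}(G)\le m-2$, a contradiction, so $diam(G)=4$ forces $G=P_4$.

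The genuinely delicate part is the diameter-$4$ analysis: one must correctly isolate $B$ and $T_0$ as the only two ``critical'' five-edge trees of diameter $4$ and verify that every larger diameter-$4$ tree contains one of them. Once this structural dichotomy is in place, Observation~\ref{observation} disposes of all remaining cases, and together with the diameter-$3$ discussion and Lemma~\ref{gamma} the characterization $\{P_3,P_4,\Gamma_m\}$ follows.
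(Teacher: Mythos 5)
Your proposal is correct and follows essentially the same route as the paper: reduce to trees via the cycle bound, cap the diameter by Lemma~\ref{logdiam-diam}, and then eliminate everything except $P_3$, $P_4$ and $\Gamma_m$ by exhibiting small subtrees $H$ with $\mathit{scfc}(H)\le |E(H)|-2$ and invoking Observation~\ref{observation}. The only (harmless) difference is your choice of obstructions in the diameter-$4$ case --- the two five-edge trees $B$ and $T_0$ obtained from the center decomposition, where the paper uses the single tree $P_4$ plus one pendant edge at an internal vertex.
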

\begin{proof}
 The necessity holds by Lemma \ref{gamma}. On the contrary, suppose that $\mathit{scfc}(G)=m-1$. We first claim that $G$ is a tree. Assume that $G$ is not a tree. Let $C$ $(|C|\geq 3)$ be a cycle of $G$. We have that $\mathit{scfc}(C)\leq |C|-2$ by Corollary \ref{onecycle}, and it is not true by Observation \ref{observation}.

Suppose that $diam(G)\geq 5$ in $G$. Clearly, $diam(G)-\lceil\log_2(diam(G)+1)\rceil>1$ by Lemma \ref{logdiam-diam}, a contradiction. So $diam(G)\leq 4$. Suppose $diam(G)=4$. Let $P_4=v_1v_2v_3v_4v_5$ be a path with $\mathit{scfc}(P_4)=3$. If $G=P_4$, then it is true. Assume that there is another vertex $w$ adjacent to $v_i$ of $P_4$, denote this structure by $R$. It is clear to see that $R$ can be colored by three colors to make it strongly conflict-free connected. Thus, $\mathit{scfc}(R)=|E(R)|-2$, and $R\nsubseteq G$ by Observation \ref{observation}. Consequently, $G\cong P_4$.

Suppose that $diam(G)=3$. Let $P_3=v_1v_2v_3v_4$ be a path with $\mathit{scfc}(P_3)=2$. If $G=P_3$, then it is true. Assume that there are two vertices $x,y$ adjacent to $v_2$, $v_3$ of $P_3$, respectively, denote by $L$ this structure. It is easy to check that $\mathit{scfc}(L)\leq 3$. So $L\nsubseteq G$ by Observation \ref{observation}. Without loss of generality, let $d(v_2)=t$ $(\geq 3)$ and $d(v_3)=2$. Obversely, $\mathit{scfc}(G)\geq t$ by Theorem \ref{diameter}. Now we assign each of the edges incident with $v_2$ by a fresh color and assign the remaining edge $e$ by the color used on some edge not adjacent to $e$. Clearly, $G$ is strongly conflict-free connected. So, $G\in\{P_3,\Gamma_m\}$. Suppose that $diam(G)=2$. Then $G\cong S_n$ with $\mathit{scfc}(G)=m$, a contradiction. Completing the proof.
\end{proof}

\begin{figure}[!htb]
\centering
\includegraphics[width=0.5\textwidth]{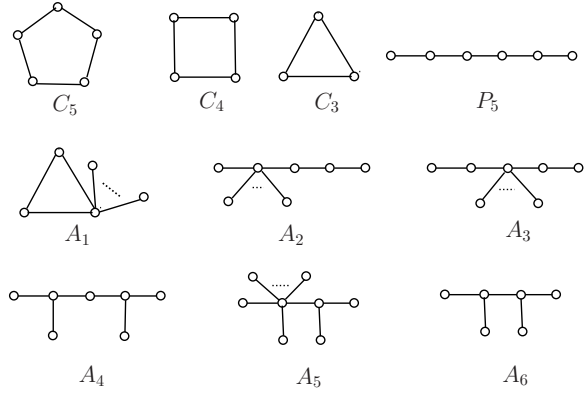}
\caption{Graphs with $\mathit{scfc}(G)=m-2$ }\label{1}
\end{figure}
\begin{thm}\label{thmm-2}
$\mathit{scfc}(G)=m-2$ if and only if $G\in\{C_3,C_4,C_5,P_5,A_1,A_2,\cdots,A_6\}$.
\end{thm}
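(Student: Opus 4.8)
The plan is to prove both directions, with the bulk of the work in the forward (necessity) direction. For the sufficiency direction, I would simply verify that each graph in the list $\{C_3,C_4,C_5,P_5,A_1,\dots,A_6\}$ has $\mathit{scfc}(G)=m-2$: the cycles are handled directly by Theorem \ref{cycle} (noting $\lceil\log_2 3\rceil-1=1=3-2$, $\lceil\log_2 4\rceil-1=1=4-2$, and $\lceil\log_2 5\rceil=3=5-2$ after checking the exact value for each), $P_5$ by Theorem \ref{scfc tree} (since $\mathit{scfc}(P_5)=\lceil\log_2 6\rceil=3=5-2$), and each $A_i$ from Figure \ref{1} by exhibiting an explicit coloring achieving $m-2$ colors together with a matching lower bound from Theorem \ref{diameter} or Observation \ref{observation}.

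For the necessity direction, assume $\mathit{scfc}(G)=m-2$, so here $k=2$ in the language of the earlier lemmas. The strategy is to systematically bound the structural parameters of $G$ using the forbidden-subgraph machinery of Observation \ref{observation} together with Lemmas \ref{logdiam-diam} and \ref{logC-C}. First I would apply Lemma \ref{logC-C} with $k=2$: any cycle $C$ must satisfy $|C|-\lceil\log_2|C|\rceil\leq 2$, which forces $|C|\leq 5$, so $G$ contains no cycle of length $\geq 6$. Similarly, Lemma \ref{logdiam-diam} with $k=2$ gives $diam(G)-\lceil\log_2(diam(G)+1)\rceil\leq 2$, forcing $diam(G)\leq 5$. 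Next I would split into the tree case and the non-tree case.

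In the tree case, $G$ has $m$ edges and the argument parallels Theorem \ref{(2)}: using $diam(G)\leq 5$ together with the degree bound $\mathit{scfc}(G)\geq\Delta(G)$ from Theorem \ref{diameter}, and repeatedly invoking Observation \ref{observation} to forbid any small tree $H$ with $\mathit{scfc}(H)\leq|E(H)|-3$, I would show the only surviving tree is $P_5$. The key is that attaching any extra edge to a long enough path or high-degree vertex creates a forbidden subgraph whose $\mathit{scfc}$ is too small relative to its size. In the non-tree case, $G$ contains a cycle $C$ with $3\leq|C|\leq 5$; I would analyze how many edges lie outside $C$ (namely $m-|C|$) and use the fact that $\mathit{scfc}(C)$ is already close to $|C|-2$, so at most one or two pendant/chord edges can be attached before Observation \ref{observation} is violated. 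This case analysis, organized by $|C|$, should produce exactly $C_3,C_4,C_5$ and the graphs $A_1,\dots,A_6$.

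The main obstacle I anticipate is the non-tree case, specifically the bookkeeping of all the ways to attach a bounded number of extra edges to a short cycle while keeping $\mathit{scfc}$ exactly $m-2$ and not dropping it by producing a forbidden configuration. The delicate point is that Observation \ref{observation} only forbids subgraphs $H$ with $\mathit{scfc}(H)\leq|E(H)|-3$, so for each candidate attachment I must compute $\mathit{scfc}$ of the resulting small graph and check whether it sits at $|E|-2$ (allowed, contributing to the list) or at most $|E|-3$ (forbidden). Getting this casework exhaustive and non-redundant, and matching it precisely to the six graphs $A_1,\dots,A_6$ in Figure \ref{1}, is where the care is required; the cycle-length and diameter bounds above keep the list finite, but verifying each surviving case needs an explicit coloring for the upper bound and a subgraph or $\Delta$ argument for the lower bound.
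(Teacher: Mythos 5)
Your overall skeleton --- bounding $diam(G)$ and cycle lengths via Lemmas \ref{logdiam-diam} and \ref{logC-C}, splitting into the tree and non-tree cases, and using Observation \ref{observation} as forbidden-subgraph machinery with explicit colorings for the upper bounds and $\Delta$ for the lower bounds --- is exactly the paper's. But there is a concrete error in how you distribute the target graphs between the two cases. You assert that in the tree case ``the only surviving tree is $P_5$'' and that the non-tree case should produce $C_3,C_4,C_5$ together with all of $A_1,\dots,A_6$. In fact only $A_1$ (a triangle with a pendant edge) lives in the non-tree case; the graphs $A_2,\dots,A_6$ are trees of diameter $3$ or $4$, several of them infinite families, such as the spider obtained from the path $v_1v_2v_3v_4v_5$ by attaching $l$ pendant edges at $v_2$, which has $m=l+4$ edges, $\Delta=l+2$, and $\mathit{scfc}=\Delta=m-2$. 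Your non-tree analysis, which only considers attaching a bounded number of chords or pendant edges to a cycle of length at most $5$, can never generate these trees, and your tree analysis explicitly excludes them; so the plan as written yields a false characterization, missing five of the nine graph classes.

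The reason the tree case does not collapse to $P_5$ is the same phenomenon already visible in Theorem \ref{(2)}, whose answer includes the infinite family $\Gamma_m$: the lower bound $\mathit{scfc}(T)\ge\Delta(T)$ means a tree with one vertex of very large degree and a few extra edges can sit exactly at $m-2$, because the edges at the center already consume $\Delta$ colors and only two further edges can reuse colors without destroying some shortest conflict-free path. So in the tree case you must run the diameter-$4$ and diameter-$3$ subcases separately, determine which attachments to the paths $v_1v_2v_3v_4v_5$ and $v_1v_2v_3v_4$ produce subgraphs $H$ with $\mathit{scfc}(H)=|E(H)|-2$ (allowed, and stable under adding further pendant edges at the high-degree vertex) versus $\mathit{scfc}(H)\le|E(H)|-3$ (forbidden by Observation \ref{observation}), and thereby recover $A_2,\dots,A_6$. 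The rest of your plan --- the sufficiency checks and the cycle-length bookkeeping that yields $C_3,C_4,C_5$ and $A_1$ --- is sound and matches the paper.
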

\begin{proof}
Suppose that $\mathit{scfc}(G)=m-2$. Then $diam(G)\leq 5$ by Lemma \ref{logdiam-diam}. Let $C$ be a cycle of $G$. Then $|C|\leq 5$ by Lemma \ref{logC-C}. We now distinguish the following cases (The graphs are demonstrated in Figure \ref{1}).

\emph{Case 1}: $G$ is a tree.

$(i)$  Suppose that $diam(G)=5$. Let $P_5=v_1v_2v_3v_4v_5v_6$ be a path. If $G=P_5$, then it is true by Theorem \ref{scfc tree}. If $G\neq P_5$, then we construct a graph $H$ by adding an edge $uv_i$ $(i\in\{2,3,4,5\})$ to $P_5$. It is easy to get $\mathit{scfc}(H)\leq 3$. Hence, $H\nsubseteq G$ by Observation \ref{observation}. Consequently, $G\cong P_5$.

$(ii)$  Suppose that $diam(G)=4$. Let $P_4=v_1v_2v_3v_4v_5$ be a path for which $\mathit{scfc}(P_4)=3$, and thus, $G\neq P_4$. We construct a graph $H_1$ by adding two vertices $u_1$, $u_2$ and connecting them to $v_2$ and $v_3$, respectively. Then $H_1\nsubseteq G$ by Observation \ref{observation} since $\mathit{scfc}(H_1)=|E(H_1)|-3$. We construct another graph $H_2$ by adding a vertex $u_1$ and connecting it to $v_2$ and adding another vertex $u_2$ and connecting it to $v_4$, which means that $\mathit{scfc}(H_2)=|E(H_2)|-2$. We construct $H_2'$ by adding an edge $w_1v_2$ in $P_4$. We have that $\mathit{scfc}(H_2')=|E(H_2')|-2$ by easy calculation. Let $P=u_1u_2u_3$ be a path. We construct a graph $H_3$ by identifying $u_1$ with $v_3$ of $P_4$ (if $u_1$ is identified with other vertices of $P_4$, then it contradicts that $diam(G)=4$). Clearly, $\mathit{scfc}(H_3)=|E(H_3)|-3$. Thus, $H_3\nsubseteq G$ by Observation \ref{observation}. Let $x_1x_2$ be an edge and we construct $H_4$ by identifying $x_1$ with $v_3$ of $H_2$. Clearly, $\mathit{scfc}(H_4)=|E(H_4)|-3$. Thus, $H_4\nsubseteq G$ by Observation \ref{observation}. Consequently, $G$ can contain $H_2$ and $H_2'$ but not $H_3$ and $H_4$.

Now we show that $G\in \{A_2,A_3,A_4\}$. Clearly, it is true for $G=A_4=H_2$. $A_2$ is constructed by identifying one end of each of $l$ new edges with $v_2$ in $P_4$. Clearly, $\mathit{scfc}(A_2)\geq \Delta(A_2)$ by Theorem \ref{diameter}. Then we give a coloring of $A_2$: first, assign each of the $\Delta(A_2)$ edges incident with $v_2$ by a fresh color, and assign the remaining two edges with used colors except the color used on $v_2v_3$. Clearly, it is a strong conflict-free connection coloring. So $\mathit{scfc}(A_2)=|E(A_2)|-2$. Similarly, $\mathit{scfc}(A_3)=|E(A_3)|-2$. Consequently, $G\in \{A_2,A_3,A_4\}$.

$(iii)$  Suppose that $diam(G)=3$. By above similar manner, we have that $\mathit{scfc}(G)=m-2$ if and only if $G\in\{A_5,A_6\}$.
Suppose $diam(G)=2$. Then the only graph is a star $S_m$, which is a contradiction with $\mathit{scfc}(S_m)=m$.

\emph{Case 2}: There is at least one cycle $C$ in $G$.

$(i)$ Suppose that $C=v_1v_2v_3v_4v_5v_1$. If $G=C$, then it is true by $\mathit{scfc}(C)=3$. Assume that there is an edge $v_iv_j$ $(i,j\in[5])$ in $G$. Then there is a subgraph of $G$, say $C_5'=C+v_iv_j$. Then $C_5'\nsubseteq G$ by Observation \ref{observation} since $\mathit{scfc}(C_5')=3$. We construct the graph $C^+_5$ by adding $u$ and connecting it to vertex $v_i$ $(i\in[5])$ of $C_5$. Obviously, $C^+_5\nsubseteq G$ by Observation \ref{observation} since $\mathit{scfc}(C^+_5)=3$. So $G\cong C_5$.

$(ii)$  Suppose that $C=v_1v_2v_3v_4v_1$. We construct the graph $C_4'$ by adding one edge $v_2v_3$ in $C_4$. Then $C_4'\nsubseteq G$ by Observation \ref{observation} since $\mathit{scfc}(C_4')=2$. We then construct another graph $C^+_4$ by adding one vertex $w$ and connecting it to $v_i$ $(i\in[4])$. Obviously, $C^+_4\nsubseteq G$. Consequently, $G\cong C_4$.

$(iii)$ \ Suppose that $C=v_1v_2v_3v_1$. Let $P=u_1u_2u_3$ be a path. We construct a graph by identifying $u_1$ with $v_i$ $(i\in[3])$, denote it by $H_1$. Clearly, $H_1\nsubseteq G$ by Observation \ref{observation} since $\mathit{scfc}(H_1)\leq2$. Let $P'=u_1u_2$ and $P''=w_1w_2$ be two paths. We construct the graph $H_2$ by identifying $u_1$ with $v_i$ and identifying $w_1$ with $v_j$ $(i,j\in[3]$ and $i\neq j)$. Clearly, $\mathit{scfc}(H_2)=2$. So $H_1\nsubseteq G$ by Observation \ref{observation}. Finally we construct $H_3$ by identifying one vertex $v_i$ with $u_1$ of $P'$. Clearly, $\mathit{scfc}(H_3)=2$. Therefore, $G$ does not contain $H_1$ and $H_2$ but $G$ contain $H_3$. Clearly, the only graph class must be $A_1$ and $\mathit{scfc}(A_1)=|E(A_1)|-2$. Consequently, $G\in\{C_3,A_1\}$.
\end{proof}

\begin{figure}[!htb]
\centering
\includegraphics[width=0.5\textwidth]{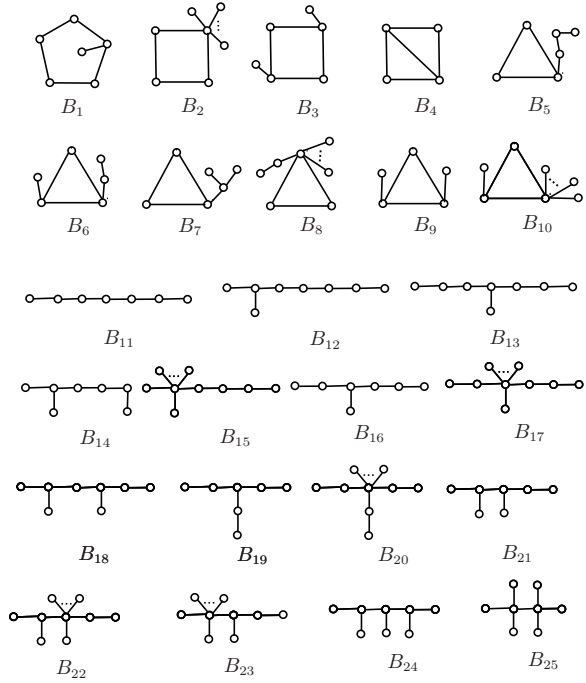}
\caption{Graphs with $\mathit{scfc}(G)=m-3$ }\label{2}
\end{figure}

\begin{lem}\label{lemmam-3}
Let $G$ be a connected graph of size $m$. If $G\in\{B_1,B_2,\cdots,B_{25}\}$, then $\mathit{scfc}(G)=m-3$.
\end{lem}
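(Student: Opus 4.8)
The plan is to verify, for each of the twenty-five graphs $B_1,\dots,B_{25}$ drawn in Figure \ref{2}, the two inequalities $\mathit{scfc}(B_i)\leq m-3$ and $\mathit{scfc}(B_i)\geq m-3$ separately, following the template already used in Lemma \ref{gamma} and in the proof of Theorem \ref{thmm-2}. This is a finite case analysis: by Lemma \ref{logdiam-diam} and Lemma \ref{logC-C}, any graph with $\mathit{scfc}=m-3$ must satisfy $diam(G)-\lceil\log_2(diam(G)+1)\rceil\leq 3$ and $|C|-\lceil\log_2|C|\rceil\leq 3$ for every cycle $C$, so each $B_i$ has diameter at most $6$ and every cycle has length at most $6$; since $\mathit{scfc}(C_6)=2$ already forces a saving of four, in fact only triangles and cycles of length $4$ or $5$ can occur. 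This restricts the graphs to a short finite list on which the two bounds can be checked one by one.

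For the upper bound I would isolate in each $B_i$ an \emph{economical core}, a bounded-size subgraph on which the trivial all-distinct coloring can be improved by exactly three colors. The available building blocks are a path of length $6$, which has $\mathit{scfc}=3$ and thus saves three colors by Theorem \ref{scfc tree}; a triangle or a $4$- or $5$-cycle, each saving two colors by Theorem \ref{cycle} and Theorem \ref{G_t}; and a pendant subpath of length $3$ or $4$, saving one more. I would color the core by its optimal strong conflict-free coloring, guaranteed by the cited results, and give every remaining edge a private fresh color, exactly as in Corollary \ref{onecycle}. The resulting coloring uses at most $m-3$ colors, and one verifies that between each pair $u,v$ some geodesic is conflict-free, which is immediate along the fresh-colored bridges and follows from the core coloring otherwise.

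For the lower bound I would treat the tree members and the cyclic members separately. For a tree $B_i$, a strong conflict-free coloring is necessarily a proper edge-coloring, so Theorem \ref{diameter} gives $\mathit{scfc}(B_i)\geq\max\{\lceil\log_2(diam(B_i)+1)\rceil,\Delta(B_i)\}$, and for each tree on the list this maximum already equals $m-3$. For a $B_i$ containing a cycle I would argue directly, in the spirit of the claims in the proof of Theorem \ref{G_t}: the only colors that may safely be repeated are those forced to coincide along a single cycle or a single long geodesic, and since these economical fragments are small and edge-disjoint in $B_i$, no valid coloring can repeat more than three colors, so at least $m-3$ colors are needed.

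The main obstacle I anticipate is bookkeeping together with the tightness of the lower bound, rather than any single deep idea. There are twenty-five graphs, and for each the economical coloring must be displayed so that it respects the \emph{strong} (shortest-path) requirement and not merely conflict-free connectivity; this is where the placement of the repeated colors is delicate, since repeating a color on two edges of one geodesic can destroy its conflict-free property. Showing that a fourth color can never be saved is the harder half: it amounts to checking that the triangles, short cycles and diametral paths present in a given $B_i$ cannot be exploited at the same time, and here the additivity of the savings in Theorem \ref{G_t} and Corollary \ref{onecycle}, together with the edge-disjointness of the fragments, are exactly what prevents over-saving. I would organize the twenty-five verifications by diameter and by longest cycle length to keep the casework systematic.
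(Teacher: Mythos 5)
Your overall strategy---a finite, graph-by-graph verification, with an economical core colored optimally and every remaining edge given a private fresh color for the upper bound, and a separate lower-bound check---is essentially the paper's own approach (the paper's proof is in fact terser, disposing of most of the $B_i$ with ``we can easily check''). The upper-bound half of your plan is sound. The gap is in your lower bound for the tree members: you assert that Theorem \ref{diameter} gives $\mathit{scfc}(B_i)\geq\max\{\lceil\log_2(diam(B_i)+1)\rceil,\Delta(B_i)\}$ and that ``for each tree on the list this maximum already equals $m-3$.'' That is provably false for the diameter-$6$ trees $B_{12},B_{13}$ (indeed for every diameter-$6$ tree other than $P_6$): a diametral path contributes $6$ edges, at most $2$ of which meet any fixed vertex, so $\Delta\leq m-4$, while $\lceil\log_2 7\rceil=3$; hence the bound of Theorem \ref{diameter} never exceeds $m-4$ for such trees, one short of what you need.

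Concretely, take the broom obtained from $P_6=v_0v_1\cdots v_6$ by attaching a pendant vertex $u$ at $v_1$: here $m=7$, $\Delta=3$, and Theorem \ref{diameter} gives only $3$, yet $\mathit{scfc}=4=m-3$. Ruling out a $3$-coloring requires a bespoke argument: the two length-$6$ geodesics $v_0v_1\cdots v_6$ and $uv_1\cdots v_6$ share their last five edges but must begin with distinct colors (properness at $v_1$), and a count analysis of the color multiset on the shared segment shows that the two full paths and their initial subpaths cannot all be conflict-free with $3$ colors. This kind of elimination of an $(m-4)$-coloring is the genuinely hard half of the lemma for several of the $B_i$, and neither Theorem \ref{diameter} nor the informal ``the economical fragments are edge-disjoint, so no coloring can save more than three colors'' heuristic you propose for the cyclic members actually supplies it; savings are not additive over edge-disjoint fragments in general, so each lower bound must be argued individually (as the paper does, if only sketchily, for $B_8$, $B_{10}$ and $B_{15}$).
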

\begin{proof}
The graphs are demonstrated in Figure \ref{2}. For $G\in\{B_1,\cdots,B_7,B_9,B_{11},$ $\cdots,B_{14},B_{17},B_{18},B_{19},B_{21},B_{23},B_{24},B_{25}\}$, we can easily check that $\mathit{scfc}(G)=m-3$. For $G\in\{B_8,B_{10}\}$, there is a triangle in $B_8$ and $B_{10}$, respectively. Then clearly $\mathit{scfc}(G)\geq \Delta(G)-1$. We give a coloring for $B_8$: color the triangle by 1 and color each of the edges incident with the vertices of the triangle by a fresh color, and color the remaining edge by a color used on the edges not adjacent to it. Clearly, $\mathit{scfc}(B_8)=|E(B_8)|-3$. Similarly, $\mathit{scfc}(B_{10})=|E(B_{10})|-3$. For $B_{15}$, it can be obtained by identifying one leaf of $P_{3}$ with one leaf of $S_t$ $(t\geq 3)$. Then we have that $\mathit{scfc}(S_t)\geq t$. We give a coloring of $B_{15}$: color the edges of $S_t$ with $t$ colors and choose two colors used on two leaves of $S_t$ to color the remaining two edges. Clearly, it is a strong conflict-free connection coloring. Thus, $\mathit{scfc}(B_{15})=|E(B_{15})|-3$. Similarly, we can easily check that $\mathit{scfc}(G)=m-3$ for $G\in\{B_{20},B_{22},B_{23},B_{25}\}$.
\end{proof}

\begin{thm}\label{thmm-3}
Let $G$ be a connected graph with $m \ (m\geq4)$ edges. Then $\mathit{scfc}(G)=m-3$ if and only if $G\in\{B_1,B_2,\cdots,B_{25}\}$.
\end{thm}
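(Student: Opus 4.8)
The sufficiency is already supplied by Lemma~\ref{lemmam-3}, so the whole burden is the necessity: assuming $\mathit{scfc}(G)=m-3$, I must show that $G$ is one of $B_1,\dots,B_{25}$. The plan is to follow the template that produced Theorems~\ref{(2)} and~\ref{thmm-2}, squeezing $G$ from two directions. First, applying Lemma~\ref{logdiam-diam} with $k=3$ gives $diam(G)-\lceil\log_2(diam(G)+1)\rceil\le 3$; since the left-hand side equals $3$ at $diam(G)=6$ and jumps to $4$ at $diam(G)=7$, this forces $diam(G)\le 6$. Likewise, Lemma~\ref{logC-C} with $k=3$ bounds every cycle by $|C|-\lceil\log_2|C|\rceil\le 3$, i.e.\ $|C|\le 6$. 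Thus $G$ has bounded diameter and only short cycles, so the candidates form a finite list that can be enumerated.

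The engine driving the enumeration is Observation~\ref{observation}: $G$ can contain no subgraph $H$ with $\mathit{scfc}(H)\le |E(H)|-4$, for otherwise recoloring $H$ optimally and giving every other edge a fresh color would yield $\mathit{scfc}(G)\le m-4$. I would first dispose of the tree case. Here $\mathit{scfc}(G)=\mathit{cfc}(G)$ by Theorem~\ref{scfc tree}, and I would split according to $diam(G)\in\{2,3,4,5,6\}$: diameter $2$ gives a star with $\mathit{scfc}(G)=m$ (excluded), and for each remaining value I fix a diametral path $P$ and attach the extra edges in all essentially distinct ways, using Observation~\ref{observation} to kill every attachment that creates a forbidden subconfiguration (such as two pendant edges hung at internal vertices of a long path, which already pushes $\mathit{scfc}$ below $m-3$). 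The surviving trees are exactly the tree-type graphs among $B_1,\dots,B_{25}$.

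Next I would treat the case where $G$ contains a cycle, splitting on the cycle length $|C|\in\{3,4,5,6\}$. For $|C|\in\{4,5,6\}$ the cycle is essentially rigid: Observation~\ref{observation} forbids chords and all but a controlled number of pendant edges, since a chorded or heavily decorated long cycle has $\mathit{scfc}$ at most $|E|-4$. The triangle case is the richest, because a triangle costs only one color—Theorem~\ref{G_t} with $t=1$ already gives $\mathit{scfc}\le |E|-2$—so I would combine the $S_{m,t}$-type bound with Observation~\ref{observation} to limit how many pendants, short paths, and triangles can be attached while keeping $\mathit{scfc}(G)$ exactly $m-3$. Assembling the admissible decorated cycles yields the remaining graphs $B_i$, including the triangle-bearing ones such as $B_8$ and $B_{10}$ handled in Lemma~\ref{lemmam-3}.

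The genuine difficulty is not any single estimate but the completeness and non-redundancy of the case analysis: with a diameter-$6$ ceiling, cycles up to length $6$, and the delicate interaction between high-degree vertices, attached short paths, and triangles, there are many near-misses that must each be ruled out by exhibiting an explicit $(m-4)$-coloring through Observation~\ref{observation}. Organizing these subcases so that every admissible graph appears once and only once, and checking that the twenty-five pictures in Figure~\ref{2} are precisely this list, is where the real work lies.
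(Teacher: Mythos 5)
Your proposal follows essentially the same route as the paper: bound $diam(G)\le 6$ and $|C|\le 6$ via Lemmas~\ref{logdiam-diam} and~\ref{logC-C}, then enumerate by splitting into the tree case (organized by diameter) and the cyclic case (organized by cycle length), using Observation~\ref{observation} as the exclusion engine throughout. One small correction to your sketch: for $|C|=6$ the $6$-cycle is not merely ``rigid'' but excluded outright, since $\mathit{scfc}(C_6)=2=|E(C_6)|-4$, so Observation~\ref{observation} forbids any $6$-cycle from appearing in $G$ at all.
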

\begin{proof}
The sufficiency holds by Lemma \ref{lemmam-3}.
Now we consider the necessity. We first have $diam(G)\leq 6$ by Lemma \ref{logdiam-diam}. Let $C$ be a cycle in $G$. Then $|C|\leq 6$ by Lemma \ref{logC-C}. Then we consider the following two cases.

\textbf{Case 1.} Suppose that there is at least one cycle $C$ in $G$.

\emph{(i)}  Assume that $|C|=6$. Then $C\nsubseteq G$ by Observation \ref{observation} since $\mathit{scfc}(C)=2$. Thus, $|C|\leq 5$.

\emph{(ii)}  Assume that $|C|=5$. Let $C'$ be a graph by adding a chord to $C$. We can easily check that $\mathit{scfc}(C')=|E(C')|-4$. So, $C'\nsubseteq G$ by Observation \ref{observation}. We construct $C''$ by adding a leaf vertex to $C$, for which $\mathit{scfc}(C'')=|E(C'')|-3$. Then we construct $C'''$ by adding two leave vertices to $C$. But, $\mathit{scfc}(C''')=|E(C''')|-4$. Let $P$ be a path of length 2. We construct $\bar{C}$ by identifying a  vertex of $C$ with an end of $P$, for which $\mathit{scfc}(\bar{C})=|E(\bar{C})|-4$. Consequently, $G\cong C''=B_1$.

\emph{(iii)} \ Suppose that $C=v_1v_2v_3v_4v_1$. Let $P=u_1u_2u_3$ be a path. Then we construct $H_1$ by identifying $u_1$ in $P$ with $v_4$ in $C$. Clearly, $\mathit{scfc}(H_1)\leq2=|E(H_1)|-4$. Hence, $H_1\nsubseteq G$ by Observation \ref{observation}. Let $w_1w_2$ be an edge. We then construct $H_2$ by choosing arbitrarily $v_i$ $(i\in[4])$ and identifying $v_i$ with $w_1$. Then we get $\mathit{scfc}(H_2)=|E(H_2)|-3$. Let $s_1s_2$ be an edge. We construct $H_3$ by identifying $v_1$, $v_2$ of $C_4$ with $s_1$, $w_1$, respectively. Then clearly $\mathit{scfc}(H_3)=|E(H_3)|-4$=2, and thus $H_3\nsubseteq G$. We construct $H_4$ by identifying $s_1$ and $w_1$ with $v_1$ and $v_3$ of $C$, respectively. Clearly, $\mathit{scfc}(H_4)=|E(H_4)|-3$.

Hence, $G$ can contain the copies of $H_2$ and $H_4$ but not the copies of $H_1$ and $H_3$. Clearly, suppose that we construct $H_5$ by adding one pendant vertex to $v_1$ in $H_4$. Clearly, $\mathit{scfc}(H_5)=|E(H_5)|-4$. Then it does not hold for $H_5$. Obviously, $G\in\{B_2,B_3\}$. We construct $H_6$ by adding a chord to $C_4$. Then we have that $\mathit{scfc}(H_5)=|E(H_5)|-3$. At last, we construct $H_6$ by adding a leaf vertex to connect it to a vertex of $H_5$. Clearly, $\mathit{scfc}(H_6)=|E(H_6)|-4$. Consequently, $G\in\{B_2,B_3,B_4\}$.

\emph{(iv)}  Suppose that $C=v_1v_2v_3v_1$. Let $P=u_1u_2u_3u_4u_5$. Clearly, $\mathit{scfc}(P)=3$ and $\mathit{scfc}(C)=1$. We construct a graph $H_1$ by identifying $u_1$ with $v_1$. Clearly, the coloring by assigning each edge $e$ $\in E(C)$ and $u_3u_4$ with color 2 and assigning $v_1u_2$ and $u_4u_5$ with color 1 and assigning $u_2u_3$ with color 3 is a strong conflict-free connection coloring. So $\mathit{scfc}(H_1)\leq 3$. By Observation \ref{observation} $G$ does not contain any copy of $H_1$. Then we can use Observation \ref{observation} repeatedly, and eventually get that $G\in \{B_5,B_6,B_7,B_8,B_9,B_{10}\}$.

\textbf{Case 2.} Suppose that $G$ is a tree. By the same arguments, we know that $G\in\{B_{11},B_{12},B_{13}\}$ if $diam(G)=6$; $G\in \{B_{14},\cdots,B_{18}\}$ if $diam(G)=5$; $G\in \{B_{19},\cdots,B_{24}\}$ if $diam(G)=4$; $G=B_{25}$ if $diam(G)=3$. But $\mathit{scfc}(G)\leq m-1$ when $diam(G)\leq 2$ by Theorems \ref{(2)} and  \ref{(3)}.
\end{proof}

\section{ Cubic graphs with $\mathit{scfc}$-number 2}\label{cubic graphs}

In this section, we will characterize the cubic graphs $G$ with $\mathit{scfc}(G)=2$. We first discuss the relation between the strong conflict-free connection number and the strong proper connection number for cubic graphs.

We need the following definition.
\begin{df}
A \emph{forced 2-path} in a graph $G$ is a path $xyz$ such that $xz\notin E(G)$ and $xyz$ is the unique 2-path connecting $x$ and $z$.
A \emph{$k$-path} $P=u_0u_1\cdots u_k$ in a graph $G$ is called forced, if each 2-path $u_iu_{i+1}u_{i+2}$ is forced and $P$ is a path between $u_0$ and $u_k$, for $i=0,1,\cdots,k-2$.
A cycle of a graph $G$ is called a \emph{forced cycle} if any two successive edges of the cycle form a forced 2-path in $G$.
An edge $e$ in a graph $G$ is called a \emph{forced edge} if $e$ is not included in a cycle of length at most 4.
\end{df}

If $uv$ is a forced edge in $G$ and $vw$ is an edge adjacent to $uv$, then $uvw$ is a forced 2-path in $G$. The following two results follow directly from the definition.
\begin{lem}\label{path with forced}
Let $P=u_1u_2\cdots u_k$ be a forced path in $G$ with $\mathit{scfc}(G)=2$. Then the adjacent edges of $P$ are colored by distinct colors for every strong conflict-free connection coloring with 2 colors.
\end{lem}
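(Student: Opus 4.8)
The plan is to fix an arbitrary strong conflict-free connection coloring $c$ of $G$ that uses only the two available colors, say $\{1,2\}$, and to argue separately for each consecutive pair of edges along $P$. The adjacency relation I must control is the one between $e_i=u_iu_{i+1}$ and $e_{i+1}=u_{i+1}u_{i+2}$, which share the vertex $u_{i+1}$; these are precisely the pairs of adjacent edges of $P$, so it suffices to show $c(e_i)\neq c(e_{i+1})$ for every admissible $i$.

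First I would unwind the forced hypothesis at the level of distances. By the definition of a forced path, each sub-2-path $u_iu_{i+1}u_{i+2}$ is a forced 2-path, so $u_iu_{i+2}\notin E(G)$ and $u_iu_{i+1}u_{i+2}$ is the \emph{unique} 2-path joining $u_i$ and $u_{i+2}$. The non-adjacency gives $d_G(u_i,u_{i+2})\geq 2$, while the existence of the displayed 2-path gives $d_G(u_i,u_{i+2})\leq 2$; hence $d_G(u_i,u_{i+2})=2$. This is the crucial point, since it is what makes the shortest-path requirement bite on a path of length exactly $2$.

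Next I would invoke the strong conflict-free connection hypothesis. Because $c$ makes $G$ strongly conflict-free connected, there must be a conflict-free path of length $d_G(u_i,u_{i+2})=2$ between $u_i$ and $u_{i+2}$. By the uniqueness clause of the forced condition, the only 2-path available is $u_iu_{i+1}u_{i+2}$ itself, so this very path is forced to be conflict-free under $c$. Finally I would cash this in on the colors: a conflict-free path carries some color exactly once, but a path of length $2$ has only the two edges $e_i$ and $e_{i+1}$ and only two colors are on hand. If $c(e_i)=c(e_{i+1})$, that common color appears twice while the other appears zero times, so no color occurs exactly once and the path is not conflict-free, a contradiction. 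Hence $c(e_i)\neq c(e_{i+1})$, and since $i$ and $c$ were arbitrary, every pair of adjacent edges of $P$ receives distinct colors under every strong conflict-free connection coloring with two colors.

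I do not expect a serious obstacle here: the statement is essentially a direct unwinding of the definitions. The only point demanding care is the twofold role of the forced 2-path condition, namely pinning down $d_G(u_i,u_{i+2})=2$ so that a length-$2$ path is the one being tested, and simultaneously using the uniqueness clause so that no alternative shortest path can restore conflict-freeness; both facts are supplied verbatim by the definition of a forced path.
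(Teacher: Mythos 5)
Your proof is correct and is essentially the paper's own reasoning: the paper states that this lemma ``follows directly from the definition,'' and your argument is precisely that direct unwinding (forcedness pins $d_G(u_i,u_{i+2})=2$ and the uniqueness of the 2-path, so strong conflict-freeness forces the two edges at $u_{i+1}$ to get distinct colors, since a monochromatic 2-path has no color appearing exactly once). No gaps.
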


\begin{lem}\label{cycle with forced}
Let $C=u_1u_2\cdots u_ku_1$ be a forced cycle of length $k$ in $G$ with $\mathit{scfc}(G)=2$. Then the adjacent edges of $C$ are colored by distinct colors for every strong conflict-free connection coloring with 2 colors and $k$ is even.
\end{lem}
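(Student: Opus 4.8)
The plan is to establish the two conclusions in sequence: first that every strong conflict-free connection coloring with two colors restricts to a proper edge-coloring on $C$, and then that this properness forces $k$ to be even. Since the statement is a close analogue of Lemma~\ref{path with forced}, I would reuse the same core observation about length-$2$ conflict-free paths.

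For the proper-coloring claim, fix a strong conflict-free connection coloring $c$ of $G$ with two colors and consider any two successive edges $u_{i-1}u_i$ and $u_iu_{i+1}$ of $C$ (indices modulo $k$). Because $C$ is a forced cycle, the $2$-path $u_{i-1}u_iu_{i+1}$ is a forced $2$-path, so $u_{i-1}u_{i+1}\notin E(G)$ and $u_{i-1}u_iu_{i+1}$ is the unique $2$-path joining $u_{i-1}$ and $u_{i+1}$. Hence $d_G(u_{i-1},u_{i+1})=2$ and the only path realizing this distance is $u_{i-1}u_iu_{i+1}$. Since $c$ is a strong conflict-free connection coloring, this unique shortest path must itself be conflict-free; but a path of length $2$ is conflict-free precisely when its two edges carry distinct colors, because equal colors would make that color appear twice and leave no color appearing exactly once. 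Therefore $c(u_{i-1}u_i)\neq c(u_iu_{i+1})$, and as $i$ was arbitrary, adjacent edges of $C$ receive distinct colors.

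For the parity claim, I would exploit that only two colors are available. Writing the colors around the cycle as $c(u_1u_2),c(u_2u_3),\dots,c(u_ku_1)$, the properness just established forces them to alternate between the two colors, so $c(u_iu_{i+1})$ depends only on the parity of $i$. Closing the cycle requires $c(u_ku_1)\neq c(u_1u_2)$; this fails when $k$ is odd, since the alternating pattern would return the starting color, and holds exactly when $k$ is even. This rules out odd $k$ and yields the conclusion.

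The argument has no serious obstacle; the only point that needs care is verifying that the forced-cycle hypothesis genuinely pins down the $2$-path on $C$ as the \emph{unique} shortest path between $u_{i-1}$ and $u_{i+1}$, so that no alternative shortest path can satisfy the conflict-free requirement with two equal colors. Once that uniqueness is secured, both the properness and the standard two-coloring parity argument for a cycle follow immediately.
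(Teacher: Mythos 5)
Your proof is correct and fills in exactly the argument the paper leaves implicit (the paper states that this lemma ``follows directly from the definition'' and gives no proof): the forced-$2$-path condition makes each $u_{i-1}u_iu_{i+1}$ the unique shortest $u_{i-1}$--$u_{i+1}$ path, forcing distinct colors on successive edges, and the two-color alternation around the cycle then forces $k$ to be even. No gaps.
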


Now we define some graph-classes. A $k$-ladder, denoted by $L_k$, is defined to be the product graph $P_k\Box K_2$, where $P_k$ is the path on $k$ vertices $(see \ Figure \ \ref{ladder})$. The M\"{o}bius ladder $M_{2k}$ is the graph obtained from $L_k$ by adding two new edges $s_1t_k$ and $t_1s_k$ $(see \ Figure \ \ref{mobius})$.

\begin{figure}[!htb]
\centering
\includegraphics[width=0.3\textwidth]{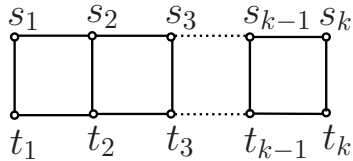}
\caption{The $k$-ladder $L_k$ }\label{ladder}
\end{figure}

\begin{lem}\label{scfc(C_kbox K_2}
$\mathit{scfc}(C_k\Box K_2)=2$ if and only if $k$ equals $3$, $4$ or $6$.
\end{lem}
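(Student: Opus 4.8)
The plan is to establish the trivial lower bound first and then treat the two directions separately. Since $C_k\Box K_2$ is cubic and hence not complete for every $k\geq 3$, Theorem \ref{scfc=1} gives $\mathit{scfc}(C_k\Box K_2)\geq 2$, so it suffices to decide when $2$ colors suffice. Throughout I write the vertex set as $\{s_1,\dots,s_k\}$ (the top cycle), $\{t_1,\dots,t_k\}$ (the bottom cycle), with rungs $s_it_i$, indices taken modulo $k$.

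For the necessity I would first observe that for $k\geq 5$ the top cycle $s_1s_2\cdots s_ks_1$ is a forced cycle: for each $i$ the vertices $s_{i-1}$ and $s_{i+1}$ are non-adjacent and $s_i$ is their only common neighbor, since the coincidences $s_{i-2}=s_{i+2}$ and $t_{i-1}=t_{i+1}$ occur only when $k\leq 4$. Hence, if $\mathit{scfc}=2$, Lemma \ref{cycle with forced} forces $k$ to be even, which already rules out every odd $k\geq 5$. For even $k\geq 8$ the same forced structure, by Lemma \ref{cycle with forced}, makes the top cycle properly $2$-colored, i.e.\ its colors alternate. I then consider the antipodal pair $u=s_1$, $v=s_{1+k/2}$. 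A short projection argument, collapsing each rung to a point so that a prism path of length $L$ using $r$ rungs projects to a $C_k$-walk of length $L-r$ between the images of $u$ and $v$, shows that $d(u,v)=k/2$ and that the only shortest paths are the two top arcs, each of length $k/2\geq 4$. An alternating path of length at least $4$ carries at least two edges of each color and so is not conflict-free; thus no shortest $u$-$v$ path is conflict-free, contradicting $\mathit{scfc}=2$. This yields $\mathit{scfc}(C_k\Box K_2)\geq 3$ for all even $k\geq 8$ and completes the necessity.

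For the sufficiency I would exhibit an explicit $2$-coloring for each of $k=3,4,6$ and verify the conflict-free condition by a finite check. For $k=3$ the graph has diameter $2$, so coloring both triangles with color $1$ and all three rungs with color $2$ joins every non-adjacent pair $s_i,t_j$ $(i\neq j)$ by the bicolored shortest path $s_is_jt_j$. For $k=4$ and $k=6$, Lemma \ref{cycle with forced} shows that the two cycles must alternate, so I would color $s_is_{i+1}$, $t_it_{i+1}$ and the rung $s_it_i$ with color $1$ for odd $i$ and color $2$ for even $i$. For such an alternating prism the two antipodal arcs on a cycle have length $k/2\leq 3$ and hence are conflict-free, while for a pair at the diameter one of the two ``$\lfloor k/2\rfloor$ cycle-edges plus one rung'' paths always has exactly one edge of one color; running through the finitely many orbits of vertex pairs, using the index shift $i\mapsto i+2$ which preserves the coloring, confirms that every pair has a conflict-free shortest path. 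Together with the lower bound this gives $\mathit{scfc}=2$ for $k\in\{3,4,6\}$.

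The main obstacle is the necessity for even $k\geq 8$: here the forced-cycle lemma no longer produces a parity contradiction, so one must instead pin down a specific pair of vertices whose shortest paths are all forced to be non-conflict-free, and the crucial step is the projection lemma identifying those shortest paths as the two alternating top arcs of length $k/2\geq 4$. On the sufficiency side the only genuine work is the finite verification for $k=6$, where the diameter-$4$ pairs must be checked, but the rotational symmetry $i\mapsto i+2$ of the proposed coloring keeps this bookkeeping short.
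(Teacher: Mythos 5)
Your proof is correct, and in the necessity direction it is actually more complete than the paper's. The paper's own argument is very short: it notes that for $k\geq 4$ the graph has a forced cycle and then asserts $k\in\{4,6\}$ directly from Lemma \ref{cycle with forced}; but that lemma only yields that a forced cycle has alternating colors and even length, which rules out odd $k\geq 5$ and leaves even $k\geq 8$ unaddressed. You close exactly that case: the forced top cycle must alternate, the projection argument shows the only shortest paths between the antipodal pair $s_1,s_{1+k/2}$ are the two top arcs of length $k/2\geq 4$, and an alternating path of length at least $4$ repeats both colors, so no conflict-free shortest path exists. This is the genuinely new content your write-up supplies, and it is needed for the statement as claimed. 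On sufficiency, your coloring for $k=4,6$ (both cycles in phase, rungs alternating) differs from the paper's (cycles out of phase, all rungs colored $1$); both check out, and your $k=3$ coloring coincides with the paper's. One small caveat: your appeal to Lemma \ref{cycle with forced} to justify that the cycles ``must alternate'' in the $k=4$ case is not valid as a deduction, since the $4$-cycles of $C_4\Box K_2$ are not forced (antipodal vertices of $C_4$ have two common neighbors); but you only use this as motivation for the choice of coloring, and the explicit finite verification is what carries the sufficiency, so nothing is lost.
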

\begin{proof}
Let $k\geq4$. Clearly, The graph has a forced cycle. Since $\mathit{scfc}(C_k\Box K_2)=2$, we have that $k=4$ or $6$ by Lemma \ref{cycle with forced}. When $k=3$, we define a $2$-edge-coloring $c$: for every edge $e$ in the triangles, $c(e)=1$; for the remaining edges $e$, $c(e)=2$. Clearly, the coloring is a strong conflict-free connection coloring for $C_3\Box K_2$. When $k=4,6$, we define a $2$-edge-coloring: assign alternate colors on the edges of $s_1s_2\cdots s_ks_1$ and $t_1t_2\cdots t_kt_1$ with colors 1 and 2 such that $c(s_1s_2)\neq c(t_1t_2)$, and all the remaining edges are colored by 1. One can easily check that this coloring is a strong conflict-free connection coloring.
\end{proof}

\begin{lem}\label{scfc(M{2k})=2}
$\mathit{scfc}(M_{2k})=2$ if and only if $3\leq k\leq7$.
\end{lem}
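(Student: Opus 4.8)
The plan is to work with the cyclic labelling $V(M_{2k})=\mathbb{Z}_{2k}$ in which the rim is the Hamiltonian cycle $0,1,\dots,2k-1,0$ with edges $\{i,i+1\}$ and the rungs are the chords $\{i,i+k\}$, $0\le i\le k-1$; this is the same graph as the one built from $L_k=P_k\Box K_2$, via $s_i\leftrightarrow i-1$ and $t_i\leftrightarrow k+i-1$. Since $M_{2k}$ is cubic on $2k\ge 6$ vertices, it is not complete, so $\mathit{scfc}(M_{2k})\ge 2$ always; hence it suffices to decide when the value is exactly $2$. The first step is to record that for $k\ge 4$ every rim $2$-path $i,i+1,i+2$ is \emph{forced}: the endpoints $i$ and $i+2$ are non-adjacent, and a short computation of common neighbours shows $i+1$ is their only common neighbour precisely once $k\ge 4$. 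Thus the whole rim is a forced even cycle, and by Lemma \ref{cycle with forced} its edges are alternately coloured in \emph{every} strong conflict-free $2$-colouring; in particular any four consecutive rim edges receive exactly two edges of each colour.

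For the lower bound I would treat $k\ge 8$. Regarding a rim step as the displacement $\pm 1$ and a chord step as $+k$ in $\mathbb{Z}_{2k}$, a path of length $\ell$ from $0$ to $4$ using $c$ chords has $ck+s\equiv 4\ (\mathrm{mod}\ 2k)$ with $|s|\le \ell-c$. A case check on $c$ shows that for $k\ge 8$ the only solution with $\ell\le 4$ is $c=0,\ \ell=4,\ s=4$. Consequently $d(0,4)=4$ and the rim path $0,1,2,3,4$ is the \emph{unique} shortest path between $0$ and $4$. By the previous paragraph this path carries two edges of each colour, so it is not conflict-free; since it is the only shortest path, no strong conflict-free $2$-colouring can exist, giving $\mathit{scfc}(M_{2k})\ge 3$ for all $k\ge 8$.

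For the upper bound I would exhibit explicit $2$-colourings when $3\le k\le 7$. The case $k=3$ is special because the rim is not forced: here $M_6\cong K_{3,3}$, so $\mathit{scfc}(M_6)=\lceil\sqrt[3]{3}\rceil=2$ by Theorem \ref{scfc(K-s-t)}. For $k\in\{4,5,6,7\}$ I would take the colouring that is alternating on the rim and assigns a single colour to every chord. Because $\mathrm{diam}(M_{2k})=\lceil k/2\rceil\le 4$, every pair at distance at most $3$ that admits a rim shortest path is automatically served by it (an alternating path of length $\le 3$ is conflict-free), so attention is confined to pairs whose shortest paths must use a chord. The genuinely constrained pairs occur only for $k=7$ (diameter $4$): each pair $\{v,v+4\}$ has the rim path together with four one-chord detours (the chord inserted in any of its four positions), and one checks that for $v$ even the ``chord-first'' detour $v,v+7,v+6,v+5,v+4$ and for $v$ odd the ``chord-last'' detour $v,v-1,v-2,v-3,v+4$ are conflict-free under this colouring. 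Verifying that the shorter ($d\le 3$) chord-using pairs are likewise served is a finite case analysis.

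The main obstacle is precisely this finite verification for the upper bound, especially at $k=7$: because a distance-$4$ pair has several competing shortest paths, one must confirm that a single global choice of chord colours makes at least one shortest path conflict-free for every pair simultaneously, and that no short chord-pair is left monochromatic; this is cleanest to present through the explicit colouring (and possibly a figure). By contrast, once the rim is identified as a forced cycle, the lower bound for $k\ge 8$ is routine, resting only on the uniqueness of the length-$4$ rim path established by the displacement count.
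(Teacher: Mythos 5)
Your lower bound for $k\ge 8$ is sound and is essentially the paper's argument in different clothing: the paper observes that $s_2s_3s_4s_5s_6$ is the unique shortest path between $s_2$ and $s_6$ and that all of its subpaths are also unique shortest paths, so the colouring restricted to it must be a strong conflict-free colouring of $P_5$, forcing at least $\lceil\log_2 5\rceil=3$ colours; you instead invoke the forced-cycle lemma to get alternation on the rim and then note that the unique length-$4$ rim path carries two edges of each colour. Both work, and your displacement count correctly establishes uniqueness of that shortest path. The $k=3$ case via $M_6\cong K_{3,3}$ and Theorem \ref{scfc(K-s-t)} is also fine.

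The genuine gap is in the upper bound: the colouring you propose for $k\in\{4,5,6,7\}$ (alternating rim, all chords the same colour) simply does not work when $k$ is even. Take $k=4$ with vertices $\mathbb{Z}_8$, rim edge $\{i,i+1\}$ coloured $1$ for $i$ even and $2$ for $i$ odd, and every chord coloured $a$. The pair $\{0,5\}$ is at distance $2$ with exactly two shortest paths, $0,1,5$ and $0,4,5$, and both have colour multiset $\{1,a\}$, so you need $a=2$; but the pair $\{1,6\}$ has exactly the two shortest paths $1,2,6$ and $1,5,6$, both with colour multiset $\{2,a\}$, so you need $a=1$ --- a contradiction. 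The same clash occurs for $k=6$ (pairs $\{0,7\}$ and $\{1,8\}$ in $M_{12}$). These are precisely the ``shorter chord-using pairs'' you defer to a routine finite check; the check in fact fails, so the construction must be changed, not merely verified. The fix is to break the symmetry on the rungs: the paper's colouring for $k\in\{4,6\}$ alternates the rungs as well ($c(s_it_i)=1$ for odd $i$ and $2$ for even $i$), which gives the two competing $2$-paths above different colour pairs. (For odd $k\in\{5,7\}$ your monochromatic-chord colouring does appear to survive the case check, because a chord then joins vertices of opposite rim parity, but as written your proof of sufficiency is broken for $k=4$ and $k=6$.)
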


\begin{proof}
It is clear to see that $\mathit{scfc}(M_{2k})\geq2$ for every $k\geq3$ since $M_{2k}$ is not a complete graph. First, when $k\geq8$, clearly for the pair of vertices $s_2$ and $s_6$ there is only one shortest path connecting them, which is $P'=s_2s_3s_4s_5s_6$. For every pair of vertices in $P$, there is only one shortest path in $M_{2k}$ connecting them. So we have that $\mathit{scfc}(M_{2k})\geq \mathit{scfc}(P')=3$. For the graph $M_{2k}$ with $k\in\{4,6\}$, we define a $2$-edge-coloring $c$: for $i\in\{1,3,5\}$, $c(s_is_{i+1})=c(t_it_{i+1})=c(s_it_i)=1$; for the remaining edges $e$, $c(e)=2$. For the graph $M_{2k}$ with $k\in\{3,5,7\}$, we define a $2$-edge-coloring $c$: for $i\in\{1,3,5\}$, $c(s_is_{i+1})=c(t_{i+1}t_{i+2})=1$; for $i\in\{1,2\cdots,k\}$, $c(s_it_i)=c(s_kt_1)=1$; for the remaining edges $e$, $c(e)=2$.
It is easy to check that every pair of vertices are connected by a strong conflict-free path under the above $2$-edge-colorings.
\end{proof}
\begin{figure}[!htb]
\centering
\includegraphics[width=0.3\textwidth]{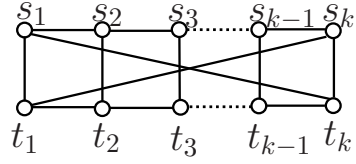}
\caption{The M\"{o}bius $M_{2k}$}\label{mobius}
\end{figure}

In order to be more convenient to handle with the following theorem, let us start with some explanations. Let $G$ be a cubic graph, and let $c:$ $E(G)\mapsto \{1,2\}$ be a strong conflict-free connection coloring of $G$. Let $P=(u=)v_1v_2\cdots v_{t-1}v_t(=v)$ be a strong conflict-free path between $u$ and $v$. Suppose that there exists a 2-path $v_iv_{i+1}v_{i+2}$ in $P$ such that $c(v_iv_{i+1})= c(v_{i+1}v_{i+2})$. Then there must exist another 2-path $v_iv_{i+1}'v_{i+2}$ with $c(v_iv_{i+1}')\neq c(v_{i+1}'v_{i+2})$ to replace $v_iv_{i+1}v_{i+2}$ since there exists a strong conflict-free path for the pair of $v_i$ and $v_{i+2}$. Then $v_iv_{i+1}'v_{i+2}$ is called\emph{ a replacement}. Furthermore, suppose that $c(v_{i-1}v_i)=c(v_iv_{i+1}')$. Then there must also exist a replacement $v_{i-1}v_i'v_{i+1}'$ with $c(v_{i-1}v_i')\neq c(v_i'v_{i+1}')$ for $v_{i-1}v_iv_{i+1}'$. Continue the operation. If there does not exist a replacement sharing the same edges with $P$, then the sequence of replacements is called a \emph{finite replacement} of $P$. Otherwise, the the sequence of replacements is called an \emph{infinite replacement} of $P$.

\begin{figure}[!htb]
\centering
\includegraphics[width=0.2\textwidth]{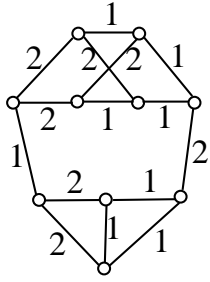}
\caption{The graph $U$ }\label{U}
\end{figure}

\begin{thm}\label{scfc=spc=2}
Let $G$ be a cubic graph with $G\ncong U$. If $\mathit{scfc}(G)=2$, then $\mathit{spc}(G)=2$.
\end{thm}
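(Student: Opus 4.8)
The plan is to prove that the \emph{same} $2$-coloring witnessing $\mathit{scfc}(G)=2$ is already a strong proper connection coloring, so that no recoloring is required. The lower bound is immediate: since $\mathit{scfc}(G)=2>1$, Theorem~\ref{scfc=1} shows that $G$ is not complete, hence $diam(G)\ge 2$, and on a shortest path of length $2$ a proper coloring needs two colors, giving $\mathit{spc}(G)\ge 2$. Everything therefore reduces to proving $\mathit{spc}(G)\le 2$, and I would do this by fixing a strong conflict-free connection coloring $c\colon E(G)\to\{1,2\}$ and showing that under $c$ every pair of vertices is joined by a properly colored (alternating) shortest path.

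For an arbitrary pair $u,v$ I would start from a conflict-free shortest path $P=v_1\cdots v_t$ and convert it into a proper shortest path of the same length. The crucial local fact is that a conflict-free path of length $2$ must be properly colored, because its two edges are forced to carry different colors. Hence whenever $P$ contains a monochromatic bend $v_iv_{i+1}v_{i+2}$ with $c(v_iv_{i+1})=c(v_{i+1}v_{i+2})$, applying the strong conflict-free hypothesis to the pair $(v_i,v_{i+2})$, which lie at distance exactly $2$ on the shortest path, supplies a replacement $v_iv_{i+1}'v_{i+2}$ with $c(v_iv_{i+1}')\neq c(v_{i+1}'v_{i+2})$. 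This swap removes the bend without changing the length and never touches the colors, so I would iterate it, attacking the remaining bends. If the resulting sequence of replacements is \emph{finite} in the sense defined just before the theorem (no replacement ever reuses an edge of the current path), the process halts at a proper shortest $u$-$v$ path, and since $c$ is never modified, this exhibits a properly colored shortest path for $u,v$ under $c$ itself.

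The heart of the argument, and the step I expect to be the main obstacle, is to exclude \emph{infinite} replacements whenever $G\ncong U$. Here I would exploit cubicity. Because $P$ is a shortest path one checks that $v_{i-1},v_{i+1},v_{i+1}'$ are pairwise distinct, so they are precisely the three neighbors of $v_i$; symmetrically $v_{i+1},v_{i+1}',v_{i+3}$ are the three neighbors of $v_{i+2}$. Thus each replacement completely determines two neighborhoods, and a cascade of replacements is extremely rigid. By following two or three consecutive replacements and repeatedly invoking that every vertex has degree $3$, I would carry out a case analysis, according to whether successive replacement vertices coincide, are adjacent, or force a fresh bend, and argue that the only way a replacement can feed an edge back onto the current path, i.e.\ the only way the sequence can fail to terminate, is the specific adjacency pattern realized by $U$. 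Since $G$ is cubic and connected, this traversed subgraph already saturates every degree and therefore forces $G\cong U$, contrary to hypothesis. Consequently, for $G\ncong U$ every replacement sequence is finite, every pair of vertices is joined by a proper shortest path under $c$, and together with the lower bound this yields $\mathit{spc}(G)=2$.
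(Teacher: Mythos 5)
Your setup coincides with the paper's: fix a strong conflict-free $2$-coloring $c$, take a shortest conflict-free path, repair monochromatic bends by replacements, and reduce everything to ruling out infinite replacement sequences. The lower bound $\mathit{spc}(G)\ge 2$ and the observation that a conflict-free $2$-path is automatically proper are both fine. The gap is in the one step you yourself flag as the heart of the argument: the claim that an infinite replacement sequence forces $G\cong U$. That claim is false as stated, and the paper's proof shows why. When it analyzes the subgraph generated by cascading replacements (the ``attachment'' $W$ on the vertices $v_{i-3},\dots,v_{i+2},v_i',v_{i+1}'$), the case analysis over the length of a shortest cycle containing the attachment does not terminate only in $U$; it also produces $M_6$, $L_2$ and $L_3$ as surviving configurations. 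For those graphs the paper does \emph{not} conclude that the original coloring $c$ is proper-connecting --- it instead exhibits separate $2$-colorings witnessing $\mathit{spc}=2$. So your stronger assertion, that the very same coloring $c$ already realizes $\mathit{spc}(G)=2$, is not what can be proved along this route, and your dichotomy ``finite replacement, or else $G\cong U$'' is missing at least three exceptional families.

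A second, related problem is the justification you offer for the rigidity claim. You argue that the subgraph traversed by the replacements ``saturates every degree and therefore forces $G\cong U$.'' It does not: in the attachment the end vertices $v_{i-3}$ and $v_{i+2}$ (and in general the vertices where the cascade stops) still have unused edges, and the entire difficulty of the paper's proof lies in tracking how the attachment extends into the rest of the cubic graph. This requires proving that a shortest path carries at most one attachment and a cycle at most two (the paper's Claim 2), and then a genuine case analysis over cycle lengths $6$ through $11{+}$, using forced paths and forced cycles (Lemmas \ref{path with forced} and \ref{cycle with forced}) to derive contradictions with $\mathit{scfc}(G)=2$ in most branches. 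None of this is present in your proposal beyond the phrase ``carry out a case analysis,'' so the argument as written does not establish the theorem.
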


\begin{proof}
Let $c:$ $E(G)\mapsto [2]$ be a strong conflict-free connection coloring of $G$. Let $P=(u=)v_1v_2\cdots v_{t-1}v_t(=v)$ be an arbitrary strong conflict-free path between $u$ and $v$. For every pair of $v_i$ and $v_{i+2}$ $(i\in[t])$, if $c(v_iv_{i+1})\neq c(v_{i+1}v_{i+2})$, then $P$ is a strong proper path. Suppose that there exists $v_iv_{i+1}v_{i+2}$ $(i\in[t-2])$ in $P$ such that $c(v_iv_{i+1})= c(v_{i+1}v_{i+2})$. If there exist a finite replacement for $P$, then there is a strong proper path for every pair of vertices in $G$. Suppose that the replacement is an infinite one for $P$ $(see \ Figure\  \ref{scfc=spc})$.

We denote $G[V']$ by $W$, where $V'=\{v_{i-3},v_{i-2},v_{i-1},v_{i},v_{i+1},v_{i+2},v'_{i},v'_{i+1}\}$, and we say that $W$ is an \emph{attachment} of path $P$. Then we first show the following claims.

\begin{figure}[!htb]
\centering
\includegraphics[width=0.5\textwidth]{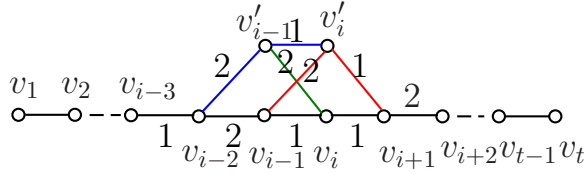}
\caption{The path $P$ with attachment $W$}\label{scfc=spc}
\end{figure}

\emph{Claim 1. For every strong conflict-free connection coloring $c$, $c(v_{i-3}v_{i-2})\neq c(v_{i-2}v_{i-1})=c(v_{i-2}v'_{i-1})$ and $c(v_{i+1}v_{i+2})\neq c(v_{i}v_{i+1})=c(v'_{i}v_{i+1})$.}

\emph{Proof of Claim 1:}  Without loss of generality, suppose that $c(v_{i-3}v_{i-2})= c(v_{i-2}v_{i-1})$. Then $v_{i_3}v_{i-2}v'_{i-1}$ is a unique shortest path between $v_{i-3}$ and $v'_{i-1}$
since $G$ is a cubic graph with $d(v'_{i-1})=3$. It contradicts that $c(v_{i-3}v_{i-2})= c(v_{i-2}v_{i-1})$ for the coloring $c$.

\emph{Claim 2. There is at most one attachment in $P$. Furthermore, let $C$ be a cycle. Then there are at most two attachments in $C$.}

\emph{Proof of Claim 2:} Assume that there are two attachments in $P$. Since $P$ is a shortest path, every subpath of $P$ is shortest. Hence, there is no strong conflict-free path between the attachments by Claim 1. Suppose that there are three attachments in $C$. Then $|C|\geq 12$, a contradiction by Claim 1. Completing the proof of Claim 2.

If the path $P$ with an attachment is not contained in a cycle, then there exist at least two cut-edges since $G$ is a cubic graph. Clearly, $\mathit{scfc}(G)\geq3$ by Claim 1. If we identify $v_{i-3}$ with $v_{i+2}$, then $G=M_6$ with $\mathit{spc}(M_6)=2$ by Lemma \ref{withoutforced}. Now we handle with the case that $P$ with an attachment is contained in a shortest cycle $C$. Clearly, $|C|\geq 6$, otherwise, $P$ does not contain an attachment. Suppose $|C|=6$. Then there are two vertices $u_1,u_2$ except the vertices of the attachment in $C$. If $u_1$ and $u_2$ are not adjacent to the same neighbor, then every pair of edges incident with $u_1$ is a forced 2-path. Hence, there need at least three colors, a contradiction. Let $x$ be a common neighbor of $u_1$ and $u_2$, where $u_2$ is adjacent to $v_{i+1}$. Let $y$ be a neighbor of $x$, and $z$ be another neighbor of $y$ except $x$. Thus, $v_{i+1}u_2xyz$ is a unique forced path for the pair $v_{i+1},z$. Then it is not a strong conflict-free path by Lemma \ref{path with forced}. Suppose $|C|=7$.
Let $u_1,u_2,u_3$ be three vertices except the vertices of the attachment in $C$. If each of $u_1,u_2,u_3$ is in a triangle, then $G\cong U$ ($see \ Figure \ \ref{U}$). If one of $u_1,u_2,u_3$ is in a triangle, then there exists a unique forced 4-path for a pair of vertices in $C$, a contradiction. Suppose that $C=v_1v_2v_3v_4u_1u_2u_3u_4v_1$, and suppose further that there are two attachments in $C$. Then $G \cong L_2$ ($see \ Figure \ \ref{L}$) with an edge-coloring such that $\mathit{scfc}(G)=\mathit{spc}(G)=2$. Suppose that $u_1, u_2, u_3, u_4$ are in triangles. Then $G\cong L_3$ ($see \ Figure \ \ref{L}$) such that $\mathit{scfc}(G)=\mathit{spc}(G)=2$. Otherwise, there will exist a unique forced 4-path for a pair of vertices in $C$, a contradiction. Suppose that at most one triangle contains two of the vertices $u_1,u_2,u_3,u_4$, without loss of generality, say $u_1,u_2$. Suppose further that $u_3u_4$ is a forced edge. Then $c(v_1u_4)\neq c(u_4u_3)\neq c(u_4x)$, where $x$ is a neighbor of $u_4$ except $v_1,u_3$, a contradiction. Then suppose that $u_3, u_4$ are contained a 4-cycle $C'$. Clearly, there provides a unique forced 4-path for the pair of $v_2$ and one vertex in $C'$ except $u_3,u_4$, a contradiction.
Suppose $9\leq|C|\leq 10$. Then there is a unique forced 4-path for some pair of vertices in $G$. Hence, $\mathit{scfc}(G)\geq 3$, a contradiction. Assume $|C|\geq 11$. Then there exists a unique shortest path of length 5 between $v_{i-3}$ and $v_{i+2}$, a contradiction by Claim 1.
\end{proof}

Now we only need to check whether 2 is the strong conflict-free connection number of $G$ with $\mathit{spc}(G)=2$ by Theorem \ref{scfc=spc=2}.

\begin{thm}\label{withoutforced}\upshape\cite{HY}
Let $G$ be a cubic graph without forced edges. Suppose further that $G\neq K_4$. Then $\mathit{spc}(G)=2$ if and only if $G\in\{C_3\Box K_2, C_{2k}\Box K_2, M_{2k}\}$ for some $k\geq 2$.
\end{thm}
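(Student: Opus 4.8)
The plan is to establish the two directions separately. The easy direction (sufficiency) is to produce, for each graph in the list, an explicit strong proper $2$-colouring; the substantive direction (necessity) is a structural argument showing that the cubic hypothesis together with the absence of forced edges and the constraint $\mathit{spc}(G)=2$ leaves no room for any graph outside the list. The engine of the whole argument is the following elementary observation, which is the strong-proper analogue of Lemma \ref{path with forced}: if $xyz$ is a forced $2$-path in a cubic $G$ with $\mathit{spc}(G)=2$, then $xyz$ is the \emph{only} shortest $x$--$z$ path, so it must itself be proper, forcing $c(xy)\neq c(yz)$. Hence every forced $2$-path is bichromatic, and chaining forced $2$-paths turns the colouring into a rigid alternation pattern along forced subpaths and forced cycles.

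For the sufficiency I would colour all three families as ``rails plus rungs'', mirroring the colourings used earlier for $C_k\Box K_2$ and $M_{2k}$. For an even prism $C_{2k}\Box K_2$ I colour the two rail cycles alternately by $1,2$ out of phase (so that the two rail edges of each square differ), which closes up consistently exactly because $2k$ is even, and I colour every rung with colour $1$; one then checks that every forced rail $2$-path is already bichromatic and that every remaining pair is joined by an alternating shortest path through a suitable rung. The Möbius ladder $M_{2k}$ is treated identically, using that its helical rim cycle again has even length $2k$. The triangular prism $C_3\Box K_2$ is the small exceptional case: colour the two triangles with one colour and the three rungs with the other. This works precisely because the three rail vertices of a triangle are pairwise adjacent, so no forced $2$-path is created along a rail, which is what would otherwise demand a rail-alternation.

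For the necessity, fix a strong proper $2$-colouring $c$. Since $G$ has no forced edges, every edge lies in a triangle or a $4$-cycle, and since $G$ is cubic the three edges at each vertex are organised into overlapping short cycles. I would first show these short cycles assemble into a single ladder-type skeleton: starting from one $4$-cycle, the cubic and no-forced-edge conditions force the next rung to bound a further $4$-cycle, so the squares chain along a common sequence of rungs, and the chain must eventually close up either directly, giving a prism $C_n\Box K_2$, or with a half-twist, giving a Möbius ladder $M_{2k}$. The parity restriction then comes from the forced $2$-paths: along a rail cycle of length $n\ge5$ each consecutive triple $a_{i-1}a_ia_{i+1}$ has the unique common neighbour $a_i$ and satisfies $a_{i-1}a_{i+1}\notin E(G)$, hence is a forced $2$-path, so $c$ must alternate all the way around the rail, which is impossible when the rail is an odd cycle. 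This excludes every odd prism $C_{2k+1}\Box K_2$ with $2k+1\ge5$, while the triangular prism survives (its rail vertices are adjacent, creating no forced $2$-path) and the Möbius ladders survive (their rim cycle has even length $2k$).

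The hardest part will be the structural step: upgrading the purely local condition ``every edge lies in a $C_3$ or $C_4$'' to the global conclusion that $G$ is a prism, a Möbius ladder, or the triangular prism. This demands a careful case analysis ruling out sporadic gluings of triangles and squares, typically by exhibiting in each bad configuration a unique forced $4$-path and invoking the forced-$2$-path principle to conclude $\mathit{spc}(G)\ge3$, in the same spirit as the forced-path arguments in the proof of Theorem \ref{scfc=spc=2}; in particular one must show that once $G\neq K_4$, triangles can occur only in the single triangular-prism pattern. Since the statement itself is quoted from \cite{HY}, the endgame would be to match this case analysis to theirs, the reusable core being the bichromatic forced-$2$-path principle and the odd-rail parity obstruction isolated above.
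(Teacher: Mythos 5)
This statement is quoted from \cite{HY} and the paper you are working from contains no proof of it at all --- it is imported as a black box --- so there is no internal argument to compare yours against; I can only assess your sketch on its own merits. The two reusable ingredients you isolate are correct and genuinely load-bearing: a forced $2$-path $xyz$ has $xyz$ as its unique shortest $x$--$z$ path, so any strong proper $2$-colouring must make it bichromatic, and chaining this around a rail cycle of a prism $C_n\Box K_2$ with $n\geq 5$ forces alternation and hence kills odd $n\geq 5$ while sparing $C_3\Box K_2$ (adjacent rail vertices) and $C_4\Box K_2$ (two common neighbours, so no forcing). The sufficiency colourings are plausible, though you have not actually verified them for all vertex pairs (for antipodal pairs in a long even prism one must exhibit at least one proper shortest path among many, and the ``rungs all colour $1$'' choice needs that check).

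The genuine gap is the necessity direction's structural step, which is essentially the whole theorem. Your assertion that ``the cubic and no-forced-edge conditions force the next rung to bound a further $4$-cycle, so the squares chain along a common sequence of rungs'' is not a consequence of those hypotheses alone: there are cubic graphs with no forced edges that are not prisms or M\"obius ladders (for instance, joining two $4$-cycles by a non-identity matching already produces such graphs on $8$ vertices), so the chaining can only be forced by additionally invoking $\mathit{spc}(G)=2$, and you give no mechanism for doing so beyond the hope that ``each bad configuration'' contains a unique forced $4$-path. That is not guaranteed --- a sporadic gluing of triangles and squares may admit no forced path at all and still fail to be strongly properly $2$-colourable for a global parity or counting reason --- and you explicitly defer this entire case analysis to \cite{HY}. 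As it stands the proposal establishes the parity obstruction and (modulo verification) the sufficiency, but does not prove that every cubic, forced-edge-free graph with $\mathit{spc}=2$ lies in the stated list; that classification is the hard content and remains unproved here.
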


Combining Theorem \ref{scfc=spc=2}, Theorem \ref{withoutforced}, Lemma \ref{scfc(C_kbox K_2} and Lemma \ref{scfc(M{2k})=2}, we have the following result.
\begin{lem}\label{scfcwithout}
Let $G$ be a cubic graph without forced edges. Then $\mathit{scfc}(G)=2$ if and only if $G\in\{C_{l}\Box K_2, M_{2k}\}$ for $l\in\{3,4,6\}$ and for $k$ with $3\leq k\leq7$.
\end{lem}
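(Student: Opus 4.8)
Lemma~\ref{scfcwithout} asserts that for a cubic graph $G$ without forced edges, $\mathit{scfc}(G)=2$ if and only if $G\in\{C_l\Box K_2, M_{2k}\}$ for $l\in\{3,4,6\}$ and $3\leq k\leq 7$.

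**The plan.** My plan is to assemble this characterization directly from the four results cited just before the statement, treating it as a bridge between the strong proper connection side and the strong conflict-free connection side. The key idea is that for cubic graphs the two parameters are tightly linked: Theorem~\ref{scfc=spc=2} tells us that (apart from the single exceptional graph $U$) $\mathit{scfc}(G)=2$ forces $\mathit{spc}(G)=2$, while the reverse implication is handled by the explicit colorings in Lemmas~\ref{scfc(C_kbox K_2} and~\ref{scfc(M{2k})=2}. So the strategy is: first use the forced-structure hypotheses to pin down the candidate family via the known $\mathit{spc}$ classification, then refine that family down to the exact values of $l$ and $k$ for which $\mathit{scfc}=2$ actually holds.

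**The two directions.** For the forward direction I would argue as follows. Suppose $G$ is cubic, has no forced edges, and $\mathit{scfc}(G)=2$. Since $U$ contains forced edges (a $7$-cycle structure with triangles forces a unique long path), the hypothesis $G\ncong U$ in Theorem~\ref{scfc=spc=2} is automatically satisfied, so that theorem gives $\mathit{spc}(G)=2$. Assuming $G\neq K_4$ (noting $K_4$ is complete so is excluded, or handled separately), Theorem~\ref{withoutforced} then restricts $G$ to $\{C_3\Box K_2,\, C_{2k}\Box K_2,\, M_{2k}\}$ for $k\geq 2$. At this point the candidate set is not yet the final one: I must intersect it with the precise conditions under which $\mathit{scfc}=2$. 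By Lemma~\ref{scfc(C_kbox K_2}, among the prisms $C_l\Box K_2$ only $l\in\{3,4,6\}$ give $\mathit{scfc}=2$; by Lemma~\ref{scfc(M{2k})=2}, among the M\"obius ladders $M_{2k}$ only $3\leq k\leq 7$ give $\mathit{scfc}=2$. This yields exactly the claimed family. For the reverse direction, the two lemmas already supply explicit $2$-colorings showing $\mathit{scfc}(G)=2$ for each graph in the list, so no further work is needed.

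**The main obstacle.** The genuinely delicate point is the compatibility of the three classifications at the boundary. Theorem~\ref{withoutforced} is phrased in terms of $C_{2k}\Box K_2$ (even prisms) for $k\geq 2$, whereas Lemma~\ref{scfc(C_kbox K_2} is phrased for $C_k\Box K_2$ with arbitrary $k$ and singles out $l\in\{3,4,6\}$. I need to verify that $C_3\Box K_2$ (which Theorem~\ref{withoutforced} lists separately, since $3$ is odd) and the even prisms $C_4\Box K_2,\,C_6\Box K_2$ are indeed the only prisms surviving both the forced-edge hypothesis and the $\mathit{scfc}=2$ requirement, and that no odd prism with $l\geq 5$ sneaks in. The cleanest way to handle this is to observe that a cubic graph with no forced edges means every edge lies in a cycle of length at most $4$; this constraint is exactly what underlies Theorem~\ref{withoutforced}, and it is automatically respected by all three candidate families, so the intersection is purely a matter of reading off the index sets. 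Thus the proof reduces to citing the four prior results in sequence and checking that the index conditions $l\in\{3,4,6\}$ and $3\leq k\leq 7$ are precisely their common refinement.
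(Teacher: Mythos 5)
Your proposal is correct and follows essentially the same route as the paper: the paper gives no explicit proof of this lemma, stating only that it follows by combining Theorem~\ref{scfc=spc=2}, Theorem~\ref{withoutforced}, Lemma~\ref{scfc(C_kbox K_2} and Lemma~\ref{scfc(M{2k})=2}, which is exactly the combination you carry out. Your additional attention to the boundary cases ($K_4$, the exceptional graph $U$, and reconciling the index sets of the prism families) only makes explicit what the paper leaves implicit.
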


Let $F_0(k)$ be the cubic graph which is obtained from $L_k$ by adding two new vertices $x$ and $y$ and adding five new edges $xy,xs_1,xt_1,ys_k,yt_k$ ($see \ Figure \ \ref{F0(k)}$).

\begin{figure}[!htb]
\centering
\includegraphics[width=0.4\textwidth]{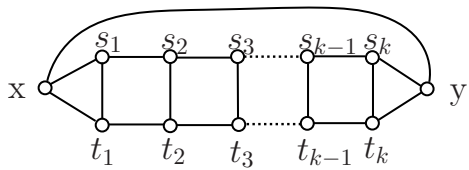}
\caption{The graph $F_0(k)$ }\label{F0(k)}
\end{figure}

\begin{lem}\label{scfcF0(k)=2}
$\mathit{scfc}(F_0(k))=2$ with $k\geq2$ if and only if $k\in \{2,4\}$.
\end{lem}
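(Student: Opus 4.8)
The plan is to prove the equivalence by establishing the necessity (that $k \in \{2,4\}$ is forced) and the sufficiency (that these two values actually achieve $\mathit{scfc}=2$) separately. For the sufficiency direction, I would simply exhibit explicit $2$-edge-colorings of $F_0(2)$ and $F_0(4)$ and verify that every pair of vertices is joined by a strong conflict-free (i.e. shortest) path. Since $F_0(k)$ has small diameter for these values and a highly symmetric ladder-plus-caps structure, I expect the verification to reduce to checking a bounded number of representative vertex pairs, using the automorphisms of $F_0(k)$ (the reflection swapping $x \leftrightarrow y$ and the reflection swapping the two rails $s_i \leftrightarrow t_i$) to cut down the casework.

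For the necessity direction, the key is to rule out all $k \geq 2$ with $k \notin \{2,4\}$, and this is exactly where I expect to lean on the forced-structure lemmas (Lemma~\ref{path with forced} and Lemma~\ref{cycle with forced}) developed earlier. First I would locate a forced path or forced cycle inside $F_0(k)$ whose length grows with $k$: the ``spine'' of the ladder, together with the capping edges through $x$ and $y$, should contain long forced substructures once $k$ is large, because the internal ladder vertices $s_i,t_i$ with $2 \le i \le k-1$ have their three neighbours pinned down and the rungs $s_it_i$ tend to be forced edges (each rung lies in few short cycles). By Lemma~\ref{path with forced}, a forced path of length $\ell$ forces $\lceil \log_2(\ell+1)\rceil$-many colors along it in the tree sense, so once the forced path is long enough that $\mathit{scfc}$ of that subpath exceeds $2$, we get $\mathit{scfc}(F_0(k)) \geq 3$. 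Concretely I would argue that for $k \geq 5$ (and separately handle $k=3$) there is a unique shortest path of length at least $4$ between a suitable pair of vertices, say between a vertex near the $x$-cap and one near the $y$-cap, all of whose internal $2$-paths are forced; a forced $4$-path already cannot be strong conflict-free with only $2$ colors, giving the contradiction.

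The main obstacle, and the step requiring the most care, is the boundary case $k=3$, which must be excluded even though it is small. Here the forced-path argument that works cleanly for large $k$ may fail because the graph is still quite dense and short, so I would instead check directly that some specific pair of vertices in $F_0(3)$ admits only shortest paths that are all monochromatic or otherwise non-conflict-free under every $2$-coloring, mirroring how Lemma~\ref{scfc(M{2k})=2} and Lemma~\ref{scfc(C_kbox K_2} handled their exceptional small parameters. The second delicate point is pinning down exactly which rungs and cap-edges are forced edges (in the sense of the earlier definition, namely edges not lying in a cycle of length at most $4$): I would need to verify that the internal rungs of the ladder are genuinely forced so that the forced-path machinery applies, and that the only short cycles are the ladder's $4$-cycles and the triangles introduced at the caps $xs_1t_1$ and $ys_kt_k$. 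Once the forced substructure is correctly identified, both the exclusion of $k \geq 5$ and of $k=3$ follow from the lemmas, and together with the explicit colorings for $k \in \{2,4\}$ this completes the characterization.
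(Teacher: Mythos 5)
Your overall strategy matches the paper's: explicit $2$-colorings for $k\in\{2,4\}$, and the forced-structure lemmas for necessity. But the necessity half, as you have set it up, has concrete problems. First, the forced substructure is not where you locate it: the internal rungs $s_it_i$ ($2\le i\le k-1$) each lie in two $4$-cycles $s_is_{i+1}t_{i+1}t_i$ and $s_{i-1}s_it_it_{i-1}$, and the end rungs lie in the triangles $xs_1t_1$ and $ys_kt_k$, so \emph{no} rung is a forced edge; for $k\ge 3$ the only forced edge of $F_0(k)$ is $xy$. The object that actually carries the argument is the cycle $xs_1s_2\cdots s_ky x$ of length $k+2$: every two successive edges on it form a forced $2$-path (each relevant pair of vertices has a unique common neighbour), so it is a \emph{forced cycle} in the sense of the definition even though almost none of its edges are forced edges. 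This is the paper's one-line observation, and Lemma~\ref{cycle with forced} then forces $k+2$ to be even, killing all odd $k\ge 3$ at a stroke — including $k=3$ and $k=5$, which you treat as delicate separate cases. Your plan for $k=5$ in particular (a unique shortest forced $4$-path between a pair near the two caps) does not obviously exist: pairs at distance $4$ in $F_0(5)$, such as $s_1$ and $t_4$, have several shortest paths, and some of their internal $2$-paths (e.g.\ $s_1t_1t_2$, where $s_1$ and $t_2$ have the two common neighbours $t_1$ and $s_2$) are not forced.

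Second, your invocation of Lemma~\ref{path with forced} overstates what it gives: it only says that adjacent edges of a forced path must receive distinct colors under a $2$-coloring, not that a forced path of length $\ell$ needs $\lceil\log_2(\ell+1)\rceil$ colors. The actual contradiction for long forced structures is that an alternately $2$-colored path of length $\ge 4$ has each color appearing at least twice, hence no conflict-free shortest path — and one must additionally argue that \emph{every} shortest path between the chosen pair is so constrained (for even $k\ge 6$ the pair $s_1,s_5$ has exactly two shortest paths, both lying on the forced cycle, so both alternate and both fail). With the forced cycle correctly identified, the parity clause disposes of odd $k$ and this alternation argument disposes of even $k\ge 6$; your sufficiency direction for $k\in\{2,4\}$ is fine and coincides with the paper's.
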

\begin{proof}
When $k\geq3$, the cycle $xs_1s_2\cdots s_ky$, say $C$, is a forced one in $F_0(k)$. Then we have that $k=4$ by Lemma \ref{cycle with forced}. When $k=2$, we define an edge-coloring $c$ for $F_0(k)$: $c(xy)=2$; $c(xs_1)=c(xt_1)=c(ys_k)=c(yt_k)=1$; $c(s_is_{i+1})=c(t_it_{i+1})=c(s_it_i)=1$ for even $i\in[k]$; for all the remaining edges, $c(s_is_{i+1})=c(t_it_{i+1})=c(s_it_i)=2$ for odd $i\in[k]$. We can  easily
check that every pair of vertices have a strong conflict-free path connecting them. Since $F_0(k)>1$, we have that $\mathit{scfc}(F_0(k))=2$ for $k=2$ or $4$.
\end{proof}

We now introduce a family $\mathcal{H}$ of graphs which are demonstrated in Figure \ref{H}.
\begin{center}
$\mathcal{H}=\{F_0^*(k),\hat{K_4},\hat{D_3},\tilde{K_{3,3}},\tilde{Q_3},F_1(k)\}$ $(k\in \mathbb{N})$
\end{center}

\begin{figure}[!htb]
\centering
\includegraphics[width=0.5\textwidth]{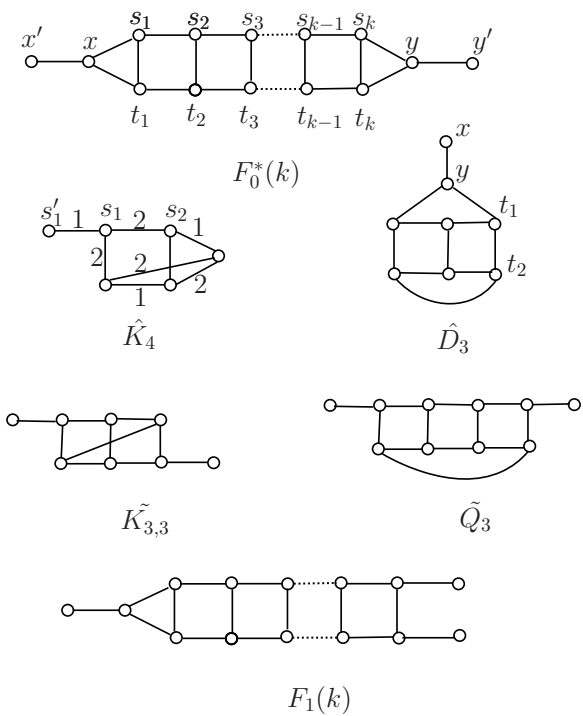}
\caption{The family of graphs $\mathcal{H}$ }\label{H}
\end{figure}

\begin{thm}\label{oneforcededge}\upshape\cite{HY}
Let $G$ be a cubic graph with exactly one forced edge. Then $\mathit{spc}(G)=2$ if and only if $G=F_0(k)$ for some even $k\geq4$, or $G$ is obtained from $H_1$ and $H_2$ by identifying the pendent edges to a single edge, where $H_i\in\{\hat{K_4},\hat{D_3}\}$ for $i=1,2$.
\end{thm}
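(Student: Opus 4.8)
The plan is to prove both directions, relying throughout on the crucial observation (recorded just before Lemma~\ref{path with forced}) that any $2$-path extending the forced edge is itself a forced $2$-path, and that on a forced $2$-path $xyz$ the two edges must receive distinct colours under any coloring witnessing $\mathit{spc}(G)=2$, since $xyz$ is the unique shortest $x$--$z$ path and hence must be proper. I would first record the $\mathit{spc}$-analogues of Lemmas~\ref{path with forced} and~\ref{cycle with forced}: a forced path is properly $2$-coloured, and a forced cycle is both properly $2$-coloured and of even length; the arguments are verbatim the same, using only that the relevant path is the unique shortest one. Writing $uv$ for the unique forced edge and $a,b$ (resp.\ $c,d$) for the other two neighbours of $u$ (resp.\ $v$), the forced $2$-paths $auv$, $buv$, $ucv$, $udv$ immediately force $c(ua)=c(ub)\neq c(uv)$ and $c(vc)=c(vd)\neq c(uv)$ in every $2$-coloring realizing $\mathit{spc}(G)=2$.

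For sufficiency I would exhibit explicit $2$-colorings. For $F_0(k)$ with even $k$, colour the ladder $L_k$ in the standard ``brick'' pattern so that consecutive rungs and side edges alternate between the two colours, and colour the two cap triangles and the forced edge $xy$ compatibly with this alternation; the evenness of $k$ is exactly what lets the two caps agree, matching the parity dictated by the forced-cycle analogue of Lemma~\ref{cycle with forced} (and paralleling the construction in Lemma~\ref{scfcF0(k)=2}). For each graph obtained by identifying the pendant edges of two copies from $\{\hat{K_4},\hat{D_3}\}$ into the single forced edge $uv$, I would colour each gadget internally so that the two edges meeting $uv$ share the colour opposite to $c(uv)$ and every internal shortest path is proper, then verify the finitely many vertex pairs directly.

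For necessity I would carry out a local-to-global structure analysis. Each non-forced edge lies in a cycle of length at most $4$, so $ua$ lies in a triangle or a $4$-cycle; since the only neighbours of $u$ are $v,a,b$ and $uv$ cannot lie in a short cycle, the only possibilities are (a) the triangle $uab$, or (b) a $4$-cycle $uapbu$ in which $a,b$ share a neighbour $p$. The same dichotomy holds at $v$. I would then propagate: in case (b) the vertex $p$ again has two non-forced edges lying in short cycles, and the constraint $c(ua)=c(ub)$ together with properness forces the local picture to extend as a chain of $4$-cycles, i.e.\ a ladder; in case (a) the triangle $uab$ behaves like a cap. Tracking how the two endpoint-caps are joined, and using the structure of Theorem~\ref{withoutforced} to control the forced-edge-free interior, one sees that a long ladder terminated by two triangular caps is exactly $F_0(k)$, while the configurations that close up immediately are precisely the gluings of two caps from $\{\hat{K_4},\hat{D_3}\}$.

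The hard part will be the necessity analysis. One must show that the propagation in case (b) cannot branch or fold back on itself, ruling out chords, repeated vertices, and odd closures via the even-forced-cycle lemma (which is also what yields the ``even $k\geq4$'' restriction); and one must verify that the handful of ways the two caps can meet is exhausted by the gadget types $\hat{K_4}$ and $\hat{D_3}$, with no further survivors. This bookkeeping---ensuring that every edge outside the caps genuinely sits in a $4$-cycle, and that no unintended forced edge or unique long shortest path is created that would push $\mathit{spc}(G)$ above $2$---is where the real work lies.
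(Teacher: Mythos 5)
First, note that the paper itself gives no proof of this statement: Theorem~\ref{oneforcededge} is imported verbatim from the manuscript of Huang and Yuan \cite{HY}, so there is no in-paper argument to compare yours against. Your preliminary observations are correct and are the right starting point: the $\mathit{spc}$-analogues of Lemmas~\ref{path with forced} and~\ref{cycle with forced} hold for exactly the reason you give (a forced $2$-path is the unique shortest path between its ends, hence must be proper), the four edges incident to the forced edge $uv$ must all receive the colour opposite to $c(uv)$, and the parity of the forced cycle $xs_1\cdots s_ky$ of length $k+2$ in $F_0(k)$ correctly accounts for the restriction to even $k\geq 4$ \emph{once} $G$ is already known to be an $F_0(k)$.

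The gap is that the necessity direction --- which is the entire content of a classification theorem --- is asserted rather than proved. Your reduction of the local picture at $u$ to ``triangle cap $uab$'' versus ``$4$-cycle $uapbu$'' is sound (any short cycle through $ua$ must return to $u$ via $b$, since $v$ is excluded by forcedness of $uv$), but the propagation step (``the constraint $c(ua)=c(ub)$ together with properness forces the local picture to extend as a chain of $4$-cycles'') is precisely the case analysis that constitutes the theorem, and you explicitly defer it. Two places where the deferred work is not routine: (i) you propose to ``use the structure of Theorem~\ref{withoutforced} to control the forced-edge-free interior'', but that theorem classifies cubic \emph{graphs} without forced edges, whereas the region between the two caps is a proper subgraph with degree-$2$ boundary vertices, so the theorem does not apply directly --- one needs the forced-branch machinery underlying Theorem~\ref{atleasttwoforced} or an independent argument; (ii) the exhaustiveness of $\{\hat{K_4},\hat{D_3}\}$ as the only admissible cap types, and the verification that no surviving configuration creates a second forced edge or a unique long shortest path pushing $\mathit{spc}(G)$ above $2$, is the substance of the result and cannot be dismissed as bookkeeping. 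As written, the proposal is a plausible plan with correct scaffolding, not a proof.
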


\begin{lem}\label{singleedge}
Let $G$ be a cubic graph. If $G$ is obtained from $H_1$ and $H_2$ by identifying the pendent edges to a single edge, where $H_i\in\{\hat{K_4},\hat{D_3}\}$ for $i=1,2$, then $\mathit{scfc}(G)$=2 if and only if $H_i=\hat{K_4}$ for $i=1,2$.
\end{lem}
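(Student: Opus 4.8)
The plan is to prove the two implications separately, using the single edge $e=uv$ (the unique forced edge of $G$) as the organizing feature: $u$ lies in the copy $H_1$ together with its two other neighbours, $v$ lies in $H_2$ together with its two other neighbours, and $e$ is a bridge of $G$. Since $G$ is a non-complete cubic graph we already have $\mathit{scfc}(G)\ge 2$, so in each direction it suffices to decide whether a strong conflict-free colouring with exactly $2$ colours exists. The guiding principle throughout is an elementary fact about $2$-colourings: a path on which every two adjacent edges receive distinct colours must be coloured alternately $1,2,1,2,\dots$, and such an alternating path is conflict-free if and only if its length is at most $3$; once the length is $\ge 4$ neither colour occurs exactly once.

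For sufficiency ($H_1=H_2=\hat{K_4}$) I would exhibit an explicit $2$-edge-colouring and verify it. The key structural observation is that inside a copy of $\hat{K_4}$ the vertices are mutually very close and every internal pair at distance $2$ is joined by several $2$-paths, so the only forced $2$-paths of $G$ are the four that run across the bridge (of the type $a_1uv$ and $uva_2$, where $a_1,a_2$ are the neighbours of $u,v$ inside the copies). Concatenating two of these yields forced $3$-paths such as $a_1uva_2$, but a direct check shows that $G$ has no forced path of length $4$ and, more strongly, no pair of vertices whose unique shortest path has length $\ge 4$ (every bridge-crossing path of length $\ge 4$ admits an alternative obtained by rerouting at the $a/b$-branch). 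One may therefore colour $e$ with colour $2$, the edges incident with $u$ and $v$ inside the copies with colour $1$, and the remaining edges so that each internal distance-$2$ pair receives a bicoloured $2$-path; the bridge-crossing forced $3$-paths then read $1,2,1$ and are conflict-free, while every pair at distance $4$ or $5$ has enough alternative shortest paths to select a conflict-free representative. This gives $\mathit{scfc}(G)=2$.

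For necessity I argue contrapositively. Suppose, without loss of generality, that $H_1\cong \hat{D_3}$, and fix any strong conflict-free colouring of $G$ with $2$ colours. The sparser gadget $\hat{D_3}$, together with the bridge $e$, yields a pair of vertices $x,y$ in $H_1\cup\{v\}$ whose unique shortest $x$--$y$ path $P$ is a forced path of length $\ge 4$; because $e$ is a bridge, no walk can cross between $H_1$ and $H_2$ twice, so the structure of $H_2$ cannot shorten $P$ and the obstruction is insensitive to the choice of $H_2$. By Lemma \ref{path with forced} every two adjacent edges of $P$ get distinct colours, so with two colours $P$ is alternating and, having length $\ge 4$, is not conflict-free; since $P$ is the only shortest $x$--$y$ path, this contradicts strong conflict-free connectedness. (Should one prefer a cycle obstruction, it suffices instead to locate an odd forced cycle inside $\hat{D_3}$, whence Lemma \ref{cycle with forced} gives the contradiction directly.) Hence $\mathit{scfc}(G)=2$ forces $H_1=H_2=\hat{K_4}$.

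The routine content is the colour-counting fact about alternating paths, and the main obstacle is the graph-specific bookkeeping read off from Figure \ref{H}: verifying that $\hat{K_4}+\hat{K_4}$ contains no forced path of length $4$ (so that the explicit colouring survives every pair), while any gluing involving $\hat{D_3}$ does contain such a forced path or an odd forced cycle. This is a finite inspection of the two small gadgets and their three relevant gluings, and it is precisely the place where $\mathit{scfc}$ separates from $\mathit{spc}$, since Theorem \ref{oneforcededge} already supplies $\mathit{spc}(G)=2$ in all cases.
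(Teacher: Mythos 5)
Your proposal is correct and follows essentially the same route as the paper: necessity via a forced path of length $4$ through the identified edge whenever a $\hat{D_3}$ is present (the paper exhibits $t_2t_1ys_1s_2$ and invokes Lemma \ref{path with forced}), and sufficiency via an explicit $2$-colouring of the $\hat{K_4}$--$\hat{K_4}$ gluing (the paper's Figure \ref{N}). The only slip is that this forced $4$-path must use one edge of $H_2$ beyond $v$, so its endpoints do not both lie in $H_1\cup\{v\}$ as you state; this is harmless since, as you note, the bridge makes the obstruction independent of the structure of $H_2$.
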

\begin{proof}
Suppose $\mathit{scfc}(G)$=2. Let $H_1=\hat{D_3}$ ($see \ Figure \ \ref{H}$). If $G$ is constructed by identifying the pendent edge of $H_1$ with $H_2\in\{\hat{K_4},\hat{D_3}\}$, then there is a forced 4-path $t_2t_1ys_1s_2$, a contradiction by Lemma \ref{path with forced}. Clearly, $\mathit{scfc}(G)$=2 when $H_1=H_2=\hat{K_4}$ under the edge-coloring in Figure \ref{N}.

\end{proof}
\begin{figure}[!htb]
\centering
\includegraphics[width=0.4\textwidth]{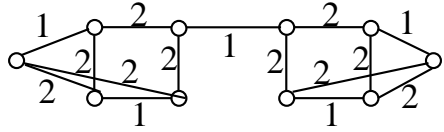}
\caption{The graph $N$ }\label{N}
\end{figure}

Combining Lemma \ref{scfcF0(k)=2}, Lemma \ref{singleedge}, Theorem \ref{scfc=spc=2} and Theorem \ref{oneforcededge}, we get the following result.
\begin{lem}\label{scfconeforced}
Let $G$ be a cubic graph with exactly one forced edge. Then $\mathit{scfc}(G)=2$ if and only if $G\cong F_0(k)$ for $k\in \{2,4\}$ or $G\cong N$.
\end{lem}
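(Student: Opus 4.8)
The plan is to prove both implications of Lemma \ref{scfconeforced} by assembling the four results cited immediately before it, exploiting the fact that the hypothesis $\mathit{scfc}(G)=2$ is strictly stronger than $\mathit{spc}(G)=2$, the latter being exactly what unlocks the known classification in Theorem \ref{oneforcededge}. For the sufficiency I would simply invoke the two computational lemmas: Lemma \ref{scfcF0(k)=2} gives $\mathit{scfc}(F_0(k))=2$ for $k\in\{2,4\}$, and Lemma \ref{singleedge}, applied with $H_1=H_2=\hat{K_4}$, gives $\mathit{scfc}(N)=2$. Thus every graph in the asserted list realizes $\mathit{scfc}=2$, and no fresh colorings need be constructed at this stage.

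For the necessity, assume $G$ is cubic with exactly one forced edge and $\mathit{scfc}(G)=2$. The decisive reduction is Theorem \ref{scfc=spc=2}: as long as $G\ncong U$, it yields $\mathit{spc}(G)=2$. Since $G$ has exactly one forced edge, Theorem \ref{oneforcededge} then confines $G$ to one of two families, namely $G\cong F_0(k)$ for some even $k\geq 4$, or $G$ obtained from two blocks $H_1,H_2\in\{\hat{K_4},\hat{D_3}\}$ by identifying their pendent edges into a single edge. I would then re-impose the stronger hypothesis $\mathit{scfc}(G)=2$ on each family to prune it. In the first family Lemma \ref{scfcF0(k)=2} forces $k\in\{2,4\}$, which together with $k$ even and $k\geq 4$ leaves only $k=4$. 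In the second family Lemma \ref{singleedge} forces $H_1=H_2=\hat{K_4}$, i.e. $G\cong N$. Hence $G\in\{F_0(4),N\}$, which lies inside the stated list $\{F_0(2),F_0(4),N\}$.

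The step I expect to be the main obstacle, and the only one that is not purely formal, is handling the exceptional graph $U$ that Theorem \ref{scfc=spc=2} explicitly excludes. Before applying that theorem I would verify directly that $U$ fails the hypothesis of the present lemma: by its construction in the proof of Theorem \ref{scfc=spc=2} it carries several triangles, so it cannot have exactly one forced edge, and therefore the case $G\cong U$ never arises and the passage from $\mathit{scfc}(G)=2$ to $\mathit{spc}(G)=2$ is always legitimate. Establishing this cleanly (by counting the edges of $U$ lying in cycles of length at most $4$) is the one genuinely graph-theoretic check rather than a citation.

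A final bookkeeping remark closes the gap between $\{F_0(4),N\}$ produced by the necessity argument and the list $\{F_0(2),F_0(4),N\}$ in the statement: the graph $F_0(2)$ is in fact the prism $C_3\Box K_2$ and so possesses no forced edge at all, placing it under Lemma \ref{scfcwithout} rather than here. It is retained in the statement only because Lemma \ref{scfcF0(k)=2} treats $k=2$ and $k=4$ uniformly; within the class of cubic graphs with exactly one forced edge the $F_0(2)$ clause is vacuous, and the genuine solutions are $F_0(4)$ and $N$. With $U$ disposed of and this redundancy noted, both directions close and the characterization follows.
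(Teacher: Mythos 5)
Your proposal is correct and takes essentially the same route as the paper, whose entire proof is the one-line statement ``Combining Lemma \ref{scfcF0(k)=2}, Lemma \ref{singleedge}, Theorem \ref{scfc=spc=2} and Theorem \ref{oneforcededge}, we get the following result''; your sufficiency/necessity split and the pruning of the two families from Theorem \ref{oneforcededge} down to $F_0(4)$ and $N$ is exactly that combination, spelled out. You are in fact more careful than the paper, which silently ignores both the exceptional graph $U$ excluded in Theorem \ref{scfc=spc=2} and the vacuity of the $F_0(2)$ clause under the exactly-one-forced-edge hypothesis (since $F_0(2)\cong C_3\Box K_2$ has no forced edge at all).
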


Before proceeding, we need one more definition.
\begin{df}
Let $G$ be a connected graph. The \emph{forced graph} of $G$ is obtained
from $G$ by replacing each forced edge $uv $ (if any) by two pendant edges $uu'$ and
$vv'$, where $u'$ and $v'$ are two new vertices with respect to the forced edge $uv$. Each
component of the forced graph of $G$ is called a \emph{forced branch} of $G$, and the new pendant
edge $uu'$ in the forced branch is called a \emph{forced link} of $G$. For each forced edge $uv$ of $G$,
we call $uu'$ and $vv'$ the \emph{twin links} corresponding to the forced edge $uv$. In the case that a forced link $uu'$
and its twin link $vv'$ are contained in a common forced branch of
$G$, we say that $uu'$ is a \emph{selfish link}.
\end{df}

\begin{thm}\label{atleasttwoforced}\upshape\cite{HY}
Let $G$ be a cubic graph containing at least two forced edges, and let $H_1,H_2,\cdots,H_r$ be the forced branches of $G$. Then $\mathit{spc}(G)=2$ if and only if $H_i\in \mathcal{H}$ for $i=1,2,\cdots,r$, and there are $2$-$SPC$ (strong proper connection number being $2$) patterns $p_1, p_2, \cdots, p_r$ of $H_1, H_2, \cdots, H_r$, respectively, such that each pair of twin links receive the same color.
\end{thm}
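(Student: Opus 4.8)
The plan is to prove both directions through the decomposition of $G$ into its forced branches $H_1,\dots,H_r$ at the forced edges, reducing the strong proper connectivity of $G$ to a \emph{local} condition on each branch together with a \emph{global} consistency condition across twin links. First I would set up the reduction. If $\mathit{spc}(G)=2$ via a coloring $c$, then for each forced edge $uv$ and each adjacent edge $vw$ the $2$-path $uvw$ is forced, so its two edges must receive distinct colors (the strong-proper analogue of Lemma \ref{path with forced}). Splitting every forced edge $uv$ into its twin links $uu'$ and $vv'$ and giving each link the color $c(uv)$ therefore induces, on each branch $H_i$, a strong proper connection coloring; this is the required $2$-SPC pattern $p_i$, and twin links automatically agree in color since they inherit the single color $c(uv)$. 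Conversely, given branches $H_i\in\mathcal{H}$ carrying $2$-SPC patterns $p_1,\dots,p_r$ that agree on every pair of twin links, I would re-identify the twin links back into forced edges to obtain a well-defined $2$-coloring $c$ of $G$ (well-defined precisely because paired links match), and then verify that $c$ strongly properly connects $G$: a shortest $u$-$v$ path either lies inside a single branch, where $p_i$ does the job, or it crosses forced edges, in which case it concatenates shortest properly-colored subpaths and the agreement of the twin-link colors guarantees properness at each junction while the forced edges keep the path shortest.

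The heart of the argument, and the step I expect to be the main obstacle, is the structural classification: showing that a forced branch of a cubic graph with $\mathit{spc}=2$ must be one of $F_0^*(k),\hat{K_4},\hat{D_3},\tilde{K_{3,3}},\tilde{Q_3},F_1(k)$. The governing constraint is that with only two colors every forced path must be properly colored and hence its edges must alternate, which forces every forced cycle to have even length; an odd forced cycle immediately requires a third color (the strong-proper analogue of Lemma \ref{cycle with forced}). I would then run a case analysis organized by the number and placement of the forced links in a branch, the lengths and parities of the shortest forced cycles, and the local adjacencies among the degree-$3$ vertices, using these parity and alternation constraints to bound the branch and isolate the finitely many exceptional graphs together with the two parametric families. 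For each surviving candidate I would exhibit an explicit $2$-coloring realizing a valid $2$-SPC pattern, and for each excluded configuration I would locate either an odd forced cycle or two forced paths whose required alternations conflict, yielding $\mathit{spc}\ge 3$.

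Finally, I would handle the global consistency, which is the most delicate bookkeeping. Because a single branch may admit several inequivalent $2$-SPC patterns, the twin-link identifications across the branches impose a system of equality constraints on the chosen patterns --- essentially a consistency/parity problem --- and the ``if'' direction requires showing that these constraints are simultaneously satisfiable exactly when every branch lies in $\mathcal{H}$. I would resolve this by cataloguing, for each member of $\mathcal{H}$, the possible colorings of its forced links over all valid $2$-SPC patterns, and checking that this catalogue is flexible enough that any color forced at one link can be matched at its twin, so that a globally consistent choice of $p_1,\dots,p_r$ always exists. Combined with the reduction and the classification above, this completes both directions.
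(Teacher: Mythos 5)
First, a point of order: the paper does not prove this theorem at all --- it is imported verbatim from Huang and Yuan \cite{HY} (a manuscript), so there is no in-paper proof to compare your proposal against. What follows is therefore an assessment of your proposal on its own terms.

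Your overall architecture (split $G$ at the forced edges into forced branches, show a $2$-coloring of $G$ restricts to a $2$-SPC pattern on each branch with twin links agreeing, and conversely glue compatible patterns back together) is the natural and almost certainly correct skeleton. But the proposal has a genuine gap where the theorem's actual content lives: the classification of the admissible forced branches as exactly $\{F_0^*(k),\hat{K_4},\hat{D_3},\tilde{K_{3,3}},\tilde{Q_3},F_1(k)\}$ is only announced (``I would then run a case analysis organized by \dots''), with no argument that bounds the branches or isolates these six families. The parity/alternation constraints you invoke rule out odd forced cycles, but they do not by themselves explain why, say, no other cubic-with-pendant-links configuration admits a $2$-SPC pattern; that is the entire difficulty of the theorem and it is left unexecuted. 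Two further points need repair. (i) Forced edges need not be bridges (an edge is forced merely if it lies on no cycle of length at most $4$; the paper's own definition of a \emph{selfish link} presupposes that both twin links can land in the same branch), so a shortest path in $G$ between two vertices of one branch may leave and re-enter that branch; your claim that restricting $c$ to $H_i$ ``therefore induces'' a $2$-SPC pattern of $H_i$, and the converse claim that gluing branch patterns yields shortest properly colored paths in $G$, both require an argument relating distances in $G$ to distances in the branches. (ii) Your final paragraph sets out to show that the twin-link compatibility constraints are \emph{always} simultaneously satisfiable once every branch lies in $\mathcal{H}$; if that were true, the second clause of the theorem (``and there are $2$-SPC patterns \dots such that each pair of twin links receive the same color'') would be redundant, which its explicit presence in the statement strongly suggests it is not. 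You should be proving that the compatibility condition is part of the characterization, not that it is automatic.
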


\begin{lem}\label{scfcatleasttwoforced}
Let $G$ be a cubic graph containing at least two forced edges, and let $H_1,H_2,\cdots,H_r \in \mathcal{H}$ be the forced branches of $G$. Then $\mathit{scfc}(G)$=2 if and only if $G\in\mathcal{L}$, demonstrated in Figure \ref{L}.
\end{lem}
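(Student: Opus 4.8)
The plan is to reduce this to the strong proper connection characterization already available from \cite{HY} and then sharpen it using the extra rigidity that the conflict-free condition imposes. First I would handle necessity. Assume $\mathit{scfc}(G)=2$. If $G\cong U$ I would treat it directly, since $U$ is a single explicit graph (one checks whether it even carries two forced edges); otherwise Theorem \ref{scfc=spc=2} applies and yields $\mathit{spc}(G)=2$. As $G$ has at least two forced edges, Theorem \ref{atleasttwoforced} then tells us that every forced branch $H_i$ lies in $\mathcal{H}$ and that the branches admit a compatible family of $2$-$SPC$ patterns whose twin links agree in color. This pins down the coarse shape of $G$ as an assembly of branches from $\mathcal{H}$ glued along their forced edges, so that the remaining task is to decide \emph{which} of these assemblies actually achieve $\mathit{scfc}(G)=2$ rather than merely $\mathit{spc}(G)=2$.

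The crucial additional input separating $\mathit{scfc}=2$ from $\mathit{spc}=2$ is an elementary observation I would isolate first: a path that is properly $2$-colored is conflict-free if and only if its length is at most $3$, because for length $\geq 4$ each of the two colors appears at least twice. By Lemma \ref{path with forced} every forced path is properly $2$-colored under any strong conflict-free $2$-coloring, and a forced path is the unique shortest path joining its endpoints; hence $\mathit{scfc}(G)=2$ is incompatible with the existence of any forced path of length $\geq 4$. (By Lemma \ref{cycle with forced} the forced cycles are moreover even, which is a further constraint.) This is precisely the mechanism already seen in Lemma \ref{singleedge}, where the forced $4$-path $t_2t_1ys_1s_2$ is fatal, and in the bound $k\in\{2,4\}$ of Lemma \ref{scfcF0(k)=2}; it is the tool I would use throughout the pruning.

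Next I would apply the no-long-forced-path condition to the data produced above. Inside a single branch, the parametrized members of $\mathcal{H}$ (the families $F_0^*(k)$ and $F_1(k)$) contain forced subpaths whose length grows with $k$, so forbidding a forced path of length $4$ confines $k$ to a short finite range. Across a twin-link junction the forced edge concatenates the forced paths of the two incident branches, so for each admissible pair of branches and each choice of compatible pattern I would check whether the concatenation manufactures a forced path of length $\geq 4$; such combinations are discarded. After eliminating every assembly that produces a long forced path, and verifying for the survivors that the remaining vertex pairs --- those joined by \emph{several} shortest paths, where the scfc condition has slack --- still possess a conflict-free shortest path, exactly the configurations listed in $\mathcal{L}$ (Figure \ref{L}) should remain. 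This enumeration over branches and patterns is where the real work lies and is the main obstacle: it is finite but delicate, since one must track forced $2$-paths both inside branches and across junctions while respecting the twin-link matching of Theorem \ref{atleasttwoforced}.

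For sufficiency I would, for each $G\in\mathcal{L}$, exhibit an explicit strong conflict-free $2$-coloring in the style of those used in Lemma \ref{scfcF0(k)=2} and for the graph $N$ in Figure \ref{N}: assign the two colors so that the twin-link patterns agree and so that every pair of vertices admits a shortest path on which some color occurs exactly once, then confirm this finite check class by class. Since each $G\in\mathcal{L}$ is noncomplete we have $\mathit{scfc}(G)\geq 2$, so producing such a coloring gives $\mathit{scfc}(G)=2$ and closes the equivalence.
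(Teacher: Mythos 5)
Your proposal follows essentially the same route as the paper: reduce via Theorem \ref{scfc=spc=2} and Theorem \ref{atleasttwoforced} to assemblies of forced branches from $\mathcal{H}$, then discard any combination whose glued forced edges create a forced path of length at least $4$ (the paper eliminates combinations in exactly this way, e.g.\ the forced $5$-path arising when branches of type $\hat{K_4}$ or $\hat{D_3}$ are attached to $\tilde{K_{3,3}}$ or $\tilde{Q_3}$, and the forced $4$-path inside the forced $C_8$ produced by two copies of $\tilde{Q_3}$), finishing with explicit $2$-colorings of the surviving graphs $L_1,\dots,L_4$. The only substantive difference is that you state the finite branch-by-branch enumeration as a remaining task rather than carrying it out, which is where the paper's actual case analysis lives.
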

\begin{figure}[!htb]
\centering
\includegraphics[width=0.6\textwidth]{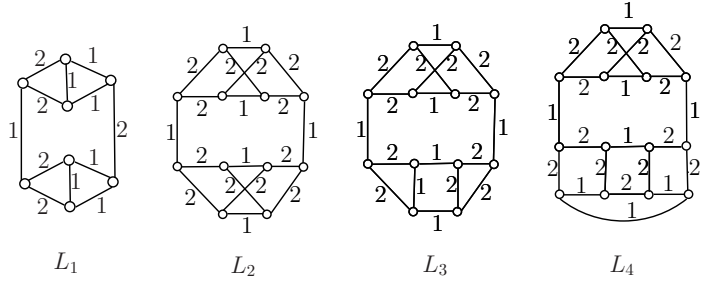}
\caption{The graph class : $\mathcal{L}$ }\label{L}
\end{figure}
\begin{proof}
Let $G^*$ be the \emph{forced graph} of $G$. According to the choices of $H$, we distinguish the following cases.

Suppose that $H_i\in\{\hat{K_4},\hat{D_3},\tilde{K_{3,3}},\tilde{Q_3}\}$. Clearly, there is no selfish link for $H_i$. Otherwise, there is no  forced edge in $G$. Suppose that $G^*$ is constructed by $H_1,H_2\in\{\hat{K_4},\hat{D_3}\}$. Then there is only one forced edge, a contradiction. Suppose that $G^*$ is constructed by $H_1\in\{\tilde{K_{3,3}},\tilde{Q_3}\}$ and $H_2,H_3\in\{\hat{K_4},\hat{D_3}\}$. Then there is a forced 5-path between the two forced edges, a contradiction. Suppose that $G^*$ is constructed by $H_1,H_2\cong \tilde{Q_3}$. Clearly, the two forced edges are contained in a forced cycle $C_8$ in $G$, which induces a forced 4-path, a contradiction. If $G^*$ is constructed by $H_1,H_2\cong \tilde{K_{3,3}}$ or $H_1\cong \tilde{K_{3,3}}, H_2\cong \tilde{Q_3}$, then $G\cong L_2$ or $L_4$ with $\mathit{scfc}(G)=2$ by the edge-coloring in Figure \ref{L}. Suppose that $H_i\in\{F_0^*(k),F_1(k)\}$. If $G^*\cong F_0^*(k)$, then there is at most one forced edge in $G$, a contradiction. If $G$ is constructed by identifying the pendent edges of $H_i\in\{F_0^*(k),F_1(k)\}$ to a single edge, then we can check that $G\cong L_1$. In the remaining case, $G\cong L_3$.
\end{proof}

Finally, Combining Theorem \ref{scfc=spc=2}, Lemma \ref{scfcatleasttwoforced} and Theorem \ref{atleasttwoforced}, we have our main theorem
of this section.
\begin{thm}
Let $G$ be a cubic graph. Then $\mathit{scfc}(G)=2$ if and only if
  \begin{center}
  $G\in\{L_1,L_2,L_3, L_4,N,C_{l}\Box K_2, M_{2r},F_0(k)\}$,
  \end{center}
  where $l\in\{3,4,6\}$, $3\leq r\leq7$ and $k\in\{2,4\}$.
\end{thm}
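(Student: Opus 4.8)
The plan is to obtain both directions of the equivalence by stitching together the structural lemmas developed in this section, organizing everything around the number of forced edges of the cubic graph $G$.

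The sufficiency requires no new work: every graph in the list has already been certified. Indeed, $\mathit{scfc}(C_l\Box K_2)=2$ for $l\in\{3,4,6\}$ by Lemma \ref{scfc(C_kbox K_2}, $\mathit{scfc}(M_{2r})=2$ for $3\le r\le 7$ by Lemma \ref{scfc(M{2k})=2}, $\mathit{scfc}(F_0(k))=2$ for $k\in\{2,4\}$ by Lemma \ref{scfcF0(k)=2}, $\mathit{scfc}(N)=2$ by Lemma \ref{singleedge}, and $\mathit{scfc}(L_i)=2$ for $i\in\{1,2,3,4\}$ by Lemma \ref{scfcatleasttwoforced}. So I would simply invoke these and collect the "if" direction.

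For the necessity I would assume $\mathit{scfc}(G)=2$ and first dispose of the two cubic graphs to which the reduction of Theorem \ref{scfc=spc=2} does not directly apply. Since $K_4$ is complete, $\mathit{scfc}(K_4)=1$ by Theorem \ref{scfc=1}, so $G\ncong K_4$. To exclude the exceptional graph $U$ of Figure \ref{U}, I would argue directly that $\mathit{scfc}(U)\ge 3$, by exhibiting a pair of vertices of $U$ whose only shortest connection is a forced path of length at least $3$ (or a forced cycle), so that Lemma \ref{path with forced} (respectively Lemma \ref{cycle with forced}) forces a third color. With $K_4$ and $U$ excluded, Theorem \ref{scfc=spc=2} yields $\mathit{spc}(G)=2$, and I would then split the analysis by the number of forced edges: the no-forced-edge case is handled by Theorem \ref{withoutforced} refined through Lemma \ref{scfcwithout}, the single-forced-edge case by Theorem \ref{oneforcededge} refined through Lemma \ref{scfconeforced}, and the case of at least two forced edges by Theorem \ref{atleasttwoforced} refined through Lemma \ref{scfcatleasttwoforced}. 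As these three cases partition all cubic graphs, together they place $G$ in the stated list.

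The hard part will be the exclusion of $U$: Theorem \ref{scfc=spc=2} only yields the implication $\mathit{scfc}(G)=2\Rightarrow\mathit{spc}(G)=2$ under the hypothesis $G\ncong U$, so the argument can be closed only after separately confirming that $U$ is not strongly conflict-free connected with two colors. Everything else is bookkeeping, though one should double-check that the refinements in Lemmas \ref{scfcwithout}, \ref{scfconeforced} and \ref{scfcatleasttwoforced} collectively reproduce the list exactly, with the trivial graph $K_4$ correctly excluded.
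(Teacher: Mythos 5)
Your proposal follows the paper's proof exactly: sufficiency by collecting the certifying lemmas, and necessity by passing to $\mathit{spc}(G)=2$ via Theorem \ref{scfc=spc=2} and then splitting on the number of forced edges, using Theorem \ref{withoutforced}, Theorem \ref{oneforcededge} and Theorem \ref{atleasttwoforced} refined through Lemmas \ref{scfcwithout}, \ref{scfconeforced} and \ref{scfcatleasttwoforced}. Your explicit handling of $K_4$ and of the exceptional graph $U$ is in fact more careful than the paper's one-line combination, which silently skips the case $G\cong U$ where the reduction to $\mathit{spc}$ is unavailable and a direct check that $\mathit{scfc}(U)\geq 3$ is genuinely needed.
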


\end{document}